\begin{document}
 
\newtheorem{theorem}{Theorem}
\newtheorem{corollary}[theorem]{Corollary}
\newtheorem{prop}[theorem]{Proposition} 
\newtheorem{problem}[theorem]{Problem}
\newtheorem{lemma}[theorem]{Lemma} 
\newtheorem{remark}[theorem]{Remark}
\newtheorem{observation}[theorem]{Observation}
\newtheorem{defin}{Definition} 
\newtheorem{example}[theorem]{Example}
\newtheorem{conj}{Conjecture} 
\newcommand{\PR}{\noindent {\bf Proof:\ }} 
\def\EPR{\hfill $\Box$\linebreak\vskip.5mm} 
 
\def\Pol{{\sf Pol}} 
\def\mPol{{\sf MPol}} 
\def\Polo{{\sf Pol}_1} 
\def\PPol{{\sf pPol\;}} 
\def\Inv{{\sf Inv}}
\def\mInv{{\sf MInv}} 
\def\Clo{{\sf Clo}\;} 
\def\Con{{\sf Con}} 
\def\concom{{\sf Concom}\;} 
\def\End{{\sf End}\;}
\def\Sub{{\sf Sub}\;} 
\def\Im{{\sf Im}} 
\def\Ker{{\sf Ker}\;} 
\def\Bl{{\sf Block}}
\def\H{{\sf H}}
\def\S{{\sf S}} 
\def\D{{\sf P}} 
\def\I{{\sf I}} 
\def\Var{{\sf var}} 
\def\PVar{{\sf pvar}} 
\def\fin#1{{#1}_{\rm fin}}
\def\P{{\sf P}} 
\def\Pfin{{\sf P_{\rm fin}}} 
\def\Id{{\sf Id}}
\def\R{{\rm R}} 
\def\F{{\rm F}} 
\def\Term{{\sf Term}}
\def\var#1{{\sf var}(#1)} 
\def\Sg#1{{\sf Sg}(#1)} 
\def\Sgg#1#2{{\sf Sg}_{#1}(#2)} 
\def\Cg#1{{\sf Cg}(#1)}
\def\Cgg#1#2{{\sf Cg}_{#1}(#2)} 
\def\Cen{{\sf Cen}}
\def\tol{{\sf tol}} 
\def\lnk{{\sf lk}} 
\def\rbcomp#1{{\sf rbcomp}(#1)}
  
\let\cd=\cdot 
\let\eq=\equiv 
\let\op=\oplus 
\let\omn=\ominus
\let\meet=\wedge 
\let\join=\vee 
\let\tm=\times
\def\ldiv{\mathbin{\backslash}} 
\def\rdiv{\mathbin/}
  
\def\typ{{\sf typ}} 
\def\zz{{\un 0}} 
\def\zo{{\un 1}}
\def\one{{\bf1}} 
\def\two{{\bf2}} 
\def\three{{\bf3}}
\def\four{{\bf4}} 
\def\five{{\bf5}}
\def\pq#1{(\zz_{#1},\mu_{#1})}
  
\let\wh=\widehat 
\def\ox{\ov x} 
\def\oy{\ov y} 
\def\oz{\ov z}
\def\of{\ov f} 
\def\oa{\ov a} 
\def\ob{\ov b} 
\def\oc{\ov c}
\def\od{\ov d} 
\def\oob{\ov{\ov b}} 
\def\rx{{\rm x}}
\def\rf{{\rm f}} 
\def\rrm{{\rm m}} 
\let\un=\underline
\let\ov=\overline 
\let\cc=\circ 
\let\rb=\diamond 
\def\ta{{\tilde a}} 
\def\tz{{\tilde z}}
\let\td=\tilde
\let\dg=\dagger
\let\ddg=\ddagger
  
  
\def\zZ{{\mathbb Z}} 
\def\B{{\mathcal B}} 
\def\P{{\mathcal P}}
\def\zL{{\mathbb L}} 
\def\zD{{\mathbb D}}
 \def\zE{{\mathbb E}}
\def\zG{{\mathbb G}} 
\def\zA{{\mathbb A}} 
\def\zB{{\mathbb B}}
\def\zC{{\mathbb C}} 
\def\zM{{\mathbb M}} 
\def\zR{{\mathbb R}}
\def\zS{{\mathbb S}} 
\def\zT{{\mathbb T}} 
\def\zN{{\mathbb N}}
\def\zQ{{\mathbb Q}} 
\def\zW{{\mathbb W}} 
\def\bK{{\bf K}}
\def\C{{\bf C}} 
\def\M{{\bf M}} 
\def\E{{\bf E}} 
\def\N{{\bf N}}
\def\O{{\bf O}} 
\def\bN{{\bf N}} 
\def\bX{{\bf X}} 
\def\GF{{\rm GF}} 
\def\cC{{\mathcal C}} 
\def\cA{{\mathcal A}}
\def\cB{{\mathcal B}} 
\def\cD{{\mathcal D}} 
\def\cE{{\mathcal E}} 
\def\cF{{\mathcal F}} 
\def\cG{{\mathcal G}} 
\def\cH{{\mathcal H}}
\def\cI{{\mathcal I}} 
\def\cK{{\mathcal K}} 
\def\cL{{\mathcal L}} 
\def\cP{{\mathcal P}} 
\def\cR{{\mathcal R}} 
\def\cRY{{\mathcal RY}}
\def\cS{{\mathcal S}} 
\def\cT{{\mathcal T}} 
\def\cU{{\mathcal U}} 
\def\cV{{\mathcal V}} 
\def\cW{{\mathcal W}} 
\def\oB{{\ov B}}
\def\oC{{\ov C}} 
\def\ooB{{\ov{\ov B}}} 
\def\ozB{{\ov{\zB}}}
\def\ozD{{\ov{\zD}}} 
\def\ozG{{\ov{\zG}}}
\def\tcA{{\widetilde\cA}} 
\def\tcC{{\widetilde\cC}}
\def\tcF{{\widetilde\cF}} 
\def\tcI{{\widetilde\cI}}
\def\tB{{\widetilde B}} 
\def\tC{{\widetilde C}}
\def\tD{{\widetilde D}} 
\def\ttB{{\widetilde{\widetilde B}}}
\def\ttC{{\widetilde{\widetilde C}}}
\def\tba{{\tilde\ba}} 
\def\ttba{{\tilde{\tilde\ba}}}
\def\tbb{{\tilde\bb}} 
\def\ttbb{{\tilde{\tilde\bb}}}
\def\tbc{{\tilde\bc}} 
\def\tbd{{\tilde\bd}}
\def\tbe{{\tilde\be}} 
\def\tbt{{\tilde\bt}}
\def\tbu{{\tilde\bu}} 
\def\tbv{{\tilde\bv}}
\def\tbw{{\tilde\bw}} 
\def\tdl{{\tilde\dl}} 
\def\ocP{{\ov\cP}}
\def\tzA{{\widetilde\zA}} 
\def\tzC{{\widetilde\zC}}
\def\new{{\mbox{\footnotesize new}}}
\def\old{{\mbox{\footnotesize old}}}
\def\prev{{\mbox{\footnotesize prev}}}
\def\oo{{\mbox{\sf\footnotesize o}}}
\def\pp{{\mbox{\sf\footnotesize p}}}
\def\nn{{\mbox{\sf\footnotesize n}}} 
\def\oR{{\ov R}}
  
  
\def\gA{{\mathfrak A}} 
\def\gV{{\mathfrak V}} 
\def\gS{{\mathfrak S}} 
\def\gK{{\mathfrak K}} 
\def\gH{{\mathfrak H}}
  
\def\ba{{\bf a}} 
\def\bb{{\bf b}} 
\def\bc{{\bf c}} 
\def\bd{{\bf d}} 
\def\be{{\bf e}} 
\def\bbf{{\bf f}} 
\def\bg{{\bf g}}
\def\bh{{\bf h}}
\def\bi{{\bf i}} 
\def\bm{{\bf m}} 
\def\bo{{\bf o}} 
\def\bp{{\bf p}} 
\def\bs{{\bf s}} 
\def\bu{{\bf u}} 
\def\bt{{\bf t}} 
\def\bv{{\bf v}} 
\def\bx{{\bf x}}
\def\by{{\bf y}} 
\def\bw{{\bf w}} 
\def\bz{{\bf z}}
\def\ga{{\mathfrak a}} 
\def\oal{{\ov\al}} 
\def\obeta{{\ov\beta}}
\def\ogm{{\ov\gm}} 
\def\oep{{\ov\varepsilon}}
\def\oeta{{\ov\eta}} 
\def\oth{{\ov\th}} 
\def\ovm{{\ov\mu}}
\def\ozero{{\ov0}}
\def\bB{{\bf B}} 
\def\bA{{\bf A}}

  
\def\CCSP{\hbox{\rm c-CSP}} 
\def\CSP{{\rm CSP}} 
\def\NCSP{{\rm \#CSP}} 
\def\mCSP{{\rm MCSP}} 
\def\FP{{\rm FP}} 
\def\PTIME{{\bf PTIME}} 
\def\GS{\hbox{($*$)}} 
\def\ry{\hbox{\rm r+y}}
\def\rb{\hbox{\rm r+b}} 
\def\Gr#1{{\mathrm{Gr}(#1)}}
\def\Grp#1{{\mathrm{Gr'}(#1)}} 
\def\Grpr#1{{\mathrm{Gr''}(#1)}}
\def\Scc#1{{\mathrm{Scc}(#1)}} 
\def\rel{R} 
\def\relo{Q}
\def\rela{S} 
\def\reli{T} 
\def\relp{P} 
\def\relov{R'}
\def\dep{\mathsf{dep}}
\def\Filt{\mathrm{Ft}}
\def\Filts{\mathrm{Fts}} 
\def\Agr{$\mathbb{A}$}
\def\Al{\mathrm{Alg}}
\def\Sig{\mathrm{Sig}}
\def\strat{\mathsf{strat}}
\def\relmax{\mathsf{relmax}}
\def\srelmax{\mathsf{srelmax}}
\def\Meet{\mathsf{Meet}}
\def\mmax{\mathsf{max}}
\def\amax{\mathsf{amax}}
\def\umax{\mathsf{umax}}
\def\emin{\mathsf{Z}}
\def\as{\mathsf{as}}
\def\asm{\mathsf{asm}}
\def\se#1{\mathsf{s}(#1)}
\def\see#1#2{\mathsf{s}_{#1}(#2)}
\def\star{\hbox{$(*)$}}
\def\bmal{{\mathbf m}}
\def\Af{\mathsf{Af}}
\let\sqq=\sqsubseteq
\def\maj{\mathsf{maj}}
\def\razm{\mathsf{size}}
\def\Razm{\mathsf{MAX}}
\def\Centr{\mathsf{Center}}
\def\centr{\mathsf{center}}

  
\let\sse=\subseteq 
\def\ang#1{\langle #1 \rangle}
\def\angg#1{\left\langle #1 \right\rangle}
\def\dang#1{\ang{\ang{#1}}} 
\def\vc#1#2{#1 _1\zd #1 _{#2}}
\def\tms{\tm\dots\tm}
\def\zd{,\ldots,} 
\let\bks=\backslash 
\def\red#1{\vrule height7pt depth3pt width.4pt
\lower3pt\hbox{$\scriptstyle #1$}}
\def\fac#1{/\lower2pt\hbox{$\scriptstyle #1$}}
\def\me{\stackrel{\mu}{\eq}} 
\def\nme{\stackrel{\mu}{\not\eq}}
\def\eqc#1{\stackrel{#1}{\eq}} 
\def\cl#1#2{\arraycolsep0pt
\left(\begin{array}{c} #1\\ #2 \end{array}\right)}
\def\cll#1#2#3{\arraycolsep0pt \left(\begin{array}{c} #1\\ #2\\
#3 \end{array}\right)} 
\def\clll#1#2#3#4{\arraycolsep0pt
\left(\begin{array}{c} #1\\ #2\\ #3\\ #4 \end{array}\right)}
\def\cllll#1#2#3#4#5#6{ \left(\begin{array}{c} #1\\ #2\\ #3\\
#4\\ #5\\ #6 \end{array}\right)} 
\def\pr{{\rm pr}}
\let\upr=\uparrow 
\def\ua#1{\hskip-1.7mm\uparrow^{#1}}
\def\sua#1{\hskip-0.2mm\scriptsize\uparrow^{#1}} 
\def\lcm{{\rm lcm}} 
\def\perm#1#2#3{\left(\begin{array}{ccc} 1&2&3\\ #1&#2&#3
\end{array}\right)} 
\def\w{$\wedge$} 
\let\ex=\exists
\def\NS{{\sc (No-G-Set)}} 
\def\lev{{\sf lev}}
\let\rle=\sqsubseteq 
\def\ryle{\le_{ry}} 
\def\ryprec{\le_{ry}}
\def\os{\mbox{[}} 
\def\zs{\mbox{]}}
\def\link{{\sf link}}
\def\solv{\stackrel{s}{\sim}} 
\def\mal{\mathbf{m}}
\def\precs{\prec_{as}}

  
\def\lb{$\linebreak$}  
  
\def\ar{\hbox{ar}} 
\def\Im{{\sf Im}\;} 
\def\deg{{\sf deg}}
\def\id{{\rm id}}
  
\let\al=\alpha 
\let\gm=\gamma 
\let\dl=\delta 
\let\ve=\varepsilon
\let\ld=\lambda 
\let\om=\omega 
\let\vf=\varphi 
\let\vr=\varrho
\let\th=\theta 
\let\sg=\sigma 
\let\Gm=\Gamma 
\let\Dl=\Delta
\let\kp=\kappa
  
  
\font\tengoth=eufm10 scaled 1200 
\font\sixgoth=eufm6
\def\goth{\fam12} 
\textfont12=\tengoth 
\scriptfont12=\sixgoth
\scriptscriptfont12=\sixgoth 
\font\tenbur=msbm10
\font\eightbur=msbm8 
\def\bur{\fam13} 
\textfont11=\tenbur
\scriptfont11=\eightbur 
\scriptscriptfont11=\eightbur
\font\twelvebur=msbm10 scaled 1200 
\textfont13=\twelvebur
\scriptfont13=\tenbur 
\scriptscriptfont13=\eightbur
\mathchardef\nat="0B4E 
\mathchardef\eps="0D3F

\title{Separation of congruence intervals and implications}
\author{Andrei A.\ Bulatov\\ 
} 
\date{} 
\maketitle

\begin{abstract}
The Constraint Satisfaction Problem (CSP) has been intensively 
studied in many areas of computer science and mathematics. 
The approach to the CSP based on tools from universal algebra 
turned out to be the most successful one to study the complexity
and algorithms for this problem. Several techniques have been 
developed over two decades. One of them is through associating
edge-colored graphs with algebras and studying how the properties
of algebras are related with the structure of the associated graphs.
This approach has been introduced in our previous two papers
(A.Bulatov, Local structure of idempotent algebras I,II. CoRR 
abs/2006.09599, CoRR abs/2006.10239, 2020). 
In this paper we further advance it by introducing new 
structural properties of finite idempotent algebras omitting type
\one\ such as separation congruences, collapsing polynomials,
and their implications for the structure of subdirect products of
finite algebras. This paper also provides the algebraic 
background for our proof of Feder-Vardi Dichotomy Conjecture
(A. Bulatov, A Dichotomy Theorem for Nonuniform CSPs. 
FOCS 2017: 319-330).
\end{abstract}

\section{Introduction}

Over the last two decades methods from universal algebra found 
strong applications in computer science, specifically in the study of
the Constraint Satisfaction Problem (CSP) and related combinatorial
problems. The original research problem where the algebraic
approach was used is the complexity of so-called nonuniform CSPs,
and more specifically the Dichotomy Conjecture posed by Feder
and Vardi in \cite{Feder93:monotone,Feder98:monotone} and 
refined in \cite{Bulatov05:classifying}. The Dichotomy Conjecture 
states that every nonuniform CSP is either solvable in polynomial
time or is NP-complete, and also delineates the precise borderline
between the two cases. Every nonuniform CSP can be associated with 
a finite algebra, and the complexity of the CSP is completely
determined by this algebra \cite{Jeavons97:closure,Bulatov05:classifying}. 
The Dichotomy Conjecture was confirmed independently by the author 
\cite{Bulatov17:dichotomy,Bulatov17:dichotomy-corr} and by Zhuk
\cite{Zhuk17:proof,Zhuk17:proof-corr}, and the algebraic approach 
played a key role in both proofs.  

The specific version of the algebraic approach used in 
\cite{Bulatov17:dichotomy,Bulatov17:dichotomy-corr} was developed 
in \cite{Bulatov04:graph,Bulatov08:recent,Bulatov16:connectivity,%
Bulatov16:restricted,Bulatov19:semilattice,Bulatov20:maximal}.
In this paper we further advance this approach preparing the 
ground for a proof of the Dichotomy Conjecture. We will introduce 
two structural features of finite algebras and demonstrate how 
they influence the structure of subdirect products of finite 
idempotent algebras omitting type \one. 

First we introduce the notion of separability of prime intervals
in the congruence lattice by a unary polynomial. More precisely, 
we say that a prime interval $\al\prec\beta$
in the congruence lattice of an algebra $\zA$ can be separated from 
interval $\gm\prec\dl$ if there is a unary polynomial $f$ of $\zA$ such 
that $f(\dl)\sse\gm$, but $f(\beta)\not\sse\al$. This concept
can be extended to subdirect products of algebras, say, 
$\rel\sse\zA\tm\zB$, in which case intervals $\al\prec\beta$ and 
$\gm\prec\dl$ may be in the congruence lattices of different factors,
say $\al,\beta\in\Con(\zA_1),\gm,\dl\in\Con(\zA_2)$, and $f$ is 
a polynomial of $\rel$. The relation `cannot be separated from' 
on the set of prime intervals is clearly reflexive and transitive.
Our first result, Theorem~\ref{the:relative-symmetry}
shows that it is also to some extent symmetric. 

The property proved in Theorem~\ref{the:relative-symmetry} is used 
to prove
the existence of the second structural feature of subdirect products,
collapsing polynomials, see Theorem~\ref{the:collapsing}. A unary 
polynomial of a subdirect product $\rel\sse\zA_1\tms\zA_n$ for a 
prime interval $\al\prec\beta$ in $\Con(\zA_i)$ for some $i$ is 
collapsing if for any $j$ and any prime interval $\gm\prec\dl$ in
$\Con(\zA_j)$ it holds that $f(\dl)\not\sse\gm$ if and only if $\al\prec\beta$ 
cannot be separated from $\gm\prec\dl$. Collapsing polynomials
are one of the main tools in the proof of the Dichotomy Conjecture
\cite{Bulatov17:dichotomy,Bulatov17:dichotomy-corr}, as they are 
very useful in the study of the structure of subdirect products. 
One example of such results is the Congruence 
Lemma~\ref{lem:affine-link}, which provides much information 
about the fine structure of a subdirect product of algebras when 
one of its factors is restricted on its congruence block. The 
Congruence Lemma is another important tool in the proof of 
the Dichotomy Conjecture. 

Besides congruence separation and collapsing polynomials we 
also introduce an alternative definition of the centralizer and
use it to derive certain properties of subdirect products. In addition, we
introduce two more technical properties of subdirect products, 
chaining and polynomial closure, and study their properties that
again are instrumental in the proof of the Dichotomy Conjecture.

\section{Preliminaries}\label{sec:preliminaries}

Here we introduce all the notation and terminology used in this paper.
It mainly follows the standard books \cite{Burris81:universal,Mckenzie87:algebras}.

\subsection{Notation and agreements}

By $[n]$ we denote the set $\{1\zd n\}$. For sets $\vc An$ tuples 
from $A_1\tms A_n$ are denoted in boldface, say, $\ba$; the $i$th component of 
$\ba$ is referred to as $\ba[i]$. An $n$-ary relation 
$\rel$ over sets $\vc An$ is any subset of $A_1\tms A_n$. For 
$I=\{\vc ik\}\sse[n]$ by $\pr_I\ba,\pr_I\rel$ we denote the \emph{projections} 
$\pr_I\ba=(\ba[i_1]\zd\ba[i_k])$, 
$\pr_I\rel=\{\pr_I\ba\mid\ba\in\rel\}$ of tuple
$\ba$ and relation $\rel$. If $\pr_i\rel=A_i$ for each $i\in[n]$, relation $\rel$ is 
said to be a \emph{subdirect product} of 
$A_1\tms A_n$. It will be convenient to use $\ov A$ for 
$A_1\tms A_n$ if the sets $A_i$ are clear from the context. For $I\sse[n]$ 
we will use $\ov A_I$, for $\prod_{i\in I}A_i$, or if $I$ 
is clear from the context just $\ov A$.

Algebras will be denoted by $\zA,\zB$ etc.; we often do not distinguish between 
subuniverses and subalgebras. For $B\sse\zA$ the subalgebra generated by
$B$ is denoted $\Sgg\zA B$ or just $\Sg B$. For $C\sse\zA^2$ the congruence 
generated by $C$ is denoted $\Cgg\zA C$ or just $\Cg C$. 
The equality relation and 
the full congruence of algebra $\zA$ are denoted $\zz_\zA$ and 
$\zo_\zA$, respectively. Often when we need to 
use one of these trivial congruences of an algebra indexed in some way, say, 
$\zA_i$, we write $\zz_i,\zo_i$ for $\zz_{\zA_i}, \zo_{\zA_i}$.
The set of all polynomials (unary, binary polynomials) of $\zA$ 
is denoted by $\Pol(\zA)$ and $\Polo(\zA)$, $\Pol_2(\zA)$, respectively. 
A unary polynomial $f$ is \emph{idempotent} if $f\circ f=f$. We 
frequently use operations on subalgebras of direct products of algebras, say, 
$\rel\sse\zA_1\tms \zA_n$. If $f$ is such an operation (say, $k$-ary) then we 
denote its component-wise action 
also by $f$, e.g.\ $f(\vc ak)$ for $\vc ak\in\zA_i$. In the same way we denote the
action of $f$ on projections of $\rel$, e.g.\ $f(\vc\ba k)$ for $I\sse[n]$ and 
$\vc\ba k\in\pr_I\rel$. What we mean will always be clear from the context.
We use similar agreements for collections of congruences. If $\al_i\in\Con(\zA_i)$, $i\in[n]$,
then $\ov\al$ denotes the congruence $\al_1\tms\al_n$ of 
$\rel$. If $I\sse[n]$ we use $\ov\al_I$ to denote 
$\prod_{i\in I}\al_i$. If it does not lead to a confusion we write 
$\ov\al$ for $\ov\al_I$. Sometimes $\al_i$ are specified for $i$ from a certain 
set $I\sse[n]$, then by $\ov\al$ we mean the congruence $\prod_{i\in[n]}\al'_i$
where $\al'_i=\al_i$ if $i\in I$ and $\al'_i$ is the equality relation otherwise.
For example, if $\al\in\Con(\zA_1)$ then $\rel\fac{\ov\al}$ means the factor of $\rel$
modulo $\al\tm\zz_2\tm\dots\tm\zz_n$. For $\al\in\Con(\zA)$ and a polynomial $f$ of $\zA$, we will often abuse notation and denote the action of $f$ on $\zA\fac\al$ by the same symbol $f$. For $\al,\beta\in\Con(\zA)$ we write
$\al\prec\beta$ if $\al<\beta$ and $\al\le\gm\le\beta$ in $\Con(\zA)$ implies 
$\gm=\al$ or $\gm=\beta$.
In this paper all algebras are finite, idempotent and omit type \one, except the definition of edges and Theorem~\ref{the:connectedness} in the beginning of Section~\ref{sec:colored-graphs}.

\subsection{Minimal sets and polynomials}

We will use the following basic facts from the tame congruence theory
\cite{Hobby88:structure}, often without further notice. 

Let $\zA$ be a finite algebra and $\al,\beta\in\Con(\zA)$ with $\al\prec\beta$. 
An \emph{$(\al,\beta)$-minimal set} is a set minimal
with respect to inclusion among the sets of the form $f(\zA)$, where 
$f\in\Polo(\zA)$ is such that $f(\beta)\not\sse\al$. Sets $B,C\sse\zA$ are said
to be \emph{polynomially isomorphic}
in $\zA$ if there are $f,g\in\Polo(\zA)$ such that $f(B)=C$, $g(C)=B$, and
$f\circ g, g\circ f$ are identity mappings on $C$ and $B$, respectively.

\begin{lemma}[Theorem 2.8, \cite{Hobby88:structure}]%
\label{lem:minimal-sets}
Let $\al,\beta\in\Con(\zA)$, $\al\prec\beta$. Then the following hold.
\begin{itemize}
\item[(1)] 
Any $(\al,\beta)$-minimal sets $U,V$ are polynomially isomorphic.
\item[(2)] 
For any $(\al,\beta)$-minimal set $U$ and any $f\in\Polo(\zA)$, if
$f(\beta\red U)\not\sse\al$ then $f(U)$ is an $(\al,\beta)$-minimal set, $U$ 
and $f(U)$ are polynomially isomorphic,  and $f$ witnesses this fact.
\item[(3)] 
For any $(\al,\beta)$-minimal set $U$ there is $f\in\Polo(\zA)$ such that
$f(\zA)=U$, $f(\beta)\not\sse\al$, and $f$ is idempotent, in particular, 
$f$ is the identity mapping on $U$.
\item[(4)] 
For any $(a,b)\in\beta-\al$ and an $(\al,\beta)$-minimal set $U$ there is
$f\in\Polo(\zA)$ such that $f(\zA)=U$ and $(f(a),f(b))\in\beta\red U-\al\red U$.
\item[(5)] For any $(\al,\beta)$-minimal set $U$, $\beta$ is the transitive closure of
$$
\al\cup\{(f(a),f(b))\mid (a,b)\in\beta\red U, f\in\Polo(\zA)\}.
$$
In fact, as $\al\prec\beta$, this claim can be strengthened as follows. 
For any $(a,b)\in\beta-\al$, $\beta$ is the symmetric and transitive closure of 
$$
\al\cup\{(f(a),f(b))\mid f\in\Polo(\zA)\}.
$$
\item[(6)] 
For any $f\in\Polo(\zA)$ such that $f(\beta)\not\sse\al$ there is an
$(\al,\beta)$-minimal set $U$ such that $f$ witnesses that $U$ and $f(U)$
are polynomially isomorphic.
\end{itemize}
\end{lemma}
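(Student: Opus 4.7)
The lemma collects standard facts of tame congruence theory; I would prove the six items in a bootstrapping order, with the finiteness of $\Polo(\zA)$ under composition as the main engine. The unifying observation is that minimality of $U$ is an extremal condition: any polynomial operation that preserves the property ``$f(\beta)\not\sse\al$'' cannot shrink the image strictly below a minimal set. I would prove (3) first, then (2) and (6) together, deduce (1) from these, and finally address (4) and (5).

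For (3), I would begin with \emph{any} $f_0\in\Polo(\zA)$ witnessing the minimal set $U$, i.e.\ $f_0(\zA)=U$ and $f_0(\beta)\not\sse\al$. Since $\zA$ is finite, the sequence $f_0^n$ stabilizes, and by picking an appropriate exponent we obtain an idempotent $e=f_0^n$ with $e(\zA)\sse U$. The delicate step is to show that we may choose $f_0$ so that $e(\beta)\not\sse\al$: if this ever failed we could find a strictly smaller image still witnessing the property after a minor modification, contradicting minimality of $U$. Once we have such an idempotent $e$ with $e(\zA)\sse U$ and $e(\beta)\not\sse\al$, minimality forces $e(\zA)=U$, and because $e$ is idempotent it is the identity on $U$.

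For (2) and (6), let $e$ be the idempotent witness for $U$ from (3), and let $f$ satisfy $f(\beta\red U)\not\sse\al$. Since $e$ is the identity on $U$, the composition $f\circ e$ has image $f(U)$ and satisfies $(f\circ e)(\beta)\supseteq f(\beta\red U)\not\sse\al$, so $f(U)$ belongs to the class over which minimal sets are defined. By minimality there is no proper witnessing subset of $f(U)$, and by composing with another idempotent witness $e'$ for some minimal set inside $f(U)$ one concludes $f(U)$ is itself minimal and that $f$ together with a suitable polynomial built from $e,e'$ implements the polynomial isomorphism. Item (6) is then immediate by applying this argument to the minimal set produced inside the image of a given $f$ with $f(\beta)\not\sse\al$, and (1) follows by applying (2) with the idempotent witness of one minimal set playing the role of $f$ on the other.

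Item (4) is obtained by combining (5) with (3): given $(a,b)\in\beta-\al$ and minimal $U$, the decomposition in (5) yields some $g\in\Polo(\zA)$ with $(g(a),g(b))\in\beta\red U-\al$; then $f=e_U\circ g$, possibly replaced by a suitable iterate to regain idempotence, works. For (5), let $\theta$ be the congruence generated by $\al$ together with all pairs $(f(a),f(b))$ for $(a,b)\in\beta\red U$. Then $\al\le\theta\le\beta$, so by $\al\prec\beta$ one of $\theta=\al$ or $\theta=\beta$ must hold. If $\theta=\al$, then taking $f=\id$ gives $\beta\red U\sse\al$, which contradicts the existence of a witness polynomial $e$ for $U$ since $e(\beta)\not\sse\al$ and $e(\beta)\sse\beta\red U$. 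The main obstacle throughout is keeping track of how iteration and composition of unary polynomials interact with the ``$\not\sse\al$'' witness condition --- one must argue carefully that minimality of $U$ always prevents the witnessing property from degrading under the compositions used; this is the content worked out in detail in Chapter~2 of \cite{Hobby88:structure}.
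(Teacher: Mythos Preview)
The paper does not prove this lemma: it is stated with attribution to Theorem~2.8 of \cite{Hobby88:structure} and used as a black box throughout, so there is no in-paper proof to compare against. Your sketch follows the standard Hobby--McKenzie development (iterate a witness to idempotence, use minimality to block any strict shrinking of the image, then bootstrap the remaining items), which is exactly the source the paper cites.

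One small wrinkle worth tightening in your outline: your derivation of (4) from (5) is stated a bit loosely. The first part of (5) goes in the wrong direction (it says $\beta\red U$ generates $\beta$, not that an arbitrary $(a,b)$ can be pushed into $U$). What you actually need is the \emph{second} part of (5): since $\al\cup\{(f(a),f(b)):f\in\Polo(\zA)\}$ generates $\beta$ and $e_U(\beta)\not\sse\al$ while $e_U(\al)\sse\al$, there must exist some $f$ with $(e_Uf(a),e_Uf(b))\notin\al$; then $g=e_Uf$ has $g(\zA)\sse U$ and $g(\beta)\not\sse\al$, and minimality of $U$ forces $g(\zA)=U$. After that, iterating $g$ recovers idempotence as you say. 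This is a routine fix, and your closing remark that the careful bookkeeping is carried out in Chapter~2 of \cite{Hobby88:structure} is entirely in line with how the paper treats the result.
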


For an $(\al,\beta)$-minimal set $U$ and a $\beta$-block $B$ such that 
$\beta\red{U\cap B}\ne\al\red{U\cap B}$, the set $U\cap B$ is said
to be an \emph{$(\al,\beta)$-trace}. A 2-element set 
$\{a,b\}\sse U\cap B$ such that $(a,b)\in\beta-\al$, is called an 
\emph{$(\al,\beta)$-subtrace}. 
Depending on the 
structure of its minimal sets the interval $(\al,\beta)$ can be of one of the 
five types, \one--\five. Since we assume the tractability conditions of the
Dichotomy Conjecture, type~\one\ does not occur in algebras we deal with.

\begin{lemma}[Chapter~4 of \cite{Hobby88:structure}]\label{lem:traces}
Let $\al,\beta\in\Con(\zA)$ and $\al\prec\beta$. Then the following hold.
\begin{itemize}
\item[(1)] 
If $\typ(\al,\beta)=\two$ then every $(\al,\beta)$-trace (considered as an algebra whose operations are the restrictions of polynomials of $\zA$ on the trace) factored modulo $\al$ is polynomially equivalent to a 1-dimensional vector space.
\item[(2)] If $\typ(\al,\beta)\in\{\three,\four,\five\}$ then every $(\al,\beta)$-minimal 
set $U$ contains exactly one trace $T$, and if $\typ(\al,\beta)\in\{\three,\four\}$,
$T$ contains only 2 elements. Also, $T\fac\al$ is polynomially equivalent
to a Boolean algebra, 2-element lattice, or 2-element semilattice, respectively.
\end{itemize}
\end{lemma}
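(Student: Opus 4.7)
The statement summarizes classification theorems from Section~4 of \cite{Hobby88:structure}, and the plan is to follow their classical route via P\'alfy's theorem on minimal algebras. I would isolate the algebra $\wh\zA$ induced by the polynomial operations of $\zA$ on a single $(\al,\beta)$-trace $T$ modulo $\al$, argue that $\wh\zA$ is a \emph{minimal algebra} in the sense that every unary polynomial is constant or a permutation, and then read off the conclusions from P\'alfy's classification of minimal algebras.

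To form the induced algebra, given an $(\al,\beta)$-minimal set $U$ and a trace $T\sse U$, take as basic operations of $\wh\zA$ the maps $T\fac\al\to T\fac\al$ induced by those $f\in\Pol(\zA)$ that stabilize $T$ modulo $\al$. To verify that $\wh\zA$ is minimal, consider a unary polynomial $g\in\Polo(\zA)$ with $g(T)\sse T$: if $g(\beta\red T)\sse\al$ then $g$ acts as a constant on $T\fac\al$; otherwise Lemma~\ref{lem:minimal-sets}(2) applies and shows that $g$ witnesses a polynomial isomorphism between $U$ and $g(U)$, and in particular $g\red T$ is injective modulo~$\al$, hence a permutation of $T\fac\al$.

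With $\wh\zA$ minimal, I would invoke P\'alfy's theorem: any minimal algebra with more than one element is polynomially equivalent either to a vector space over a prime field or to one of four two-element idempotent algebras whose clones correspond to types \one, \three, \four, \five. The two-element case with only projections is excluded by the standing assumption that type~\one\ does not occur. This immediately yields the polynomial equivalences asserted in parts~(1) and~(2). The 1-dimensionality in part~(1) follows from a finer analysis: any proper non-trivial subspace of $T\fac\al$ pulls back, via the idempotent polynomial supplied by Lemma~\ref{lem:minimal-sets}(3), to a polynomial image strictly smaller than $U$ yet still separating $\beta$ from $\al$, contradicting the minimality of $U$.

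For the remaining assertions in part~(2): in types \three\ and~\four, the Boolean or lattice operations on $T\fac\al$ force $\al\red T$ to be the equality, giving $|T|=2$, while uniqueness of the trace inside $U$ in types \three--\five\ follows from the rigidity of the 2-element clones, since a second trace $T'\sse U$ would require polynomial operations on $U\fac\al$ incompatible with the available Boolean/lattice/semilattice structure. The main obstacle in carrying this plan through is the verification that $\wh\zA$ really is minimal in P\'alfy's strict sense together with the sharpened conclusions that go beyond what P\'alfy alone gives (1-dimensionality, trace cardinality, and uniqueness of traces inside $U$); each of these steps requires careful interplay between the polynomial structure of $\zA$ and the combinatorics of $U$ and $T$ catalogued in Lemma~\ref{lem:minimal-sets}.
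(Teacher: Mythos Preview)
The paper does not prove this lemma; it is quoted from Section~4 of \cite{Hobby88:structure} as background and used without argument. So there is no ``paper's own proof'' to compare against, and your plan---reduce to the induced algebra on a trace modulo $\al$, verify it is minimal in P\'alfy's sense via Lemma~\ref{lem:minimal-sets}(2), and then invoke P\'alfy's classification---is exactly the classical Hobby--McKenzie route and is sound at the level of an outline.

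Two small corrections to your sketch. First, your argument for $1$-dimensionality is mis-phrased: a proper subspace of $T\fac\al$ is not itself a polynomial image of $\zA$, so ``pulling back to a polynomial image smaller than $U$'' does not parse. What actually works is simpler and stays inside the trace: a linear projection onto a proper nonzero subspace is a unary polynomial of $T\fac\al$ that is neither constant nor a permutation, directly contradicting minimality of $\wh\zA$. Second, excluding type~\one\ is not needed here; the lemma is a statement \emph{about} types \two--\five, and type~\one\ simply corresponds to the essentially unary branch of P\'alfy's dichotomy. Your remarks on $|T|=2$ for types \three,\four\ and on uniqueness of the trace are in the right direction but are the genuinely nontrivial parts (Hobby--McKenzie Lemmas~4.15--4.20 and Theorem~4.31); expect most of the work to be there rather than in the P\'alfy step.
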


Intervals $(\al,\beta),(\gm,\dl)$, $\al,\beta,\gm,\dl\in\Con(\zA)$ and 
$\al\prec\beta,\gm\prec\dl$ are said to be 
\emph{perspective} if $\beta=\al\join\dl,
\gm=\al\meet\dl$, or $\dl=\beta\join\gm,\al=\beta\meet\gm$.

\begin{lemma}[Lemma~6.2, \cite{Hobby88:structure}]%
\label{lem:perspective-intervals}
Let $\al,\beta,\gm,\dl\in\Con(\zA)$ be such that $\al\prec\beta,\gm\prec\dl$
and intervals $(\al,\beta),(\gm,\dl)$ are perspective. Then
$\typ(\al,\beta)=\typ(\gm,\dl)$ and a set $U$ is $(\al,\beta)$-minimal
if and only if it is $(\gm,\dl)$-minimal.
\end{lemma}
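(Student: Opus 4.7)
By the symmetry of the definition of perspective intervals, I may assume throughout that $\beta=\al\join\dl$ and $\gm=\al\meet\dl$; the case $\dl=\beta\join\gm,\ \al=\beta\meet\gm$ is handled identically after swapping the roles of the two intervals. The core of the argument is the following equivalence for every unary polynomial $f\in\Polo(\zA)$:
$$
f(\beta)\not\sse\al\quad\Longleftrightarrow\quad f(\dl)\not\sse\gm.
$$
Once this equivalence is established, the classes of $(\al,\beta)$-minimal and $(\gm,\dl)$-minimal sets automatically coincide, since minimality is just inclusion-minimality of the ranges $f(\zA)$ over the polynomials satisfying the relevant condition.

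For the forward direction I would exploit the fact that $\beta$ is the transitive closure of $\al\cup\dl$. Choose $(a,b)\in\beta$ with $(f(a),f(b))\notin\al$ and connect $a$ to $b$ by a chain $a=c_0,c_1,\ldots,c_n=b$ whose consecutive pairs lie in $\al\cup\dl$. Applying $f$ gives a chain of the same form, because any unary polynomial preserves every congruence. Passing to the quotient modulo $\al$, the $\al$-steps collapse while the $\dl$-steps produce a nontrivial $\dl/\al$-path from $f(a)/\al$ to $f(b)/\al$, so there is an index $i$ with $(c_i,c_{i+1})\in\dl$ and $(f(c_i),f(c_{i+1}))\notin\al$. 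Since $\al\meet\dl=\gm$, this pair lies in $\dl-\gm$, as required. The reverse direction is shorter: if $(a,b)\in\dl$ has $(f(a),f(b))\notin\gm$, then $(a,b)\in\beta$ and $(f(a),f(b))\in\dl$; if $(f(a),f(b))$ were in $\al$ it would fall in $\al\meet\dl=\gm$, contradicting the choice, so $f(\beta)\not\sse\al$.

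It remains to prove type equality. Fix a common minimal set $U$ with an idempotent witness $e\in\Polo(\zA)$ satisfying $e(\zA)=U$, supplied by Lemma~\ref{lem:minimal-sets}(3). Composing with $e$ in the chain argument above shows that inside the induced algebra on $U$ the restricted intervals $(\al\red U,\beta\red U)$ and $(\gm\red U,\dl\red U)$ are still perspective. Because the collections of polynomials of $\zA\red U$ witnessing the two traces are identical, the trace structure on $U$ modulo $\al\red U$ and modulo $\gm\red U$ produce polynomially equivalent minimal algebras; matching these against the catalogue in Lemma~\ref{lem:traces} forces $\typ(\al,\beta)=\typ(\gm,\dl)$. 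The main obstacle I anticipate is precisely this last step: aligning traces for the two intervals inside the shared minimal set and showing that a single trace for one interval yields the appropriate trace structure for the other, so that the induced algebras match. This is exactly where the polynomial equivalence proved in the first part of the argument is indispensable.
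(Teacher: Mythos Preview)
The paper does not prove this lemma: it is quoted verbatim as Lemma~6.2 of \cite{Hobby88:structure} and no proof is supplied, so there is no in-paper argument to compare against. Your approach is essentially the standard Hobby--McKenzie proof: the equivalence $f(\beta)\not\sse\al\Leftrightarrow f(\dl)\not\sse\gm$ via a $(\al\cup\dl)$-chain is exactly how one shows that the two prime quotients have the same minimal sets, and your two directions are correct as written.

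The only place where your sketch is thinner than the original is the type-equality step. In \cite{Hobby88:structure} one does not merely observe that the restricted intervals are ``still perspective'' and appeal to a catalogue; rather, one shows that inside a common minimal set $U$ the $(\al,\beta)$-traces and the $(\gm,\dl)$-traces coincide as sets, and that the map $N\fac\al\to N\fac\gm$ sending $a\fac\al$ to $a\fac\gm$ is a polynomial isomorphism of the induced trace algebras (this uses that $\al\meet\dl=\gm$ on $U$ and the chain argument you already have). Once the trace algebras are polynomially isomorphic the types agree by definition. Your final paragraph gestures at this but stops short of identifying the traces; filling that in is the one genuine gap in the write-up, though the idea you flag---that the polynomial equivalence from the first part is what drives it---is exactly right.
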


We will also use polynomials that behave on a minimal set in a particular way.

\begin{lemma}[Lemmas 4.15, 4.17, \cite{Hobby88:structure}]%
\label{lem:pseudo-meet}
Let $\al,\beta\in\Con(\zA)$, $\al\prec\beta$, and
$\typ(\al,\beta)\in\{\three,\four,\five\}$. Let $U$ be an $(\al,\beta)$-minimal
set and $T$ its only trace. Then there is an element $1\in T$ and a binary polynomial 
$p$ of $\zA$ such that
\begin{itemize}
\item[(1)] 
$(1,a)\not\in\al$ for any $a\in U-\{1\}$;
\item[(2)] 
for all $a\in U-\{1\}$, the algebra $(\{a,1\},p)$ is a semilattice with 
neutral element~$1$, that is, $p(1,1)=1$ and $p(1,a)=p(a,1)=p(a,a)=a$.
\end{itemize}
Polynomial $p$ is said to be a \emph{pseudo-meet} operation on $U$.

If $\typ(\al,\beta)\in\{\three,\four\}$ then $|T|=2$, say, $T=\{0,1\}$, 
and there is a binary polynomial $q$ of $\zA$, a \emph{pseudo-join} operation,
that satisfies the conditions of item (2) except $q(1,0)=q(0,1)=1$.
\end{lemma}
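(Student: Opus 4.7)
The strategy is to start from the semilattice operation that Lemma~\ref{lem:traces}(2) supplies on the trace $N$ modulo $\al$ and upgrade it to a binary polynomial of $\zA$ whose semilattice identities hold on the nose on $U$, not merely modulo $\al$. The separation condition~(1) will then follow from choosing $1 \in N$ to lie in the unique singleton $\al$-class inside $U$. For type \five, the induced algebra on $N/\al$ is a two-element semilattice, so its meet lifts to a polynomial $p_0$ of $\zA$ with $p_0|_N$ realizing the meet modulo $\al$; for types \three\ and \four, $N/\al$ carries a Boolean algebra or lattice structure, and I pick $1/\al$ to be the top of the meet and lift that meet analogously. Composing with the idempotent polynomial $e$ of Lemma~\ref{lem:minimal-sets}(3) with $e(\zA)=U$ produces $p_1(x,y) := e(p_0(e(x), e(y)))$, which maps $U^2$ into $U$ and satisfies $p_1(1,1) \equiv 1$ and $p_1(1,a) \equiv p_1(a,1) \equiv p_1(a,a) \equiv a \pmod{\al}$ for every $a \in U - \{1\}$, using that the tail $U \setminus N$ of an $(\al,\beta)$-minimal set of type \three, \four, or \five\ collapses into the non-neutral $\al$-class of $N$.

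The crucial second step is to force these congruences into equalities. With $U$ finite, each of the unary polynomials $g_1(y) := p_1(1,y)$, $g_2(x) := p_1(x,1)$, $g_3(x) := p_1(x,x)$ --- all polynomials of $\zA$ with image in $U$ --- has an idempotent power, say $g_i^m|_U$. Each $g_i^m$ is identity modulo $\al$ on $U$, and since it preserves all $\al$-classes it cannot collapse $\beta|_U$ into $\al|_U$; hence $g_i^m$, viewed as a polynomial of $\zA$ with image $g_i^m(\zA) \subseteq U$, does not collapse $\beta$ into $\al$, so the minimality of $U$ forces $g_i^m(\zA) = U$, and combined with idempotency on $U$ this yields $g_i^m|_U = \mathrm{id}_U$. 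To produce a single polynomial $p$ realizing all three identities simultaneously, I would iterate $p_1$ in both coordinates in a balanced pattern of depth large enough to reach the common idempotent of $g_1, g_2, g_3$ in the finite local monoid of binary polynomials on $U$, obtaining $p$ with $p(1,y) = y$, $p(x,1) = x$, and $p(x,x) = x$ for all $x,y \in U$ --- which is exactly condition~(2).

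The pseudo-join assertion for types \three\ and \four\ is handled symmetrically: $N/\al$ is a two-element lattice, so the same construction with join replacing meet yields $q$ with $q(0,0) = 0$ and $q(0,1) = q(1,0) = q(1,1) = 1$. I anticipate the main obstacle to be step two, and specifically the bookkeeping needed to combine three separate pointwise-identity statements --- one each for $p(1,\cdot)$, $p(\cdot,1)$, and $p(\cdot,\cdot)$ --- into a single binary polynomial $p$, since naive iteration in one coordinate only fixes one of the three identities. The balanced-depth iteration argument in the finite local monoid of binary polynomials on $U$ is the substantive content and the reason this appears as Lemmas~4.16--4.17 of \cite{Hobby88:structure} rather than as a direct corollary of the type classification.
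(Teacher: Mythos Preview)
The paper does not give its own proof of this lemma; it is cited from \cite{Hobby88:structure} (Lemmas 4.16--4.17) and used as a black box. There is therefore no paper proof to compare your proposal against.

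That said, your plan has a real gap. The claim that the tail $U\setminus N$ ``collapses into the non-neutral $\al$-class of $N$'' is false in general: tail elements may lie in $\beta$-blocks disjoint from $N$ and hence need not be $\al$-related to anything in $N$. Your iteration argument requires $p_1(1,a)\eqc\al a$ for \emph{every} $a\in U$ before you can force equality, but lifting the semilattice on $N\fac\al$ only controls $p_1$ on $N$, not on the tail. You also have not established why one of the two $\al\red U$-classes inside $N$ is a singleton in all of $U$ (condition~(1)); this is itself a structural fact about minimal sets of types \three, \four, \five, not an immediate consequence of the type classification of the trace. The Hobby--McKenzie argument handles both issues together by analysing the polynomial clone of $\zA$ restricted to $U$ directly, rather than by lifting a semilattice from $N\fac\al$ and iterating.
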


\subsection{Coloured graphs}\label{sec:colored-graphs}

In \cite{Bulatov04:graph,Bulatov08:recent,Bulatov16:connectivity,%
Bulatov20:graph,Bulatov20:maximal,Bulatov20:restricted} we introduced a 
local approach to the
structure of finite algebras. As we use this approach throughout the paper, 
we present it here in some detail, see also 
\cite{Bulatov20:graph,Bulatov20:maximal}. 
For the sake of the definitions below we slightly abuse terminology 
and by a module mean an algebra term equivalent to the full idempotent reduct of a module.

For an algebra $\zA$ the (undirected) graph $\cG(\zA)$ is defined as follows. 
The vertex set is the universe $A$ of $\zA$. An unordered pair $ab$ of vertices is an 
\emph{edge} if and only if there exists a proper congruence $\th$ of 
$\Sg{a,b}$, and either $\Sg{a,b}\fac\th$ is 
a set (that is an algebra all of whose term operations are projections),
or there is a term operation $f$ of $\zA$ 
such that either $\Sg{a,b}\fac\th$ is a module and $f$ is an affine 
operation $x-y+z$ on it (`affine operation' will always refer to $x-y+z$), or $f$ is a semilattice operation on
$\{a\fac\th,b\fac\th\}$, or $f$ is a majority operation on
$\{a\fac\th,b\fac\th\}$. (Note that we use the same operation symbol in 
these cases.) 

If there are a proper congruence $\th$ and a term operation $f$ of $\zA$ such that $f$ 
is a semilattice operation on $\{a\fac\th,b\fac\th\}$ then $ab$ is said to have the
\emph{semilattice type}. An edge $ab$ is of 
\emph{majority type} if there are 
a proper congruence $\th$ and a term operation $f$ such that $f$ is a majority
operation on $\{a\fac\th,b\fac\th\}$ and there is no semilattice 
term operation on $\{a\fac\th,b\fac\th\}$. Also, $ab$ 
has the \emph{affine type} if there are proper $\th$ and $f$ 
such that $f$ is an affine operation on $\Sg{a,b}\fac\th$ and 
$\Sg{a,b}\fac\th$ is a module; in particular it implies that 
there is no semilattice or majority operation on $\{a\fac\th,b\fac\th\}$.  
Finally, if $\{a\fac\th,b\fac\th\}$ is a set, $ab$ is said to have the 
\emph{unary type}. In all cases we say that congruence $\th$ 
\emph{witnesses} the type of edge $ab$. For an edge $ab$ the set 
$\{a\fac\th,b\fac\th\}$ is said to be a \emph{thick edge}. Observe that 
a pair $ab$ can be an edge of more 
than one type as witnessed by different congruences, although this has no 
consequences in this paper.

Omitting type \one\ is characterized in \cite[Theorem~5]{Bulatov20:graph}. The second part of the next statement easily follows from \cite[Theorem~5]{Bulatov20:graph}.

\begin{theorem}[Theorem~5, \cite{Bulatov20:graph}]%
\label{the:connectedness}
An idempotent algebra $\zA$ omits type \one\ (that is, the variety generated
by $\zA$ omits type \one) if and only if $\cG(\zA)$ contains no edges of the 
unary type.

Moreover, a finite class $\cK$ of similar idempotent algebras closed under 
subalgebras and quotient algebras omits type~\one\ if and only if $\cG(\zA)$ 
contains no edges of the unary type for any $\zA\in\cK$.
\end{theorem}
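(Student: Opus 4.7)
The plan is to prove Theorem~\ref{the:connectedness} by contrapositive in both directions, so I will concentrate on the equivalence for a single algebra $\zA$ (the class version follows by applying the single-algebra statement inside the closed class, since $S,H$-closure lets us replace $\zA$ by any witness in $HS(\zA)$).

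For the forward direction, I would start from a unary-type edge $ab$ witnessed by a maximal congruence $\th$ of $\Sg{a,b}$ with $\Sg{a,b}\fac\th$ a two-element set, i.e.\ an algebra all of whose term operations are projections. Since $\th$ is maximal, the interval $\th\prec\zo_{\Sg{a,b}}$ is prime in $\Con(\Sg{a,b})$. Any $(\th,\zo)$-minimal set $U$ of $\Sg{a,b}$ projects onto the two-element quotient, so the $(\th,\zo)$-trace $N$ modulo $\th$ must be exactly $\{a\fac\th,b\fac\th\}$. Because every term operation of $\Sg{a,b}\fac\th$ is a projection, the induced polynomial clone on the trace is that of a $G$-set with trivial group, so by Lemma~\ref{lem:traces} and the classification of minimal algebras in \cite{Hobby88:structure}, $\typ(\th,\zo)=\one$. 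This contradicts the assumption that $\zA$ (and hence $\Sg{a,b}\in\S(\zA)$) omits type~\one.

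For the reverse direction, suppose $\zA$ does not omit type~\one, so by standard tame-congruence arguments there is an algebra in $\HH\S(\zA)$ with a prime interval of type~\one. Passing to a quotient, I may assume there is an algebra $\zB$ in the class with a prime interval $\zz\prec\beta$ of type~\one\ in $\Con(\zB)$. Pick a $(\zz,\beta)$-subtrace $\{a,b\}$ and let $\zC=\Sg{a,b}$, which also lies in the class. Choose a congruence $\th$ of $\zC$ maximal with respect to separating $a$ and $b$; then $\zC\fac\th$ is a simple two-generated algebra in which $a\fac\th\ne b\fac\th$, and the prime interval $\th\prec\zo_\zC$ is perspective (or at least of the same type, via Lemma~\ref{lem:perspective-intervals} and iterated use of the subtrace property) to a type~\one\ interval inherited from the original interval in $\zB$. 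Therefore $\typ(\th,\zo_\zC)=\one$, and by Lemma~\ref{lem:traces} together with the characterization of type~\one\ minimal algebras, the quotient $\zC\fac\th$, restricted to the trace, carries only projection term operations. Idempotence upgrades this to the statement that $\zC\fac\th$ itself is a set on $\{a\fac\th,b\fac\th\}$, witnessing that $ab$ is a unary-type edge of $\cG(\zC)$ (and of $\cG(\zB)$ after lifting if one wants to stay inside the same algebra of the class).

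The conceptual backbone is therefore the dictionary: type~\one\ intervals correspond exactly to two-generated simple quotients whose only term operations are projections, which is the definition of a unary-type thick edge. The main obstacle I expect is the bookkeeping in the reverse direction: one needs to pass from an arbitrary type~\one\ prime interval in $\zB$ to a maximal congruence $\th$ of the two-generated subalgebra $\Sg{a,b}$ with the property that $\Sg{a,b}\fac\th$ is genuinely two-element and a set. The step that $\th\prec\zo_{\Sg{a,b}}$ still has type~\one\ (rather than sliding to another type on the smaller algebra) is where one must carefully combine idempotence, the subtrace selection, and the fact that on a two-element simple idempotent algebra the only possible tame-congruence types are \one\ and the structural types already excluded by the absence of semilattice, majority and affine term operations on $\{a\fac\th,b\fac\th\}$. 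Once this local reduction is done, the equivalence falls out immediately from the edge-type dichotomy built into the definition of $\cG(\zA)$.
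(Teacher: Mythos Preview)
The paper does not actually prove Theorem~\ref{the:connectedness}; it is quoted from \cite{Bulatov16:connectivity} as background in the preliminaries, so there is no in-paper proof to compare against. I can only comment on the soundness of your sketch on its own.

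Your forward direction is fine: if $\Sg{a,b}\fac\th$ is a two-element idempotent algebra whose term operations are all projections, then the trace of the prime interval $\th\prec\zo_{\Sg{a,b}}$ is a $G$-set with trivial group, hence $\typ(\th,\zo)=\one$, and $\Sg{a,b}\in\S(\zA)$ gives the contradiction.

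The reverse direction, however, has the gap you yourself flag, and the patch you suggest does not close it. Starting from a type-\one\ subtrace $\{a,b\}$ in $\zB$ only tells you that the \emph{polynomial} clone induced on $\{a,b\}$ is trivial; it does not force term operations of $\Sg{a,b}$, let alone of $\Sg{a,b}\fac\th$ for an arbitrary maximal $\th$, to be projections. Your appeal to perspectivity is also off: Lemma~\ref{lem:perspective-intervals} compares intervals within one congruence lattice, whereas you have silently changed algebras from $\zB$ to $\Sg{a,b}$. The clean route is to bypass subtraces entirely and use the standard Hobby--McKenzie characterization for idempotent varieties: failure to omit type~\one\ yields a two-element algebra in $\H\S(\zA)$ all of whose term operations are projections. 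Writing this two-element set as $\zB\fac\th$ with $\zB\le\zA$, pick preimages $a,b$ of the two elements, restrict $\th$ to $\zC=\Sg{a,b}$, and then enlarge to a congruence $\th'$ maximal among those separating $a$ and $b$. Since $\zC\fac{\th'}$ is a nontrivial quotient of the two-element set $\zC\fac{(\th\red\zC)}$, it is that same two-element set, and $\th'$ is maximal because $a,b$ generate $\zC$. This exhibits $ab$ as a unary-type edge without any type-preservation bookkeeping.
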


For the sake of the Dichotomy Conjecture, it suffices to consider 
\emph{reducts} of an algebra $\zA$ omitting type \one, 
that is, algebras with the same universe but 
reduced set of term operations, as long as reducts also omit type \one. 
In particular, we are interested in reducts of $\zA$, in which semilattice and 
majority edges are subalgebras. An algebra $\zA$ such that 
$a\fac\th\cup b\fac\th$ is a subuniverse of $\zA$ 
for every semilattice or majority edge $ab$ of $\zA$ is called 
\emph{smooth}. 
It is easy to see that that every subalgebra and every quotient of a smooth algebra is smooth.
By \cite[Theorem~12]{Bulatov20:graph}
if $\cG(\zA)$ contains no unary edges, there exists a reduct $\zA'$ 
of $\zA$ such that $\zA'$ is smooth and $\cG(\zA')$ contains no
edges of the unary type. 
From this point on all algebras occurring in the paper are idempotent and omit type~\one, unless stated otherwise.

Many concepts and results in the paper involve a class of algebras rather than a 
single algebra. Such a class, usually denoted by $\cK$, is finite, consists of smooth algebras, and is closed under taking subalgebras and quotient algebras. A class of similar algebras satisfying these conditions will be called a \emph{smooth} class. 
For a smooth class $\cK$ let $\cV$ be the class of finite 
algebras from the variety it generates, that is, the pseudovariety generated 
by $\cK$. We will slightly abuse the terminology and call $\cV$ the variety 
generated by $\cK$.  

Observe that as the following example shows even though $\cK$ consists of smooth algebras, algebras in $\cV$ are not necessarily smooth.

\begin{example}\label{exa:non-smooth}
Let $\zA$ be an algebra with universe $A=\{a,b,c\}$ and two basic operations $f$ and $g$. The operation $f$ is majority on $A$, and $g$ is minority on $\{a,b\}$ and $\{a,c\}$, and a 2/3-minority on $\{b,c\}$, that is, $g(x,y,y)=g(x,y,x)=g(y,y,x)=x$ on $\{b,c\}$. If $\{x,y,z\}=\{a,b,c\}$ then $f(x,y,z)=g(x,y,z)=x$. As is easily seen, all three 2-element subsets of $A$ are subuniverses and there is no term operation of $\zA$ that is semilattice on any of the three 2-element subsets. Therefore, each of the pairs $ab$, $bc$, $ac$ is a majority edge as witnessed by the equality relation, and $\zA$ is smooth. Consider $\zA^2$ and $(a,b),(b,c)\in\zA^2$. Since $\zA^2$ does not have a binary term operation acting as a semilattice operation on these two pairs, $(a,b)(b,c)$ is a majority edge in $\zA^2$ witnessed by the equality relation. However, $\{(a,b),(b,c)\}$ is not a subalgebra, because
\[
g\left(\cl ab,\cl bc,\cl ab\right)=\cl{g(a,b,a)}{g(b,c,b)}=\cl bb.
\]
\end{example}

The next statement uniformizes the operations witnessing the type of edges
in smooth algebras.

\begin{theorem}[Theorem~21 and Corollary~22, \cite{Bulatov20:graph}]%
\label{the:uniform}
Let $\cK$ be a smooth class and $\cV$ the variety it generates. There are term operations $f,g,h$ of 
$\cV$ such that for any $\zA\in\cK$, any edge $ab$ of $\zA$, and $E=\{a\fac\th,b\fac\th\}$, where $\th$ is a congruence witnessing that $ab$ is an edge,
\begin{itemize}
\item[(i)]
$f\red E$ is a semilattice operation if $ab$ is a semilattice edge; 
it is the first projection if $ab$ is a majority or affine edge;
\item[(ii)]
$g\red E$ is a majority operation if $ab$ is a majority edge;
it is the first projection if $ab$ is an affine edge, and
$g\red E(x,y,z)=f\red E(x,f\red E(y,z))$ if $ab$ is
semilattice;
\item[(iii)]
$h\red{\Sg{a,b}\fac\th}$ is an affine operation if $ab$ is an affine edge; 
it is the first projection if $ab$ is a majority edge, and
$h\red E(x,y,z)=f\red E(x,f\red E(y,z))$ if $ab$ is semilattice.
\end{itemize}
Moreover, operations $g,h$ can be chosen such that for any $\zA\in\cV$ and any affine edge $ab$ of $\zA$, where $\th$ is a congruence witnessing that $ab$ is an affine edge, $g$ is the first projection  and $h$ is an affine operation on $\Sg{a,b}\fac\th$.
\end{theorem}

\begin{proof}
Theorem~\ref{the:uniform} is proved in \cite{Bulatov20:graph} except for the part concerning the behavior of $g,h$ on algebras from $\cV$. A proof of this part is almost verbatim a part of the proof of Theorem~21 from \cite{Bulatov20:graph}.

Let $\vc\zD\ell$ be a list of all 2-generated algebras from $\cV$ (up to an isomorphism) that are term equivalent to a module. There are finitely many such algebras, because $\cV$ is finitely generated. It suffices to find $g,h$ that act correctly on the $\zD_i$'s, because $\Sg{a,b}\fac\th$ for any thick affine edge $ab$ of an algebra from $\cV$ is in this list. Let $\vc h\ell$ be a list of terms of $\cV$ such that $h_i$ is the affine operation of $\zD_i$. Note that every binary term operation of $\zD_i$ is of the form $\al x+(1-\al)y$.

First we show that for any $1\in[\ell]$ there is $h^i$ such that $h^i$ is the affine operation of $\zD_j$, $j\le i$. The base case of induction is given by $h^1=h_1$. If $h^{i-1}$ is constructed, then if $h^{i-1}$ is the affine operation of $\zD_i$ then set $h^i=h^{i-1}$. Otherwise, $h^{i-1}=\al x+\beta y+\gm z$ on $\zD_i$ with $\al+\beta+\gm=1$. Set $h'(x,y)=h^{i-1}(x,y,y)$, $h''(x,y)=h^{i-1}(y,y,x)$, and observe that $h'(x,y)=\al x+(1-\al)y$ on $\zD_i$, $h''(x,y)=\gm x+(1-\gm)y$ on $\zD_i$, and $h'(x,y)=h''(x,y)=x$ on $\zD_j$, $j<i$. Let 
\[
h''_1(x,y,z)=h'(x,h_i(x,y,z)),\qquad h''_3(x,y,z)=h''(x,h_i(z,y,x)).
\]
Furthermore, set $h''_2(x,y,z)=h''_1(h^{i-1}(x,y,z),z,x)$ and 
\[
h^i(x,y,z)=h''_1(h''_2(h''_3(x,y,z),y,z),y,z).
\]
It is straightforward to verify that $h^i$ satisfies the desired conditions. We now set $h^\dg=h^\ell$.

Next, we prove that $g$ can be chosen such that $g(x,y,z)=x$ on $\zD_i$ for all $i\in[\ell]$. Let $g^\dg$ be operations satisfying item (ii) of the theorem. Let $\vc Ck$ be a list of all thick majority edges of algebras from $\cK$. Since the algebras from $\cK$ are smooth, every $C_j$ is a 2-element algebra. We start with showing that for every $j\in[k]$ there is a term $p_j$ such that $p_j(x,y)=x$ on $C_j$ and $p_j(x,y)=y$ on $\zD_i$, $i\in[\ell]$. As $C_j$ is a majority but not a semilattice edge, every binary term operation is a projection on $C_j$, and the term $h^\dg$ can only be one of the following operations on $C_j$: a projection, a majority operation, a 2/3-minority operation (i.e.\ one satisfying the identities $h^\dg(x,x,y)=h^\dg(x,y,x)=y, h^\dg(y,x,x)=x$ or similar), or a minority operation. If $h^\dg$ on $C_j$ satisfies the identity $h^\dg(x,y,y)=y$ or $h^\dg(y,y,x)=y$ then set $p_j(x,y)=h^\dg(y,x,x)$ or $p_j(x,y)=h^\dg(x,x,y)$, respectively. If $h^\dg(x,y,x)=x$ on $C_j$, then set $p_j(x,y)=h^\dg(x,h^\dg(x,y,x),x)$. Finally, suppose $h^\dg$ on $C_j$ is the minority operation and $g^\dg(x,y,z)=\al_ix+\beta_iy+\gm_iz$ on $\zD_i$, $i\in[\ell]$. Then set $s_1(x,y)=h^\dg(g^\dg(x,y,y),y,g^\dg(y,y,x))$ and $s_2(x,y)=g^\dg(x,y,x)$. As is easily seen, $s_1(x,y)=y,s_2(x,y)=x$ on $C_j$ and $s_1(x,y)=s_2(x,y)=(1-\beta_i)x+\beta_i y$ on $\zD_i$. Then set $p_j(x,y)=h^\dg(s_1(x,y),s_2(x,y),y)$. We have $p_j(x,y)=h^\dg(y,x,y)=x$ on $C_j$ and $p_j(x,y)=y$ on each $\zD_i$, $i\in[\ell]$, as required. Note also, that for any $j'\in[k]$ the operation $p_j$ is a projection on $C_{j'}$.

Now, we prove by induction that for every $j\in[k]$ there is an operation $p^j(x,y)$ such that $p^j(x,y)=x$ on $C_r$ for $r\le j$ and $p^j(x,y)=y$ on $\zD_i$, $i\in[\ell]$. The operation $p^1=p_1$ gives the base case of induction. Let us assume that $p^{j-1}$ is already found. If $p^{j-1}(x,y)=x$ on $C_j$, set $p^j=p^{j-1}$. Otherwise, set $p^j(x,y)=p_j(x,p^{j-1}(x,y))$. As is easily seen, $p^j$ satisfies the required conditions. Finally, let $p=p^k$ and set $g^\ddg(x,y,z)=p(g^\dg(x,y,z),x)$, which is as desired.

The last step in the proof is to make sure that $g$ and $h$ act correctly on semilattice edges, and that $h$ acts correctly on majority edges. For the latter it suffices to set $h^\ddg(x,y,z)=p(x,h^\ddg(x,y,z)$. For the former, note that the operation $f$ from item (i) is a projection on the $C_j$'s and an operation of the form $\al x+(1-\al)y$ on the $\zD_i$'s. In the latter case by iterating $f$ we can assume that $\al$ is an idempotent. The operation $f(f(x,y),x)$ is semilattice on every semilattice edge of algebras from $\cK$ and the first projection on the $C_j$'s and $\zD_i$'s. Replace $f$ with this operation. Then 
\begin{align*}
g(x,y,z) &=g^\ddg(f(x,f(y,z)),f(y,f(z,x)),f(z,f(x,y))),\quad \text{and}\\
h(x,y,z) &=h^\ddg(f(x,f(y,z)),f(y,f(z,x)),f(z,f(x,y)))
\end{align*}
are as required.
\end{proof}

Operations $f,g,h$ from Theorem~\ref{the:uniform} above can be chosen to satisfy certain identities. Lemma~\ref{lem:fgh-identities} is proved in \cite{Bulatov20:graph} for algebras from $\cK$. However, since $\cV$ is generated by $\cK$, the same identities hold in $\cV$.

\begin{lemma}[Lemma~23 of \cite{Bulatov20:graph}]\label{lem:fgh-identities}
Operations $f,g,h$ identified in Theorem~\ref{the:uniform} can be chosen 
such that
\begin{itemize}\itemsep0pt
\item[(1)]
$f(x,f(x,y))=f(x,y)$ for all $x,y\in\zA\in\cV$;
\item[(2)]
$g(x,g(x,y,y),g(x,y,y))=g(x,y,y)$ for all $x,y\in\zA\in\cV$;
\item[(3)]
$h(h(x,y,y),y,y)=h(x,y,y)$ for all $x,y\in\zA\in\cV$.
\end{itemize}
\end{lemma}

We will assume that for a smooth class $\cK$ operations $f,g,h$ satisfying the conditions of Theorem~\ref{the:uniform} and Lemma~\ref{lem:fgh-identities} are chosen and fixed. 

Thin edges also introduced in \cite{Bulatov20:graph} offer a better technical tool. Note that thin edges are defined 
for any algebra from the variety generated by $\cK$. Although we keep the same notation $ab$ for thin edges as for usual edges introduced above, thin edges are directed and the direction depends on the choice of the operations $f,g,h$ in Theorem~\ref{the:uniform}.

For a smooth class $\cK$, $\zA\in\cK$, and $a,b\in\zA$, the pair $ab$ is called a 
\emph{thin semilattice edge} if the equality relation
witnesses that it is a semilattice edge and $f(a,b)=f(b,a)=b$, which defines the direction of $ab$.
The binary operation $f$ from Theorem~\ref{the:uniform} can be 
chosen to satisfy a special property. 

\begin{prop}[Proposition~24, \cite{Bulatov20:graph}]\label{pro:good-operation}
Let $\cK$ be a smooth class. The operation $f$ identified in Theorem~\ref{the:uniform} and Lemma~\ref{lem:fgh-identities} can be chosen to also satisfy the following: for 
any $a,b\in\zA$, $\zA\in\cK$, either $a=f(a,b)$ or the pair $(a,f(a,b))$ 
is a thin semilattice edge.
\end{prop}

We assume that operation $f$ satisfying the conditions of 
Proposition~\ref{pro:good-operation} is fixed, and use $\cdot$ to
denote it (think multiplication). If $ab$ is a thin semilattice edge,
that is, $a\cdot b=b\cdot a=b$, we write $a\le b$.

Let again $\cK$ be a smooth class and $\cV$ the variety it generates. Let $\zA\in\cV$, $a,b\in\zA$, 
$\zB=\Sg{a,b}$, and let $\th$ be a congruence of $\zB$. Pair $ab$ is said to be 
\emph{minimal} with respect to $\th$ if for any $b'\in b\fac\th$, 
$b\in\Sg{a,b'}$. A ternary term $g'$ is said to satisfy the \emph{majority
condition} (with respect to $\cK$) if it satisfies the conditions of Lemma~\ref{lem:fgh-identities}(2) and $g'$ is a majority operation on every 
thick majority edge of every algebra from $\cK$. A ternary term  
$h'$ is said to satisfy the \emph{minority condition} if it satisfies the conditions of Lemma~\ref{lem:fgh-identities}(3) and 
for any $\zB\in\cK$ and every affine edge $ab$ of $\zB$ witnessed 
by a congruence $\th$ of $\Sgg\zB{a,b}$, the operation $h'$ is a Mal'tsev 
operation on $\Sgg\zB{a,b}\fac\th$. By 
Theorem~\ref{the:uniform} operations satisfying the majority and 
minority conditions exist.

Let $\zA\in\cV$ and $a,b\in\zA$. 
The (ordered) pair $ab$ is a \emph{thin semilattice edge} 
if the term $\cdot$ of $\cV$ is a semilattice operation on $\{a,b\}$ and $a\cdot b=b$. 

A pair $ab$ is called a \emph{thin majority edge} if 
\begin{itemize}
\item[(*)] 
for any term operation $g'$ satisfying the majority condition
the subalgebras $\Sg{a,g'(a,b,b)},\Sg{a,g'(b,a,b)},\Sg{a,g'(b,b,a)}$
contain $b$.
\end{itemize}

A pair $ab$ is called a \emph{thin affine edge} if 
\begin{itemize}
\item[(**)] 
$h(b,a,a)=b$ (where $h$ 
is the fixed operation satisfying the conditions of Theorem~\ref{the:uniform}), 
and for any term operation $h'$ satisfying the minority condition $b\in\Sg{a,h'(a,a,b)}$.
\end{itemize}
The operations $g,h$ from Theorem~\ref{the:uniform} do not 
have to satisfy any specific conditions on the set $\{a,b\}$, when
$ab$ is a thin majority or affine edge, except what follows from their 
definition. Also, both thin majority and thin affine edges are 
directed, since $a,b$ in the definition occur asymmetrically. Note also, that 
what pairs of an algebra $\zA$ are thin majority and affine 
edges depend not only on the algebra itself, but also on the underlying
class $\cK$. If we are not interested in any particular class, just the algebra itself, set 
$\cK=\H\S(\zA)$. We now fix a smooth class $\cK$ and the variety $\cV$ it generates along with operations $f,g,h$ satisfying the conditions of Theorem~\ref{the:uniform}, Lemma~\ref{lem:fgh-identities}, and Proposition~\ref{pro:good-operation}. All algebras in the rest of the paper are from $\cV$ and smooth ones are from $\cK$, unless otherwise stated.

\begin{lemma}[Corollary~25, Lemmas~28,32, \cite{Bulatov20:graph}]%
\label{lem:thin-semilattice}
Let $\zA\in\cK$, and let $ab$ be a semilattice (majority, affine) edge, $\th$ a congruence of
$\Sg{a,b}$ that witnesses this, and $c\in a\fac\th$. Then, if $ab$ is 
a semilattice edge such that $f(a\fac\th,b\fac\th)=b\fac\th$ or a majority edge, then for any $d\in b\fac\th$ such that 
$cd$ is a minimal pair with respect to $\th\red{\Sg{c,d}}$ the pair $cd$ is a thin 
semilattice [respectively, thin majority] edge. If $ab$ is affine then for any 
$d\in b\fac\th$ such that $ad$ is a minimal pair with respect to 
$\th\red{\Sg{c,d}}$ and $h(d,a,a)=d$ the pair $ad$ is a thin affine edge. Moreover,
$d\in b\fac\th$ satisfying these conditions exists.
\end{lemma}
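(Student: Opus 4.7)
My plan is to split the proof into an existence part and a verification part, handling the three edge types in parallel: existence comes from minimizing the size of a generated subalgebra, and minimality of the chosen pair unlocks the thin-edge conditions.

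For existence in the semilattice and majority cases, I would choose $d\in b\fac\th$ with $|\Sg{c,d}|$ minimum. For any $d'\in b\fac\th\cap\Sg{c,d}$, we have $\Sg{c,d'}\sse\Sg{c,d}$, so size-minimality forces equality and hence $d\in\Sg{c,d'}$; since $b\fac\th\cap\Sg{c,d}$ is exactly the $\th$-class of $d$ inside $\Sg{c,d}$, this says $cd$ is minimal with respect to $\th$. For the affine case I additionally need $h(d,a,a)=d$. The polynomial $F(x)=h(x,a,a)$ maps $b\fac\th$ into itself and acts as the identity modulo $\th$ (since $h$ is Mal'tsev on the quotient), so iterating $F$ produces an eventually periodic sequence; picking a sufficiently high iterate $F^N$ idempotent on its image, and then $d$ in this image with $|\Sg{a,d}|$ minimum, I would argue $F(d)=d$ using the affine structure on the quotient, while minimality of $ad$ follows as in the previous cases.

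For the verification step, in the semilattice case let $f$ be the semilattice witness of $ab$; then $f(c,d),f(d,c)\in b\fac\th\cap\Sg{c,d}$, and minimality supplies terms $s_1,s_2$ with $s_1(c,f(c,d))=d$ and $s_2(c,f(d,c))=d$. Iteratively composing $s_1,s_2,f$ and reapplying minimality to intermediate elements of $b\fac\th\cap\Sg{c,d}$ until both sides stabilize, I build a single binary term $f'$ with $f'(c,d)=f'(d,c)=d$, which is the thin semilattice condition. In the majority case, for any $g'$ satisfying the majority condition the three elements $g'(c,d,d),g'(d,c,d),g'(d,d,c)$ lie in $b\fac\th\cap\Sg{c,d}$ (by smoothness, which makes $a\fac\th\cup b\fac\th$ a subuniverse, and by $g'$ acting as majority on the two classes modulo $\th$); minimality of $cd$ then directly yields $d\in\Sg{c,g'(c,d,d)}$ and the two analogous memberships, establishing condition~$(*)$. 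Smoothness also ensures $cd$ remains a thick majority edge with witness $\th\cap\Sg{c,d}^2$, so $cd$ is in fact a \emph{special} thin majority edge. For affine, any term $h'$ satisfying the minority condition sends $(a,a,d)$ into $b\fac\th\cap\Sg{a,d}$, and minimality of $ad$ gives $d\in\Sg{a,h'(a,a,d)}$, which is $(**)$; the equation $h(d,a,a)=d$ was secured in the existence step.

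The main obstacle is the affine existence argument: iteration of $F$ guarantees only a cyclic point a~priori, not a genuine fixed point, so turning this into $F(d)=d$ requires careful use of the fact that $F$ is the identity modulo~$\th$, combined with the $\Sg{a,d}$-minimization in a way compatible with the iteration. A secondary technical point is the construction of the single binary term $f'$ in the semilattice case: the obvious compositions of $s_1,s_2,f$ only force one of $f'(c,d),f'(d,c)$ to equal $d$, and one must iterate and reapply minimality symmetrically on both sides until $f'(c,d)=f'(d,c)=d$ holds simultaneously.
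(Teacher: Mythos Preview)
This lemma is not proved in the present paper: it is quoted verbatim from \cite{Bulatov20:graph} (Corollary~25, Lemmas~28 and~32), so there is no in-paper argument to compare against. What follows is an assessment of your sketch on its own merits.

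Your verification arguments are essentially right. For the semilattice case the ``iterate until both sides stabilize'' step can be made precise in two moves: first build $p(x,y)=s(x,f(x,y))$ with $p(c,d)=d$; since $p(d,c)\in b\fac\th\cap\Sg{c,d}$, minimality gives a binary term $u$ with $u(c,p(d,c))=d$; then $q(x,y)=u(x,p(y,x))$ satisfies $q(c,d)=d$ by choice of $u$ and $q(d,c)=u(d,p(c,d))=u(d,d)=d$ by idempotence. The majority case is clean as you wrote it (smoothness makes $\th\cap\Sg{c,d}^2$ a maximal congruence of $\Sg{c,d}$ with the same two-element quotient, so $cd$ is indeed a majority edge). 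The affine verification via minimality is also correct.

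The real gap is exactly the one you flag: the affine existence step. Iterating $F(x)=h(x,a,a)$ and then minimizing $|\Sg{a,d}|$ over the image of an idempotent power $F^N$ does \emph{not} force $F(d)=d$. What you get is that $F$ restricted to $\text{Im}(F^N)$ is a permutation, and that every $d$ in a minimal orbit has $\Sg{a,d}=\Sg{a,F(d)}$; neither conclusion yields a fixed point. This is not a matter of writing the argument more carefully: a permutation of a finite set need have no fixed point, and nothing in ``$F$ is the identity modulo $\th$'' plus ``$\Sg{a,d}=\Sg{a,F(d)}$'' rules that out. In \cite{Bulatov20:graph} the issue is handled at the level of fixing the operation $h$ in Theorem~\ref{the:uniform}: $h$ is chosen so that, in addition to being Mal'tsev on every thick affine edge, the unary map $x\mapsto h(x,y,y)$ is already idempotent for every $y$. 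With that in hand, existence is immediate: pick $d_0\in b\fac\th$ with $|\Sg{a,d_0}|$ minimal and set $d=h(d_0,a,a)$; then $h(d,a,a)=d$ by the idempotence property, and $\Sg{a,d}=\Sg{a,d_0}$ by minimality, so $ad$ is minimal as well. Your sketch is missing precisely this prior normalization of $h$; without it the argument cannot be completed along the lines you propose.
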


The following simple properties of thin edges will be useful. Note that a 
subdirect product of algebras (a relation) is also an algebra, and so edges 
and thin edges can be defined for relations as well.

\begin{prop}[Proposition~8, \cite{Bulatov20:maximal}]\label{pro:very-good-operation}
For every $\zA\in\cV$ and for any $a,b\in\zA$ either $a=a\cdot b$ or 
the pair $(a,a\cdot b)$ is a thin semilattice edge.
\end{prop}

Items (1) and (2) of the following lemma are Lemma~11 from \cite{Bulatov20:maximal}, and item (3) follows from the definitions.

\begin{lemma}[Lemma~11, \cite{Bulatov20:maximal}]%
\label{lem:thin-properties}
\begin{itemize}
\item[(1)] 
Let $\zA\in\cV$, $\oa\ob$ be a thin edge in $\zA\fac\th$, and $a\in\oa$. 
Then there is $b\in\ob$ such that $ab$ is a thin edge in $\zA$
of the same type.
\item[(2)] 
Let $\zA\in\cV$ and $ab$ be a thin edge. Then $ab$
is a thin edge in any subalgebra of $\zA$ containing $a,b$, and $a\fac\th b\fac\th$
is a thin edge of the same type in $\zA\fac\th$ for any congruence $\th$ not containing $(a,b)$.\footnote{That $a\fac\th b\fac\th$ has the same type as $ab$ follows from the proof of Lemma~11 from \cite{Bulatov20:maximal}.} 
\item[(3)] Let $\zA\in\cV$ and $\zB$ its subalgebra. Then every thin edge of $\zB$ is a thin edge of $\zA$ of the same type.
\end{itemize}
\end{lemma} 

We will need operations that act in a specific way on pairs of thin
edges. 

\begin{lemma}[Lemma~36, \cite{Bulatov20:graph}, Lemma~9, \cite{Bulatov20:maximal}]%
\label{lem:op-s-on-affine}
\begin{itemize}
\item[(1)] 
Let $ab$ be a thin majority edge of an algebra $\zA\in\cV$. There is a term
operation $t_{ab}$ such that $t_{ab}(a,b)=b$ and $t_{ab}
(c,d)\eqc\eta c$ for all affine edges $cd$ of all $\zB\in\cV$, where the
type of $cd$ is witnessed by the congruence $\eta$.
\item[(2)] 
Let $ab$ be a thin affine edge of an algebra $\zA\in\cV$. There is a term
operation $h_{ab}$ such that $h_{ab}(a,a,b)=b$ and
$h_{ab}(c,d,d)\eqc\eta c$ for all affine edges $cd$ of all
$\zB\in\cV$, where the type of $cd$ is witnessed by the congruence $\eta$.
Moreover, $h_{ab}(x,c',d')$ is a permutation of $\Sg{c,d}\fac\eta$
for any $c',d'\in\Sg{c,d}$.
\item[(3)] Let $ab$ and $cd$ be thin edges in $\zA_1,\zA_2\in\cV$.
If they have different types there is a binary term operation $p$ such that
$p(b,a)=b$, $p(c,d)=d$. If both edges are affine then there is a term operation
$h'$ such that $h'(a,a,b)=b$ and $h'(d,c,c)=d$.
\end{itemize}
\end{lemma}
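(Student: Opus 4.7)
The plan is to construct the required terms by starting from the uniformly-chosen operations $g$ and $h$ of Theorem~\ref{the:uniform} and polishing them by a finite iterative procedure, exploiting that $\cA$ contains only finitely many affine-edge module rings.

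For part~(1), property~(*) applied to $g$ yields a unary term $u$ with $u(a,g(a,b,b))=b$, so $t_0(x,y):=u(x,g(x,y,y))$ satisfies $t_0(a,b)=b$. On any affine edge $cd$ of $\zA'\in\cA$, the algebra $\Sg{c,d}\fac\eta$ is polynomially a module, and any idempotent binary term acts on it as $\lambda x+(1-\lambda)y$ for some ring element $\lambda_{cd}$. Iterating $t_{k+1}(x,y):=t_k(x,t_k(x,y))$ squares the coefficient $1-\lambda_{cd}$ on each affine edge while preserving $t_k(a,b)=b$ by induction. Since only finitely many finite rings arise across $\cA$, a common exponent $N$ forces every such coefficient to become an idempotent; a further normalization step, using the Mal'tsev action of $h$ on affine edges from Theorem~\ref{the:uniform}, cancels the surviving idempotent and produces $t_{ab}$. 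For part~(2), the parallel argument builds a ternary starter $h_0$ from~(**) with $h_0(a,a,b)=b$, then iterates along the first argument to drive its restriction to each affine edge $cd$ toward $\mu_1 x+\mu_2 y+\mu_3 z$ with $\mu_2+\mu_3\eqc\eta 0$; the ``moreover'' clause follows automatically, since once $\mu_1$ is a unit in the relevant finite ring, $x\mapsto h_{ab}(x,c',d')$ is an affine bijection on $\Sg{c,d}\fac\eta$.

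Part~(3) is handled by assembling the operations from~(1) and~(2). When both edges are affine, the Mal'tsev-type properties of $h$, together with the definitions of thin affine edges (which already yield $h(b,a,a)=b$ and $h(d,c,c)=d$), give the required $h'$ directly, possibly after a short symmetrization using Lemma~\ref{lem:thin-properties}. For edges of distinct types, one combines the single-type operation on each side---$\cdot$ from Lemma~\ref{lem:good-operation} for semilattice edges, $t_{ab}$ from~(1) for majority edges, $h_{ab}$ from~(2) for affine edges---and verifies that the equation established on one edge is preserved by compositions needed to enforce the equation on the other.

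The main obstacle is the final normalization in part~(1): passing from a stabilized idempotent coefficient $e_{cd}\ne 0$ to zero, uniformly over the finitely many modules arising from $\cA$. This forces a careful interplay between the three Theorem~\ref{the:uniform} operations, and is the place where the hypothesis that $\cA$ omits type~\one\ is essential, since without it the module-theoretic picture on affine traces would not apply to every non-trivial prime quotient we must control.
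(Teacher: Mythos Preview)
This lemma is not proved in the present paper; it is quoted from \cite{Bulatov20:graph} (Lemmas~35 and~36 there). So there is no in-paper proof to compare your attempt against, and I can only assess the proposal on its own terms.

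The step you yourself flag as the ``main obstacle'' is a genuine gap, not a routine detail. Your iteration $t_{k+1}(x,y)=t_k(x,t_k(x,y))$ correctly squares the affine-edge coefficient $1-\lambda_{cd}$ while keeping $t_k(a,b)=b$, so after finitely many steps that coefficient becomes an idempotent $e_{cd}$. But the promised ``normalization step using $h$'' is never supplied, and the natural candidates fail: for instance $h(t_N(x,y),t_N(x,y),x)$ does return $c$ on every affine edge, yet on the thin majority edge it evaluates to $h(b,b,a)$, over which you have no control, so $t(a,b)=b$ is lost. Every composition I can see that kills $e_{cd}$ uniformly either permutes the arguments of $t_N$ (and you know nothing about $t_N(b,a)$) or applies $h$ at $(a,b)$ to something other than $(b,b,b)$. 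The same difficulty recurs verbatim in your sketch of~(2). A more workable route is to exploit that conditions~(*) and~(**) quantify over \emph{all} terms satisfying the majority (respectively minority) condition: one first manufactures a $g'$ (resp.\ $h'$) whose restriction to every affine edge already has the desired shape, and only then invokes~(*) (resp.~(**)) to produce the term with the correct value at $(a,b)$. Your order of operations---fix a starter from~(*), then try to repair it on affine edges---is precisely what creates the obstruction.

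Part~(3) also needs attention. For two thin affine edges you say the definition ``already yields $h(b,a,a)=b$ and $h(d,c,c)=d$,'' but the statement asks for $h'(a,a,b)=b$, a different identity on the $ab$ side; passing from $h(b,a,a)=b$ to some $h'$ with $h'(a,a,b)=b$ genuinely requires~(**), not just the fixed $h$. For edges of distinct types, ``combine and verify'' is not yet an argument: you must check, for each of the six ordered type pairs, that the composition used to enforce $p(c,d)=d$ on one side does not disturb $p(b,a)=b$ on the other, and these verifications are type-specific rather than uniform.
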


\begin{proof}
Item (3) is items (1) and (2) of Lemma~9 from \cite{Bulatov20:maximal}. Lemma~36 from \cite{Bulatov20:graph} proves items (1) and (2) for smooth algebras from $\cK$. We generalize this statement for non-smooth algebras from $\cV$. The proof is similar to that of Lemma~36 from~\cite{Bulatov20:graph}. 

(1) Let $b'=g(a,b,b)$. By the definition of thin majority edges 
$b\in\Sgg\zA{a,b'}$ and there is a binary term operation $r$ such that
$b=r(a,b')$. By Theorem~\ref{the:uniform} $g(x,y,y)$ is the first projection on 
$\Sg{c,d}\fac{\eta}$ for any affine edge $cd$ of any algebra $\zB\in\cV$, where the congruence $\eta$ witnesses this fact. Let $t_{ab}(x,y)=r(x,g(x,y,y))$. Then
\begin{eqnarray*}
t_{ab}(a,b) &=& r(a,g(a,b,b))=b,\\
t_{ab}(c,d) &=& r(c,g(c,d,d))\eqc{\eta}c.
\end{eqnarray*}
This means that $t_{ab}$ satisfies the required conditions.

(2) Let $b'=h(a,a,b)$. By the definition of thin affine edges 
$b\in\Sgg\zA{a,b'}$ and there is a binary term operation $r$ such that
$b=r(a,b')$. By Theorem~\ref{the:uniform} $h(x,y,y)$ is the first projection on 
$\Sg{c,d}\fac{\eta}$ for any affine edge $cd$ of any algebra $\zB\in\cV$, where the congruence $\eta$ witnesses this fact. Let $h_{ab}(x,y,z)=r(x,h(x,y,z))$. Then
\begin{eqnarray*}
h_{ab}(a,a,b) &=& r(a,h(a,a,b))=b,\\
h_{ab}(c,d,d) &=& r(c,h(c,d,d))\eqc{\eta}c.
\end{eqnarray*}
This means that $h_{ab}$ satisfies the required conditions. To prove the second statement let $c',d'\in\Sg{c,d}$. Since $\zC=\Sg{c,d}\fac{\eta}$ is a module, in particular, it is an Abelian algebra and $h_{ab}(x,c^*,c^*)=x$ for all $c^*\in\zC$, the second result follows.
\end{proof}

\subsection{Maximality}\label{sec:maximality}

Let $\zA\in\cV$. A \emph{path} in $\zA$ is a sequence 
$a_0,a_1\zd a_k$ such 
that $a_{i-1}=a_i$ or $a_{i-1}a_i$ is a thin edge for all $i\in[k]$ (note that thin edges 
are always assumed to be directed).
We will distinguish paths of several types depending on what types of
edges are allowed. If $a_{i-1}\le a_i$ for all $i\in[k]$ then the path is
called a \emph{semilattice} or \emph{s-path}. If for every $i\in[k]$
either $a_{i-1}\le a_i$ or $a_{i-1}a_i$ is a thin
affine edge then the path is called \emph{affine-semilattice} or
\emph{as-path}. 
The path is called \emph{asm-path} when all types of edges are allowed. If 
there is a path $a=a_0,a_1\zd a_k=b$ which is arbitrary (semilattice, 
affine-semilattice) then $a$ is said to be \emph{asm-connected} (or 
\emph{s-connected}, or \emph{as-connected}) to $b$. We will also 
say that $a$ is \emph{connected} to $b$ if it is asm-connected. 
We denote this by $a\sqq^{asm} b$ (for asm-connectivity), $a\sqq b$, 
and $a\sqq^{as} b$ for s-, and as-connectivity, respectively. 

Let $\cG_s(\zA)$ ($\cG_{as}(\zA),\cG_{asm}(\zA)$) denote the digraph 
whose nodes are the elements of $\zA$, and the edges are the thin 
semilattice edges (thin semilattice and affine edges, arbitrary thin edges, 
respectively). The strongly connected component of $\cG_s(\zA)$ 
containing $a\in\zA$ will be denoted by $\se a$. The set of strongly 
connected components of $\cG_s(\zA)$ are ordered in the natural 
way (if $a\le b$ then $\se a\le \se b$), the elements belonging to 
maximal ones will be called \emph{maximal}, and the set of all 
maximal elements from $\zA$ will be denoted by $\mmax(\zA)$. 

The strongly connected component of $\cG_{as}(\zA)$ containing 
$a\in\zA$ will be denoted by $\as(a)$. A maximal strongly connected 
component of this graph is called an
\emph{as-component}, an element from an as-component
is called \emph{as-maximal}, and the set of all as-maximal elements is
denoted by $\amax(\zA)$. 

Finally, the strongly connected component of $\cG_{asm}(\zA)$ containing 
$a\in\zA$ will be denoted by $\asm(a)$. A maximal strongly connected 
component of $\cG_{asm}(\zA)$ is called a \emph{universally maximal 
component} (or \emph{u-maximal component} for short), an element 
from a u-maximal component is called \emph{u-maximal}, and the set of all 
u-maximal elements is denoted by $\umax(\zA)$. 

Sometimes we use the notation $\cG_s(B),\cG_{as}(B),\cG_{asm}(B)$ and $\mmax(B)$, $\amax(B)$, $\umax(B)$ for a subset $B$ of $\zA$ that is not necessarily a subalgebra. In this case $\cG_s(B)$, $\cG_{as}(B)$, $\cG_{asm}(B)$ denote the subgraphs of $\cG_s(\zA),\cG_{as}(\zA),\cG_{asm}(\zA)$, respectively, induced by $B$, and $\mmax(B),\amax(B),\umax(B)$ refer to elements of the maximal strongly connected components of those subgraphs. 

Alternatively, maximal, as-maximal, and u-maximal elements 
can be characterized as follows: an element $a\in\zA$ is 
maximal (as-maximal, u-maximal) if for every $b\in\zA$ such 
that $a\sqq b$ ($a\sqq^{as}b$, $a\sqq^{asm} b$)
it also holds that $b\sqq a$ ($b\sqq^{as}a$, $b\sqq^{asm}a$). 
Sometimes it will be convenient to specify what the algebra is, 
in which we consider maximal components, as-components, 
or u-maximal components, and the corresponding connectivity. 
In such cases we will specify it by writing $\see \zA a$, 
$\as_\zA(a)$, or $\asm_\zA(a)$. For connectivity we will use
$a\sqq_\zA b$, $a\sqq_\zA^{as}b$, and $a\sqq_\zA^{asm}b$.

As a straightforward implication of Lemma~\ref{lem:thin-properties}(3) we have the following statement.

\begin{corollary}\label{cor:subalgebra-path}
Let $\zA\in\cV$, $\zB$ its subalgebra, and $a,b\in\zB$. Then if $a\sqq_\zB b$ (or $a\sqq^{as}_\zB b$, $a\sqq^{asm}_\zB b$) then $a\sqq_\zA b$ (respectively, $a\sqq^{as}_\zA b$, $a\sqq^{asm}_\zA b$). Moreover, if $\zB$ contains a maximal component (an as-component) $C$ of $\zA$, then $C$ is a maximal component (as-component) of $\zB$.
\end{corollary}

As the following result shows, the (as-, u-) maximal elements of $\cG_{asm}(\zA)$ are asm-connected to each other.

\begin{prop}[Corollary~22, \cite{Bulatov20:maximal}]%
\label{pro:as-connectivity}
Let $\zA\in\cV$. Then
any $a,b\in\mmax(\zA)$ (or $a,b\in\amax(\zA)$, or $a,b\in\umax(\zA)$) are 
connected in $\cG_{asm}(\zA)$ with a directed path.
\end{prop}

Since for every $a\in\zA$ there is a maximal $a'\in\zA$ such that $a\sqq a'$, 
Proposition~\ref{pro:as-connectivity} implies that there is only one u-maximal 
component. Moreover, Proposition~\ref{pro:as-connectivity} implies the following connection between maximal, as-maximal, and u-maximal elements.

\begin{corollary}\label{cor:amax-umax}
Let $\zA\in\cV$. Then $\mmax(\zA),\amax(\zA)\sse\umax(\zA)$.
\end{corollary}

U-maximality has an additional useful property, it is somewhat 
hereditary, as it is made precise in the following

\begin{lemma}\label{lem:u-max-congruence}
Let $\zB$ be a subalgebra of $\zA\in\cV$, containing a u-maximal element of $\zA$. 
Then every element u-maximal in $\zB$ is also 
u-maximal in $\zA$. In particular, if $\al$ is a congruence of $\zA$ and $\zB$ 
is a u-maximal $\al$-block, that is, $\zB$ is a u-maximal element in 
$\zA\fac\al$, then $\umax(\zB)\sse\umax(\zA)$. 
\end{lemma}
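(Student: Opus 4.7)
The plan is to first show that $\cG_{asm}(\zB)$ is exactly the subgraph of $\cG_{asm}(\zA)$ induced on the universe $B$. For any $S\sse B$ the equality $\Sgg{\zB}{S}=\Sgg{\zA}{S}$ holds (any subuniverse of $\zA$ contained in $B$ is automatically a subuniverse of $\zB$ and vice versa), so every equation and every subuniverse-membership statement appearing in the definitions of thin semilattice, thin majority, and thin affine edges evaluates identically in the two algebras when all elements involved lie in $B$. Therefore directed asm-paths inside $\zB$ are precisely the directed asm-paths inside $\zA$ all of whose vertices lie in $B$. Proposition~\ref{pro:as-connectivity}(2) then says that the u-maximal elements of either algebra form a single strongly connected component of $\cG_{asm}$, which is the unique ``sink'' in the condensation DAG: no directed asm-path can leave this sink.

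For the main statement, let $b\in\umax(\zB)$ and pick the assumed $a\in B\cap\umax(\zA)$. Because $\cG_{asm}(\zB)$ has a unique sink SCC, there is a directed asm-path in $\zB$ from $a$ to some $a'\in\umax(\zB)$; Proposition~\ref{pro:as-connectivity}(2) applied inside $\zB$ supplies a directed asm-path from $a'$ to $b$ in $\zB$. Concatenating gives $a\sqq^{asm}_\zB b$, which by the first paragraph is also $a\sqq^{asm}_\zA b$. Since $a$ lies in the unique u-maximal SCC of $\cG_{asm}(\zA)$ and no asm-path can leave that SCC, $b$ lies in it as well, so $b\in\umax(\zA)$.

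For the ``in particular'' clause the strategy is to reduce to the main statement by producing an element of $B$ that is u-maximal in $\zA$. Using Lemma~\ref{lem:thin-properties}, u-maximality descends to quotients: thin edges project by part~(2), and any asm-path out of $a\fac\al$ in $\zA\fac\al$ lifts by part~(1) to an asm-path out of $a$ in $\zA$ whose endpoint, being reachable from the u-maximal $a$, lies in the unique sink SCC of $\zA$ and therefore returns to $a$ in $\zA$; projecting this return path gives a return to $a\fac\al$ in $\zA\fac\al$. Hence $a\fac\al\in\umax(\zA\fac\al)$ for every $a\in\umax(\zA)$. Since $\zB$ is also a u-maximal element of $\zA\fac\al$, Proposition~\ref{pro:as-connectivity}(2) supplies a directed asm-path from $a\fac\al$ to $\zB$ in the quotient, which lifts by Lemma~\ref{lem:thin-properties}(1) to an asm-path in $\zA$ from $a$ to some $b\in B$. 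Then $b$ is reachable from $a\in\umax(\zA)$, so $b\in B\cap\umax(\zA)$, and the first part of the lemma completes the argument.

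The step I expect to require the most care is the identification $\cG_{asm}(\zB)=\cG_{asm}(\zA)\cap(B\tm B)$, because the definitions of thin majority and thin affine edges quantify universally over term operations satisfying the majority/minority condition and make subuniverse-membership assertions; one has to verify that each of these conditions is stable under passing between $\zB$ and $\zA$. Once that graph-theoretic identification and the descent of u-maximality to quotients are in place, the rest is routine bookkeeping with sinks of condensation DAGs.
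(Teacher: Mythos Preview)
Your proof is correct and follows essentially the same route as the paper. The paper's argument is terser: for the main part it takes $a\in\zB\cap\umax(\zA)$ and $b\in\umax(\zB)$, observes that any $c$ with $b\sqq^{asm}c$ satisfies $c\sqq^{asm}a$ (since $a$ is u-maximal in $\zA$), and that $a\sqq^{asm}b$ (since $b$ is u-maximal in $\zB$ and $a\in\zB$), hence $c\sqq^{asm}b$. The paper uses the passage from asm-paths in $\zB$ to asm-paths in $\zA$ implicitly, whereas you spell out the identification $\cG_{asm}(\zB)=\cG_{asm}(\zA)\cap(B\tm B)$ explicitly; your justification via the stability of $\Sg{\cdot}$ and term operations is correct and is indeed the content behind the paper's tacit step. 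For the ``in particular'' clause the paper is slightly more direct: it picks any $b\in\umax(\zA)$, uses that from $b\fac\al$ there is an asm-path in $\zA\fac\al$ to the u-maximal block $\zB$ (no need to first verify $b\fac\al$ is itself u-maximal), lifts via Lemma~\ref{lem:thin-properties}(1) to get $a'\in\zB$ with $b\sqq^{asm}a'$, and concludes $a'\in\umax(\zA)$. Your detour through ``u-maximality descends to quotients'' is correct but unnecessary.
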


\begin{proof}
Let $a\in \zB$ be an element u-maximal in $\zA$, let $b\in\umax(\zB)$. For any 
$c\in\zA$ with $b\sqq^{asm}_\zA c$ we also have $c\sqq^{asm}_\zA a$. Finally, since 
$b\in\umax(\zB)$ and $a\in\zB$, we have $a\sqq^{asm}_\zA b$, and therefore $c\sqq^{as}_\zA b$. For the second 
part of the lemma we need to find a u-maximal element in $\zB$. Let 
$b\in\umax(\zA)$. Then as $\zB$ is u-maximal in $\zA\fac\al$ applying 
Lemma~\ref{lem:thin-properties}(1) we get that there is
 $a'\in\zB$ such that $b\sqq^{asm}_\zA a'$. Clearly, $a'\in\umax(\zA)$.
\end{proof}

\begin{lemma}[The Maximality Lemma, Lemma~15, 
Corollaries~16,17, \cite{Bulatov20:maximal}]\label{lem:to-max}
Let $\rel$ be a subdirect product of $\vc\zA n\in\cV$, and $I\sse[n]$.
\begin{itemize}
\item[(1)] For any $\ba\in\rel$, $\ba^*=\pr_I\ba$, $\bb\in\pr_I\rel$ 
such that $\ba^*\bb$ is a thin edge, there is 
$\bb'\in\rel$, $\pr_I\bb'=\bb$, such that $\ba\bb'$ is a thin edge of the same 
type. 
\item[(2)] 
If $\ba\bb$ is a thin edge in $\rel$ then $\pr_I\ba\,\pr_I\bb$
is a thin edge in $\pr_I\rel$ of the same type (including the 
possibility that $\pr_I\ba=\pr_I\bb$).
\item[(3)] 
For any $\ba\in\rel$, and an s- (as-, asm-) path 
$\vc\bb k\in\pr_I\rel$ with $\pr_I\ba=\bb_1$, there is an
s- (as-, asm-) path $\vc{\bb'}k\in\rel$ such that 
$\bb'_1=\ba$ and $\pr_I\bb'_k=\bb_k$.
\item[(4)] 
If $\vc\ba k$ is an s- (as-, asm-) path in $\rel$, then 
$\pr_I\ba_1\zd\pr_I\ba_k$ is an s- (as-, asm-) path in $\pr_I\rel$.
\item[(5)] 
For any maximal (as-maximal, u-maximal) (in $\pr_I\rel$) element 
$\bb\in\pr_I\rel$, there is $\bb'\in\rel$ which is maximal (as-maximal, 
u-maximal) in $\rel$ and such that $\pr_I\bb'=\bb$. In particular, 
$\pr_{[n]-I}\bb'$ is a maximal (as-maximal, u-maximal) in 
$\pr_{[n]-I}\rel$.
\item[(6)] 
If $\ba$ is a maximal (as-maximal, u-maximal) in $\rel$, then 
$\pr_I\ba$ is maximal (as-maximal, u-maximal) in $\pr_I\rel$.
\end{itemize}
\end{lemma}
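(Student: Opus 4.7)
The plan is to deduce all six items from the earlier properties of thin edges under quotient maps. The key observation is that the coordinate projection $\pr_I\colon\rel\to\pr_I\rel$ is a surjective algebra homomorphism, so $\pr_I\rel$ is isomorphic to the quotient $\rel\fac{\Ker\pr_I}$. Under this identification, item~(1) is a direct instance of Lemma~\ref{lem:thin-properties}(1) (lifting a thin edge through a quotient), and item~(2) is an instance of Lemma~\ref{lem:thin-properties}(2) (the image of a thin edge under a quotient is a thin edge of the same type, because the defining equations for semilattice, majority, and affine thin edges descend to the quotient). The parenthetical ``including the possibility that $\pr_I\ba=\pr_I\bb$'' is only the degenerate case of a loop, which is trivially a thin edge of any type by idempotency.

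Items~(3) and~(4) follow by straightforward induction on the path length $k$. For~(3), having constructed $\bb'_i\in\rel$ with $\pr_I\bb'_i=\bb_i$, the given path supplies a thin edge $\bb_i\bb_{i+1}$ of the prescribed type in $\pr_I\rel$; applying~(1) lifts it to a thin edge $\bb'_i\bb'_{i+1}$ in $\rel$ of the same type with $\pr_I\bb'_{i+1}=\bb_{i+1}$, and concatenation delivers the desired s-, as- or asm-path. Item~(4) is analogous but simpler, applying~(2) edge by edge.

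For~(5), let $\bb$ be maximal (as-maximal, u-maximal) in $\pr_I\rel$ and write $\sqq^\star$ for the corresponding connectivity. Pick any $\bc\in\rel$ with $\pr_I\bc=\bb$; since $\rel$ is finite, some maximal (as-maximal, u-maximal) $\ba\in\rel$ satisfies $\bc\sqq^\star\ba$. By~(4), $\bb=\pr_I\bc\sqq^\star\pr_I\ba$, and the maximality of $\bb$ forces $\pr_I\ba\sqq^\star\bb$. Lifting this return path via~(3) from $\ba$ yields $\bb'\in\rel$ with $\pr_I\bb'=\bb$ and $\ba\sqq^\star\bb'$; since $\ba$ is maximal, $\bb'$ lies in the same strongly connected component as $\ba$ and is therefore itself maximal (as-maximal, u-maximal). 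The ``In particular'' clause is then~(6) applied to the index set $[n]-I$.

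For~(6), if $\ba$ is maximal in $\rel$ and $\pr_I\ba\sqq^\star\bc$ in $\pr_I\rel$, the witnessing path lifts via~(3) to a path in $\rel$ from $\ba$ to some $\bc'$ with $\pr_I\bc'=\bc$; the maximality of $\ba$ gives $\bc'\sqq^\star\ba$, and~(4) applied to this return path yields $\bc\sqq^\star\pr_I\ba$. There is no genuine obstacle beyond bookkeeping: since Lemma~\ref{lem:thin-properties} is uniform in the type of thin edge, the inductions in~(3)--(4) and the round-trip arguments in~(5)--(6) go through identically for the s-, as-, and asm-variants of maximality.
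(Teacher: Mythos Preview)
Your argument is correct. Note, however, that the paper does not supply its own proof of this lemma: it is stated with the citation ``Lemma~17, Corollaries~18,19, \cite{Bulatov20:maximal}'' and imported without proof, so there is no in-paper argument to compare against. Your reduction of items~(1)--(2) to Lemma~\ref{lem:thin-properties} via the observation that $\pr_I$ is a quotient map by its kernel congruence, followed by the straightforward inductions for~(3)--(4) and the lift--project round trips for~(5)--(6), is the natural way to derive the statement from Lemma~\ref{lem:thin-properties} and is almost certainly how the cited source proceeds as well.
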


We will need the following statement that easily follows from Lemma~\ref{lem:to-max}(3).

\begin{corollary}\label{cor:as-extension}
Let $\rel$ be a subdirect product of $\vc\zA n\in\cV$, $I\sse[n]$, $B,C$ as-components of $\pr_I\rel$ and $\pr_{[n]-I}\rel$, respectively, such that $\rel'=\rel\cap(B\tm C)\ne\eps$. Then $\rel'$ is a subdirect product of $B$ and $C$.
\end{corollary}

The following lemma considers a special case of maximal components (as well as as- and u-components) in subdirect products, and is straightforward.

\begin{lemma}[Lemma~18, \cite{Bulatov20:maximal}]\label{lem:as-product}
Let $\rel$ be a subdirect product of $\zA_1,\zA_2\in\cV$, $B,C$ maximal components (as-components, u-components) of 
$\zA_1,\zA_2$, respectively, and $B\tm C\sse\rel$. 
Then $B\tm C$ is a maximal component (as-component, u-component) 
of $\rel$.
\end{lemma}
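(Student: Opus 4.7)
The plan is to show first that $B\tm C$ is strongly connected in the relevant graph on $\rel$ --- namely $\cG_s(\rel)$, $\cG_{as}(\rel)$, or $\cG_{asm}(\rel)$ for the three variants --- and second that any element of $\rel$ connected from $B\tm C$ by a directed thin path of the appropriate type already lies in $B\tm C$. Together these force $B\tm C$ to coincide with a single maximal strongly connected component of the corresponding graph, which is exactly the conclusion. The three variants are treated in parallel, differing only in which types of thin edges are permitted along the paths.

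For strong connectivity, fix $(b,c),(b',c')\in B\tm C$. Since $B$ is a strongly connected component of the relevant graph on $\zA_1$, there is a directed thin path $b=b_0,b_1\zd b_k=b'$ of the allowed type inside $B$; analogously, pick a path $c=c_0,c_1\zd c_\ell=c'$ inside $C$. All vertices of the concatenated sequence
$$
(b_0,c),(b_1,c)\zd(b_k,c)=(b',c_0),(b',c_1)\zd(b',c_\ell)
$$
lie in $B\tm C\sse\rel$, so it remains only to check that each consecutive pair is a thin edge of the same type in $\rel$. For a thin semilattice edge $b_{i-1}b_i$ witnessed by a binary operation $f$ of $\zA_1$, component-wise idempotency gives $f((b_{i-1},c),(b_i,c))=(b_i,c)=f((b_i,c),(b_{i-1},c))$. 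For a thin majority edge one verifies that for every ternary term $g'$ satisfying the majority condition, the subalgebra of $\rel$ generated by $(b_{i-1},c)$ and $g'((b_{i-1},c),(b_i,c),(b_i,c))$ projects in the first coordinate onto $\Sg{b_{i-1},g'(b_{i-1},b_i,b_i)}\ni b_i$, while its second coordinate is constantly $c$ by idempotency; hence $(b_i,c)$ lies in this subalgebra, and the analogous checks for $g'(b,a,b)$ and $g'(b,b,a)$ go through identically. The thin affine case is treated the same way, using the identity $h((b_1,c),(b,c),(b,c))=(b_1,c)$ together with the subalgebra condition on any minority term $h'$.

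For the maximality step, suppose $(a,d)\in\rel$ is connected from some $(b,c)\in B\tm C$ by a directed path of the appropriate type inside $\rel$. Applying Lemma~\ref{lem:to-max}(4) with $I=\{1\}$ and then with $I=\{2\}$ yields paths of the same type from $b$ to $a$ in $\pr_1\rel=\zA_1$ and from $c$ to $d$ in $\pr_2\rel=\zA_2$. Because $B$ and $C$ are maximal components, the elements $b,c$ are maximal (as-maximal, u-maximal), so $a\in B$ and $d\in C$; thus $(a,d)\in B\tm C$, and the first step produces a directed path of the same type from $(a,d)$ back to $(b,c)$. Consequently $B\tm C$ is a single strongly connected component of the relevant graph on $\rel$ whose elements are all maximal (respectively, as-maximal, u-maximal) in $\rel$.

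The main technical obstacle is the verification in the first step that the thin edge property is preserved in $\rel$ when the second coordinate is held fixed at $c\in C$. Thin majority and thin affine edges are defined by subalgebra-generation conditions quantified over every term satisfying the majority (respectively minority) condition, so one has to confirm both that the subalgebra of $\rel$ generated by the two candidate generators has the correct first-coordinate projection into $\zA_1$ and that it is pinned to the constant $c$ in the second, each of which follows from the idempotency of the operations of $\rel$. Beyond this bookkeeping, the argument is a direct combination of the hypothesis $B\tm C\sse\rel$ with Lemma~\ref{lem:to-max}(4).
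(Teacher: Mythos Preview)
Your argument is correct. The paper does not supply its own proof of this lemma: it is quoted from \cite{Bulatov20:maximal} and introduced with the remark that it ``is straightforward,'' so there is nothing to compare against beyond that description. Your two-step approach --- verifying directly from the definitions that holding one coordinate fixed at an idempotent value preserves each kind of thin edge, and then using Lemma~\ref{lem:to-max}(4) to project any outgoing path back to the factors --- is exactly the kind of direct check the paper has in mind. The only mildly delicate point is the one you flag yourself: for thin majority and thin affine edges one must confirm that the subalgebra of $\rel$ generated by the two relevant tuples has first projection equal to the corresponding subalgebra of $\zA_1$ and second projection $\{c\}$, which follows since projections commute with subalgebra generation and all term operations are idempotent.
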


Lemma~\ref{lem:thin-properties} applied to paths implies

\begin{lemma}\label{lem:factor-as}
Let $\zA\in\cV$, let $\th$ be a congruence of $\zA$.
\begin{itemize}
\item[(1)] 
If $C$ is a maximal component (as-component, u-component) of $\zA$, then $C\fac\th$ is a maximal component (as-component, u-component) of $\zA\fac\th$.
\item[(2)] 
If $C$ is a maximal component (as-component, u-component) of $\zA\fac\th$, then every $\oa\in C$ contains a maximal (as-maximal, u-maximal) element of $\zA$.
\end{itemize}
\end{lemma}

We complete this section with an auxiliary statement that will be needed
later.

\begin{lemma}\label{lem:as-type-2}
Let $\zA\in\cV$, $\al\prec\beta$, $\al,\beta\in\Con(\zA)$, let $B$ be a $\beta$-block and 
$\typ(\al,\beta)=\two$. Then $B\fac\al$ is term equivalent to a module. 
In particular,
every pair of elements of $B\fac\al$ is a thin affine edge in $\zA\fac\al$.
\end{lemma}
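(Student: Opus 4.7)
The proof has two parts: showing $B\fac\al$ is term equivalent to (the idempotent reduct of) a module, then deducing that every pair of its elements is a thin affine edge in $\zA\fac\al$.

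First, since $\zA$ is idempotent and $\beta$ is a congruence, $B$ is closed under every term operation: for any term $t$ and $a_1,\ldots,a_n\in B$, the element $t(a_1,\ldots,a_n)$ is $\beta$-equivalent to $t(a_1,\ldots,a_1)=a_1\in B$. Hence $B$ is a subalgebra and $B\fac\al$ embeds into $\zA\fac\al$. By Lemma~\ref{lem:traces}(1), every $(\al,\beta)$-trace, taken modulo $\al$, is polynomially equivalent to a $1$-dimensional vector space. The classical type-\two\ structure theorem from tame congruence theory (\cite{Hobby88:structure}, Chapter~4) extends this trace-level structure to a polynomial equivalence of $B\fac\al$ with a finite module $\M$ over a finite ring $R$; the extension uses the abelianness of $(\al,\beta)$ together with the transport of module operations between traces via the polynomial isomorphisms of Lemma~\ref{lem:minimal-sets}(1)--(2). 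Since $\zA$ is idempotent, every polynomial of $B\fac\al$ with parameters in $B\fac\al$ is obtained by substituting elements of $B\fac\al$ into a term of $\zA$; moreover, the idempotent reduct of $\M$ is generated as a clone by its Mal'tsev operation $x-y+z$, which on $B\fac\al$ is realized by the operation $h$ from Theorem~\ref{the:uniform}. This promotes the polynomial equivalence to a term equivalence with the idempotent reduct of $\M$.

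For the second claim, fix distinct $\bar a,\bar b\in B\fac\al$, viewed in $\M$. First, $h(\bar b,\bar a,\bar a)=\bar b-\bar a+\bar a=\bar b$, so the first thin-affine-edge condition holds. Let $h'$ be any term of $\zA$ satisfying the minority condition. Its restriction to $B\fac\al$ is an affine operation $h'(x,y,z)=rx+sy+tz$ over $R$ with $r+s+t=1$. The requirement that $h'$ be Mal'tsev on every thick affine edge---equivalently, on every simple quotient of $\M$---forces $r,t\equiv 1$ and $s\equiv -1$ modulo each maximal ideal of $R$. Hence $1-t$ lies in the Jacobson radical of $R$ and, $R$ being finite, is nilpotent, so $t$ is a unit in $R$. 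Setting $c=h'(\bar a,\bar a,\bar b)=(1-t)\bar a+t\bar b$, so that $c-\bar a=t(\bar b-\bar a)$, we obtain
$$\Sg{\bar a,c}=\bar a+Rt(\bar b-\bar a)=\bar a+R(\bar b-\bar a)=\Sg{\bar a,\bar b}\ni\bar b,$$
which is the second thin-affine-edge condition.

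The main obstacle is the extension from the trace-level $1$-dimensional vector-space structure of Lemma~\ref{lem:traces}(1) to a block-level module structure on $B\fac\al$, and the correct use of idempotence to upgrade the resulting polynomial equivalence to a term equivalence with the idempotent reduct of $\M$. Both rely on the abelian character of type-\two\ intervals, the technical heart of the Hobby--McKenzie theory.
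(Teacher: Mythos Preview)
Your argument has a real gap at the crucial step. You invoke a ``classical type-\two\ structure theorem from \cite{Hobby88:structure}, Chapter~4'' that supposedly extends the trace-level vector-space structure to a polynomial equivalence of the whole block $B\fac\al$ with a module. No such result appears in Chapter~4: the structure theorems there concern $(\al,\beta)$-minimal sets and their traces, not entire $\beta$-blocks. An abelian prime quotient of type~\two\ does not by itself force $B\fac\al$ to be affine; one needs a Mal'tsev operation on $B\fac\al$, and producing such a term is exactly where the standing hypotheses ``idempotent'' and ``omits type~\one'' do their work. Your sketch (``transport of module operations between traces via polynomial isomorphisms'') does not supply this. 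A second gap compounds the first: even granting polynomial equivalence with a module $\M$, promoting to \emph{term} equivalence requires a term of $\zA$ realizing $x-y+z$ on $B\fac\al$. You assert that $h$ from Theorem~\ref{the:uniform} does this, but that theorem only guarantees $h$ is the affine operation on thick affine edges $\Sg{a,b}\fac\th$, not on an arbitrary type-\two\ block; indeed your own Jacobson-radical computation applied to $h$ in place of $h'$ would show only that the coefficient of $x$ in $h$ is a unit, not that it equals~$1$.

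The paper's proof is much shorter and bypasses both issues at once: it cites \cite[Theorem~7.11]{Hobby88:structure}, which (using idempotence and the absence of type~\one) directly yields a term operation of $\zA$ that is Mal'tsev on $B\fac\al$. Since $(\al,\beta)$ has type~\two\ the block $B\fac\al$ is abelian, and ``abelian $+$ Mal'tsev term $\Rightarrow$ affine'' finishes the first claim in one line. Your treatment of the ``in particular'' clause via units in $R$ is more explicit than the paper's bare assertion and is essentially correct once the module structure is in hand, but it rests on the part you have not established.
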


\begin{proof}
As $\zA$ is an idempotent algebra that generates a variety omitting type \one, 
and $(\alpha,\beta)$ is a prime interval in $\Con(\zA)$ of type \two, by 
\cite[Theorem~7.12]{Hobby88:structure}  there is a term operation of 
$\zA$ that is Mal'tsev on $B\fac\al$. Since $\beta$ is Abelian on $B\fac\al$, 
we get the result.
\end{proof}

\subsection{Quasi-decomposition and quasi-majority}

We make use of the property of quasi-2-decomposability proved in
\cite{Bulatov20:maximal}.

\begin{theorem}[The Quasi-2-Decomposition Theorem, Corollary~31, 
\cite{Bulatov20:maximal}]\label{the:quasi-2-decomp}
Let\lb $\vc\zA n\in\cV$.
If $\rel$ is a subdirect product of $\zA_1\zd\zA_n$ and tuple $\ba$ is such that
$\pr_J\ba\in\pr_J\rel$ for any $J\sse[n]$, $|J|=2$, then there is a tuple $\bb\in\rel$ with $\pr_J\ba\sqq^{as}\pr_J\bb$ for any $J\sse[n]$, $|J|=2$. 
\end{theorem}

One useful implication of the Quasi-2-Decomposition 
Theorem~\ref{the:quasi-2-decomp} is the 
existence of a term operation resembling a majority operation. 

\begin{theorem}[Theorem~32, \cite{Bulatov20:maximal}]%
\label{the:pseudo-majority}
There is a term operation $\maj$ 
of $\cV$ such that for any $\zA\in\cV$ and any 
$a,b\in\zA$, it holds $a\sqq^{as}\maj(a,a,b),\maj(a,b,a),\maj(b,a,a)$.

In particular, if $a$ is as-maximal, then $\maj(a,a,b),\maj(a,b,a),\maj(b,a,a)$ 
belong to $\as(a)$.
\end{theorem}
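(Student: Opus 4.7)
My plan is to apply the Quasi-$2$-Decomposition Theorem~\ref{the:quasi-2-decomp} to the ternary subalgebra generated by the three cyclic shifts of $(a,a,b)$. I will first carry out the construction for one fixed algebra and one fixed pair, and then lift it to the whole class $\cA$ by a product trick.

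Fix $\zA\in\cA$ and $a,b\in\zA$, and set $u_1=(a,a,b)$, $u_2=(a,b,a)$, $u_3=(b,a,a)$ and $\rel=\Sg{u_1,u_2,u_3}\sse\zA^3$. The key observation is that for $\ba^*=(a,a,a)$ every two-coordinate projection matches the corresponding projection of some $u_i$, since $\pr_{12}u_1=\pr_{13}u_2=\pr_{23}u_3=(a,a)$; hence $\pr_J\ba^*\in\pr_J\rel$ for every $J\sse[3]$ with $|J|=2$. Taking $X=\emptyset$ trivializes the as-maximality hypothesis of Theorem~\ref{the:quasi-2-decomp}, and it therefore delivers a tuple $\bb=(c_1,c_2,c_3)\in\rel$ with $(a,a)\sqq_{as}\pr_J\bb$ in $\pr_J\rel$ for every $2$-element $J\sse[3]$. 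Projecting further to a single coordinate via Lemma~\ref{lem:to-max}(4) yields $a\sqq_{as}c_i$ in $\zA$ for each $i$. Since $\bb$ lies in the subalgebra generated by $u_1,u_2,u_3$, we can write $\bb=t(u_1,u_2,u_3)$ for some ternary term $t$, and reading off components gives $t(a,a,b)=c_1$, $t(a,b,a)=c_2$, $t(b,a,a)=c_3$, which is the required property for the single triple $(\zA,a,b)$.

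To obtain a single term valid for all of $\cA$, I would let $I=\{(\zA,a,b):\zA\in\cA,\,a,b\in\zA\}$ (finite), form the product $\zW=\prod_{(\zA,a,b)\in I}\zA$, and let $A$ and $B$ be the diagonal elements of $\zW$ with entries $a$ and $b$, respectively, at index $(\zA,a,b)$. Running the same argument inside $\rel^*=\Sg{(A,A,B),(A,B,A),(B,A,A)}\sse\zW^3$ supplies a tuple $\bb^*\in\rel^*$ whose three two-coordinate projections are as-connected to $(A,A)$, and expressing $\bb^*$ as the application of a ternary term $\maj$ to the generators gives the uniform operation. Because the projection of $\rel^*$ onto a single index $(\zA,a,b)\in I$ equals the corresponding single-algebra $\rel$, Lemma~\ref{lem:to-max}(4) applied to that projection transports the as-connectivity simultaneously to every triple in $I$. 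The ``In particular'' clause is then immediate: if $a\in\amax(\zA)$ and $a\sqq_{as}c$, then also $c\sqq_{as}a$ by as-maximality, so $c\in\as(a)$.

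The main obstacle I anticipate is justifying the uniformity step. The product $\zW$ need not belong to $\cA$ and need not be smooth (smoothness of the factors does not automatically lift to products), while Theorem~\ref{the:quasi-2-decomp} and Lemma~\ref{lem:to-max} are set in the smooth, type-\one-omitting framework. I would resolve this by replacing $\zW$ with a smooth reduct, which exists by~\cite[Theorem~12]{Bulatov20:graph} because $\zW$ still omits type~\one\ as a product of type-\one-omitting algebras; any term of such a reduct remains a term of every $\zA\in\cA$. Alternatively, one can enlarge $\cA$ to a finite $\H\S$-closed class containing $\zW$ and apply the argument inside the larger class.
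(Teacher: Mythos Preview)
The paper does not contain its own proof of this statement; it is quoted as a preliminary from \cite{Bulatov20:maximal} (Theorem~32 there), immediately after the Quasi-$2$-Decomposition Theorem (Theorem~30 of the same reference). Your derivation from Theorem~\ref{the:quasi-2-decomp} is exactly the natural route and almost certainly the one taken in \cite{Bulatov20:maximal}. The single-pair argument is correct: $(a,a,a)$ has each binary projection witnessed by one of the generators $u_1,u_2,u_3$, the choice $X=\emptyset$ makes the as-maximality hypothesis vacuous, and any term realising the resulting $\bb$ from the three generators has the desired property.

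Your concern about the uniformity step is overstated, and the fixes you propose are shakier than necessary. In particular, passing to a smooth reduct of $\zW$ is dangerous: it changes the class relative to which thin edges (and hence as-paths) are computed, so the as-connectivity you obtain in the reduct need not translate back into as-connectivity for the original $\zA$'s with respect to $\cA$. The clean resolution is simply not to treat $\rel^*$ as a ternary relation over $\zW$ at all, but as a $3|I|$-ary relation whose factors are the algebras $\zA\in\cA$ themselves. The very same combinatorial check (for any two coordinates, possibly from different blocks, some $V_i$ projects to the pair of $a$-values) shows that Theorem~\ref{the:quasi-2-decomp} applies directly with $X=\emptyset$ and all factors lying in $\cA$; smoothness of $\zW$ is never invoked. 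Taking $J$ to be any two coordinates inside a fixed $(\zA,a,b)$-block and then projecting once more via Lemma~\ref{lem:to-max}(4) yields $a\sqq_{as}\maj(a,a,b)$, $a\sqq_{as}\maj(a,b,a)$, $a\sqq_{as}\maj(b,a,a)$ inside a subalgebra of $\zA$, and the definitions of thin edges (which quantify only over terms of $\cA$ and subalgebras generated by the given elements) make such edges in a subalgebra automatically thin edges in $\zA$.
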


A function $\maj$ satisfying the properties from 
Theorem~\ref{the:pseudo-majority} is called a 
\emph{quasi-majority operation}.

\subsection{Rectangularity}\label{sec:rectangularity}

Let $\rel$ be a subdirect product of (arbitrary) algebras $\zA_1,\zA_2$. By 
$\rel[c], \rel^{-1}[c']$ for $c\in\zA_1,c'\in\zA_2$ we denote 
the sets $\{b\mid (c,b)\in\rel\}, \{a\mid (a,c')\in\rel\}$, respectively, and for 
$C\sse\zA_1,C'\sse\zA_2$ we use $\rel[C]=\bigcup_{c\in C}\rel[c]$, 
$\rel^{-1}[C']=\bigcup_{c'\in C'}\rel^{-1}[c']$, respectively. Binary relations 
$\tol_1,\tol_2$ on $\zA_1,\zA_2$ given by 
$\tol_1(\rel)=\{(a,b)\mid \rel[a]\cap\rel[b]\ne\eps\}$ and
$\tol_2(\rel)=\{(a,b)\mid \rel^{-1}[a]\cap\rel^{-1}[b]\ne\eps\}$, 
respectively, are called \emph{link tolerances} of $\rel$. 
They are tolerances of $\zA_1$, $\zA_2$, respectively, that is, invariant reflexive 
and symmetric relations. The transitive closures $\lnk_1,\lnk_2$ 
of $\tol_1(\rel),\tol_2(\rel)$ are called 
\emph{link congruences}, and they are, indeed,
congruences. Relation $\rel$ is said to be \emph{linked} 
if the link congruences are full congruences.

\begin{lemma}[Lemma~24, \cite{Bulatov20:maximal}]%
\label{lem:as-rectangularity} 
Let $\rel$ be a subalgebra of $\zA_1\tm\zA_2$, $\zA_1,\zA_2\in\cV$, and let $a\in\zA_1$ and 
$B=\rel[a]$. For any $b\in\zA_1$ such that $ab$ is thin edge, and any 
$c\in\rel[b]\cap B$, $d\in\rel[b]$ whenever $c\sqq^{as}d$ in $B$.
\end{lemma}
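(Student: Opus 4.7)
My plan is to proceed by induction on the length of an as-path $c=c_0,c_1\zd c_k=d$ witnessing $c\sqq_{as}d$ in $B$, so that it suffices to handle a single as-step $c_0c_1$. At the inductive step, $(a,c_0),(a,c_1)\in\rel$ because $c_0,c_1\in B=\rel[a]$, and $(b,c_0)\in\rel$ by the inductive hypothesis; the goal is to produce $(b,c_1)\in\rel$ by applying a suitable term operation componentwise to these three tuples. By Lemma~\ref{lem:thin-properties}(2), the edge $c_0c_1$ in $B$ remains a thin edge of the same (semilattice or affine) type in $\zA_2$.

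The operation to apply depends on whether the thin edges $ab$ in $\zA_1$ and $c_0c_1$ in $\zA_2$ share a type. If both are thin semilattice, the uniform operation $\cdot$ from Lemma~\ref{lem:good-operation} satisfies $b\cdot a=b$ and $c_0\cdot c_1=c_1$, giving
$$
(b,c_0)\cdot(a,c_1)=(b\cdot a,\,c_0\cdot c_1)=(b,c_1)\in\rel.
$$
If both edges are thin affine, Lemma~\ref{lem:op-s-on-affine}(3) produces a ternary term $h'$ with $h'(a,a,b)=b$ and $h'(c_1,c_0,c_0)=c_1$, and then
$$
h'\bigl((a,c_1),(a,c_0),(b,c_0)\bigr)=(b,c_1)\in\rel.
$$
In every remaining case (noting that as-paths forbid thin majority steps for $c_0c_1$), the edges have different types, and Lemma~\ref{lem:op-s-on-affine}(3) supplies a binary term $p$ with $p(b,a)=b$ and $p(c_0,c_1)=c_1$, yielding
$$
p\bigl((b,c_0),(a,c_1)\bigr)=(b,c_1)\in\rel.
$$

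I do not anticipate any serious obstacle beyond correctly matching the orientations of the thin edges with the arguments of the operations in Lemmas~\ref{lem:good-operation} and \ref{lem:op-s-on-affine}(3); once the case split on the type combination is made, each subcase reduces to a one-line componentwise computation. The main conceptual point is simply that every as-step is compatible with a term operation that fixes $b$ in the first coordinate and advances $c_0$ to $c_1$ in the second.
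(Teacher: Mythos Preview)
The paper does not prove this lemma; it merely cites it from \cite{Bulatov20:maximal}. So there is no ``paper's own proof'' to compare against, and your argument must stand on its own.

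Your proof is correct. The induction on the length of the as-path reduces to a single as-step, and the case split on the types of the two thin edges $ab$ and $c_0c_1$ is exhaustive: $c_0c_1$ can only be semilattice or affine (as-paths exclude majority), while $ab$ can be any of the three. In each case Lemma~\ref{lem:op-s-on-affine}(3) (or the uniform semilattice operation $\cdot$) supplies a term operation sending the triple $(a,c_0),(a,c_1),(b,c_0)\in\rel$ to $(b,c_1)$.

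One minor quibble: your invocation of Lemma~\ref{lem:thin-properties}(2) is in the wrong direction. That lemma says a thin edge in $\zA$ remains thin in a subalgebra, whereas you want the converse (thin in $B$ implies thin in $\zA_2$). In fact you do not need this step at all: Lemma~\ref{lem:op-s-on-affine} is stated for thin edges in \emph{any} algebras of the class $\cA$, and since $\cA$ is closed under subalgebras, $B\in\cA$ and the lemma applies directly to the edge $c_0c_1$ in $B$. You can simply delete the sentence citing Lemma~\ref{lem:thin-properties}(2) without loss.
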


Any subalgebra $\rel$ of a direct product of Mal'tsev algebras satisfies 
the \emph{rectangularity} property: if $(a,c),(a,d),(b,c)\in\rel$ then 
$(b,d)\in\rel$. This implies in particular that for any $\lnk_1$-block 
$B_1$ and any $\lnk_2$-block $B_2$, it holds $B_1\tm B_2\sse\rel$
whenever $\rel\cap(B_1\tm B_2)\ne\eps$. The following two 
statements proved in \cite{Bulatov20:maximal} make this property 
more general.

\begin{prop}[Corollary~27, \cite{Bulatov20:maximal}]%
\label{pro:linkage-rectangularity}
Let $\rel$ be a subdirect product of $\zA_1,\zA_2\in\cV$, 
$\lnk_1,\lnk_2$ the link  congruences, and let $B_1,B_2$ be 
as-components of an $\lnk_1$-block and an $\lnk_2$-block, respectively, 
such that $\rel\cap(B_1\tm B_2)\ne\eps$. Then $B_1\tm B_2\sse\rel$.
\end{prop}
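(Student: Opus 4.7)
The plan is to reduce to the linked case and then establish $B_1 \tm B_2 \sse \rel$ by a non-emptiness step (Step 1) and a propagation step (Step 2) along thin as-edges in $B_1$. The main obstacle will be initiating the propagation.

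First I would replace $\rel$ by $\rel' = \rel \cap (D_1 \tm D_2)$, where $D_i$ is the $\lnk_i$-block containing $B_i$. The standard correspondence between $\lnk_1$- and $\lnk_2$-blocks via $\rel$ (chains in $\tol_1$ and $\tol_2$ transfer along $\rel$) shows $\rel'$ is a subdirect product of $D_1 \tm D_2$ and linked inside it; since $B_i \sse D_i$, both hypothesis and conclusion are unchanged, so one may assume $D_i = \zA_i$.

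Step 1 asserts: for every $a \in B_1$, $\rel[a] \cap B_2 \ne \eps$. To prove it, take an as-path from $a_0$ to $a$ in $B_1$ and lift it via Lemma~\ref{lem:to-max}(3) to an as-path in $\rel$ starting at $(a_0, b_0)$ and ending at some $(a, b)$. Its second-coordinate projection (Lemma~\ref{lem:to-max}(4)) is an as-path from $b_0$ to $b$ in $\zA_2$; all its points lie in $D_2$ by the link correspondence, and its thin edges restrict to $D_2$ by Lemma~\ref{lem:thin-properties}(2), so the path lies in $\cG_{as}(D_2)$. Since $B_2$ is a maximal as-component of $D_2$ containing $b_0$, the endpoint $b$ is forced into $B_2$. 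The symmetric statement for $B_2$-fibers follows by swapping coordinates.

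Step 2 is the following propagation: if $aa'$ is a thin as-edge in $B_1$ and $\rel[a] \supseteq B_2$, then $\rel[a'] \supseteq B_2$. Pick $c \in \rel[a'] \cap B_2$ via Step 1; then $c \in \rel[a] \cap \rel[a']$, and for any $d \in B_2$ the as-path $c \sqq^{as} d$ inside $B_2 \sse \rel[a]$ remains an as-path in $\rel[a]$ by Lemma~\ref{lem:thin-properties}(2), so Lemma~\ref{lem:as-rectangularity} yields $d \in \rel[a']$. By strong as-connectivity of $B_1$, once a single $a^\dagger \in B_1$ has $\rel[a^\dagger] \supseteq B_2$, iterating Step 2 along thin as-edges forces $\rel[a] \supseteq B_2$ for every $a \in B_1$, which is exactly $B_1 \tm B_2 \sse \rel$.

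The hard part is producing such an $a^\dagger$, equivalently (by the symmetric form of Step 2) some $b^\dagger \in B_2$ with $\rel^{-1}[b^\dagger] \supseteq B_1$, since Step 1 only delivers non-empty fibers. I would attack this base case by combining the quasi-majority $\maj$ from Theorem~\ref{the:pseudo-majority} with repeated uses of Lemma~\ref{lem:as-rectangularity}: since $B_i$ is a maximal as-component of $D_i$, for $x \in B_i$ and $y \in D_i$ the elements $\maj(x,x,y), \maj(x,y,x), \maj(y,x,x)$ stay in $B_i$, so coordinate-wise applications of $\maj$ to elements of $\rel \cap (B_1 \tm B_2)$ produced by Step 1 remain inside $\rel \cap (B_1 \tm B_2)$. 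Iterating such $\maj$ applications together with Lemma~\ref{lem:as-rectangularity} should let one enlarge the fiber $\rel[a^\dagger] \cap B_2$ at some suitably chosen $a^\dagger$ until it exhausts $B_2$; making this bootstrapping converge in the finite setting is the delicate point.
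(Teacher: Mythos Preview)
The paper does not prove this proposition here; it is quoted without proof from \cite{Bulatov20:maximal} (as Corollary~28 there), so there is no in-paper argument to compare your attempt against directly. Your reduction to the linked case, your Step~1 (nonemptiness of $\rel[a]\cap B_2$ via lifting as-paths through Lemma~\ref{lem:to-max}(3),(4)), and your Step~2 (propagating a full fiber $\rel[a]\supseteq B_2$ along a thin as-edge $aa'$ using Lemma~\ref{lem:as-rectangularity}) are all correct and use exactly the intended toolkit.

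The gap is precisely where you flag it: obtaining a first $a^\dagger\in B_1$ with $\rel[a^\dagger]\supseteq B_2$. Your $\maj$-based sketch does not close it. The claim that ``coordinate-wise applications of $\maj$ to elements of $\rel\cap(B_1\times B_2)$ remain inside $\rel\cap(B_1\times B_2)$'' is only justified when two of the three arguments agree in \emph{each} coordinate, because Theorem~\ref{the:pseudo-majority} controls $\maj(x,x,y)$ and its permutations but says nothing about $\maj(x,y,z)$ for three distinct as-maximal elements; in particular, applying $\maj$ to $(a,b),(a,b'),(a',b'')\in\rel\cap(B_1\times B_2)$ gives a second coordinate $\maj(b,b',b'')$ with no reason to lie in $B_2$. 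So the proposed ``enlarge a fiber by iterated $\maj$'' has no invariant guaranteeing progress. In \cite{Bulatov20:maximal} this corollary sits after intermediate results (numbered 26--27 there, between Lemma~\ref{lem:as-rectangularity} and Proposition~\ref{pro:umax-rectangular}) that supply exactly the missing ingredient---an element of $\rel[a]\cap\rel[a']\cap B_2$ for adjacent $a,a'\in B_1$---from which a monotone version of your Step~2 (namely $\rel[a]\cap B_2\subseteq\rel[a']$) follows; strong as-connectivity of $B_1$ then makes all sets $\rel[a]\cap B_2$ equal, and symmetric Step~1 forces their common value to be $B_2$. Your outline is the right skeleton, but the base case genuinely needs that extra input rather than an ad hoc $\maj$ iteration.
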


\begin{prop}[Proposition~28, \cite{Bulatov20:maximal}]%
\label{pro:umax-rectangular}
Let $\rel$ be a subdirect product of $\zA_1,\zA_2\in\cV$, 
$\lnk_1,\lnk_2$ the link congruences, and let $B_1$ be an 
as-component of an $\lnk_1$-block and $B_2=\rel[B_1]$. Then 
$B_1\tm\umax(B_2)\sse\rel$.
\end{prop}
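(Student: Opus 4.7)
The plan is first to reduce to the linked case. A standard link-transfer argument shows that $B_2=\rel[B_1]$ lies in a single $\lnk_2$-block $\oB_2$, and that $\rel\cap(\oB_1\tm\oB_2)$ is a linked subdirect product of $\oB_1$ and $\oB_2$ with the same $B_1,B_2$; so one may assume $\zA_1=\oB_1$, $\zA_2=\oB_2$, and $\rel$ is linked.

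Next, fix $b\in\umax(B_2)$ and any $(a_0,b)\in\rel$ with $a_0\in B_1$. Extend $b$ by an as-path in $\zA_2$ to some $b'\in\amax(\zA_2)$, and lift this path via Lemma~\ref{lem:to-max}(3) starting at $(a_0,b)$. Its first-coordinate projection is an as-path in $\zA_1$ starting at $a_0\in B_1\sse\amax(\zA_1)$, so it stays inside the as-component $B_1$. Consequently $b'\in\rel[B_1]=B_2$ and the entire as-path from $b$ to $b'$ runs inside $B_2$. Let $C'$ be the as-component of $\zA_2$ containing $b'$; Proposition~\ref{pro:linkage-rectangularity} applied to $B_1$ and $C'$ then gives $B_1\tm C'\sse\rel$, so $(a,b')\in\rel$ for every $a\in B_1$. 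Since $b\sqq^{as}b'$ via a path in $B_2$ and $b$ is u-maximal in $B_2$, also $b'\sqq^{asm}b$ in $B_2$, so fix an asm-path $b'=e_0,e_1,\ldots,e_k=b$ lying entirely in $B_2$.

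The core of the proof is an induction on $j$ showing $B_1\sse\rel^{-1}[e_j]$; the base case $j=0$ has just been established. For the inductive step, $e_je_{j+1}$ is a thin edge of $\zA_2$, and because $e_{j+1}\in B_2=\rel[B_1]$ there is some $c\in B_1$ with $(c,e_{j+1})\in\rel$, so $c\in\rel^{-1}[e_{j+1}]\cap\rel^{-1}[e_j]$ (the latter by the inductive hypothesis). Apply Lemma~\ref{lem:as-rectangularity} with coordinates $1$ and $2$ interchanged, taking $\tilde B=\rel^{-1}[e_j]$: for any $d\in B_1\sse\tilde B$, the as-strong-connectivity of $B_1$ together with Lemma~\ref{lem:thin-properties}(2) supplies an as-path from $c$ to $d$ inside $\tilde B$, hence $d\in\rel^{-1}[e_{j+1}]$. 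Taking $j=k$ gives $B_1\tm\{b\}\sse\rel$. The delicate point is that the $b'$-to-$b$ path may traverse thin majority edges, across which Proposition~\ref{pro:linkage-rectangularity} alone cannot propagate the required rectangularity; Lemma~\ref{lem:as-rectangularity} handles all three thin-edge types uniformly, but only after one supplies an anchor in the target fiber $\rel^{-1}[e_{j+1}]$, and $e_{j+1}\in B_2$ is precisely what guarantees such an anchor inside $B_1$.
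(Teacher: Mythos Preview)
The paper does not contain a proof of this proposition: it is quoted verbatim as Proposition~29 of \cite{Bulatov20:maximal} and used as a black box. There is therefore no in-paper argument to compare against.

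That said, your argument is sound and assembles exactly the tools the paper makes available. The reduction to the linked case is the standard one (blocks of $\lnk_1$ and $\lnk_2$ correspond, and the restriction is a linked subdirect product with the same $B_1$ and $B_2$). The key step---lifting an as-path $b\sqq_{as}b'$ to $\rel$ via Lemma~\ref{lem:to-max}(1),(3) and observing that the first coordinate cannot leave the as-component $B_1$, hence the second coordinate stays in $B_2$---is correct and gives both $b'\in\amax(\zA_2)\cap B_2$ and, by Proposition~\ref{pro:linkage-rectangularity}, the base case $B_1\times\{b'\}\sse\rel$. The induction along an asm-path back to $b$ inside $B_2$ is also fine: at each step $e_{j+1}\in B_2$ furnishes the required anchor $c\in B_1\cap\rel^{-1}[e_{j+1}]$, and since $B_1\sse\rel^{-1}[e_j]$ by hypothesis and $B_1$ is as-strongly-connected, Lemma~\ref{lem:thin-properties}(2) transports the as-path from $c$ to any $d\in B_1$ into the subalgebra $\rel^{-1}[e_j]$, so Lemma~\ref{lem:as-rectangularity} (with the two factors swapped) applies. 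Your closing remark correctly identifies why Lemma~\ref{lem:as-rectangularity} rather than Proposition~\ref{pro:linkage-rectangularity} is needed along the return path.
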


We complete this section with a technical lemma that will be useful later.

\begin{lemma}\label{lem:as-square-congruence}
Let $\zA\in\cV$, $|\zA|>1$, be an algebra and $C$ its as-component such that $\zA=\Sg{C}$,
let $\rel=\zA\tm\zA=\Sg{C\tm C}$, and let $\al$ be a congruence 
of $\rel$. Then for some $a,b\in C$, $a\ne b$, 
the pair $(a,b)$ is as-maximal in an $\al$-block.
\end{lemma}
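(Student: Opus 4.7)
The plan is by contradiction: assume that no pair $(a,b)\in(C\times C)\setminus\Delta$ (with $\Delta$ the diagonal of $\rel$) is as-maximal in its $\al$-block. By Lemma~\ref{lem:as-product} applied with both factors equal to $C$, the set $C\times C$ is an as-component of $\rel$; hence every $(a,b)\in C\times C$ is as-maximal in $\rel$, and any as-path in $\rel$ (and so in any subalgebra) starting from a point of $C\times C$ stays inside $C\times C$. Consequently, for any $(a,b)\in(C\times C)\setminus\Delta$ in its $\al$-block $B$, following a maximal as-path in $B$ from $(a,b)$ lands in an as-maximal element of $B$ that lies in $B\cap(C\times C)$; the hypothesis forces this element onto the diagonal, so $(a,b)\,\al\,(c,c)$ for some $c\in C$.

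Now the set
\[
\al_{12}=\{(x,y)\in\rel:(x,y)\,\al\,(d,d)\text{ for some }d\in\zA\}
\]
is a subalgebra of $\rel$: a coordinate-wise term operation $f_\rel$ applied to elements $(x_i,y_i)\,\al\,(d_i,d_i)$ produces a pair $\al$-equivalent to $(f(d_1,\ldots,d_n),f(d_1,\ldots,d_n))$, which is on the diagonal. The previous paragraph yields $\al_{12}\supseteq C\times C$, and $\rel=\Sg{C\times C}$ then gives $\al_{12}=\rel$. Therefore every element of $\rel$ is $\al$-equivalent to a diagonal element, and the quotient map $\rho\colon\rel\to\rel/\al$ factors through the diagonal, inducing an isomorphism $\rel/\al\cong\zA/\beta$ where $\beta=\al\cap\Delta^2$ is viewed as a congruence of $\zA$ via $\Delta\cong\zA$.

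The final step, which is the main obstacle, is to exhibit a contradiction. By Proposition~\ref{pro:as-connectivity}(2) we have $\amax(\rel)\subseteq\umax(\rel)$, and the lifting of asm-paths from Lemma~\ref{lem:thin-properties}(1) implies that $\rho(C\times C)$ lies in the unique u-maximal component of $\rel/\al$. Choose an $\al$-block $B$ whose image is u-maximal in $\rel/\al$ and which contains an off-diagonal element $(a_0,b_0)\in C\times C$ (guaranteed by $\al_{12}=\rel$, by taking $B$ of the form $(c,c)/\al$ for a $c$ realizing some off-diagonal pair). Lemma~\ref{lem:u-max-congruence} then gives $\umax(B)\subseteq\umax(\rel)$, and u-maximality in $B$ implies as-maximality in $B$. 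The difficult part is to show that some element of $\umax(B)$ actually lies in $(C\times C)\setminus\Delta$: starting from $(a_0,b_0)$ and tracing an asm-path inside $B$ to a u-maximal endpoint, one uses the coordinate projections together with Lemma~\ref{lem:to-max} to keep the endpoint inside $C\times C$, while the hypothesis prevents the endpoint from being forced onto the diagonal. The resulting off-diagonal as-maximal element of $B$ contradicts the hypothesis.
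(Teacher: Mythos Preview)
Your first five steps are correct and establish an interesting reduction: under the contradiction hypothesis, every element of $\rel$ is $\al$-equivalent to a diagonal element, and $\rel/\al\cong\zA/\beta$ for the congruence $\beta$ induced on the diagonal. The argument that as-paths in $B$ starting in $C\times C$ stay in $C\times C$ (because $C\times C$ is an as-component of $\rel$ by Lemma~\ref{lem:as-product}) and hence must terminate on the diagonal is sound, and the subalgebra $\al_{12}$ is handled correctly.

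The final step, however, does not go through. First, the assertion that ``u-maximality in $B$ implies as-maximality in $B$'' is false in general: if $x\in\umax(B)$ and $x\sqq^{as}y$ in $B$, u-maximality only gives $y\sqq^{asm}x$, not $y\sqq^{as}x$. Second, your claim that an asm-path from $(a_0,b_0)\in C\times C$ to a u-maximal element of $B$ can be kept in $C\times C$ via Lemma~\ref{lem:to-max} is unjustified: asm-paths may use thin majority edges, which can exit the as-component, and nothing in Lemma~\ref{lem:to-max} prevents this. Third, the sentence ``the hypothesis prevents the endpoint from being forced onto the diagonal'' is backwards --- the hypothesis says precisely that off-diagonal elements of $C\times C$ are \emph{not} as-maximal in their blocks, which is what pushes as-paths \emph{onto} the diagonal, not away from it. Nothing in your argument produces an off-diagonal element as-maximal in $B$, so no contradiction is reached. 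The isomorphism $\rel/\al\cong\zA/\beta$ by itself does not seem to suffice without further structural input.

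The paper's proof supplies exactly that missing input, via a completely different route. It fixes a maximal congruence $\beta$ of $\zA$, sets $\gm_1=\beta\times\zo_\zA$ and $\gm_2=\zo_\zA\times\beta$, and splits on whether $\al\vee(\gm_1\wedge\gm_2)=\zo_\rel$. A preliminary claim (proved with Proposition~\ref{pro:linkage-rectangularity}) shows that whenever two congruences join to $\zo_\rel$, their restrictions to $C^2$ permute; this disposes of Case~1. In Case~2 one passes to the simple quotient $\zA'=\zA/\beta$ and invokes Kearnes' structure theorem for simple idempotent algebras to constrain the congruences of $\zA'\times\zA'$, which pins down the possible positions of $\al$ relative to $\gm_1,\gm_2$ and the skew congruence.
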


\begin{proof}

We start with a general claim.

\medskip

{\sc Claim.} If $\beta,\gm\in\Con(\rel)$ are such that 
$\beta\join\gm=\zo_\rel$,  then 
$\beta\red{C^2}\circ\gm\red{C^2}=\gm\red{C^2}\circ\beta\red{C^2}
=C^2\tm C^2$. 

\smallskip

Let $\rel_1\sse\rel\fac\beta\tm\rel$, $\rel_2\sse\rel\fac\gm\tm\rel$
be given by
$$
\rel_1=\{(a\fac\beta,a)\mid a\in\rel\},\qquad 
\rel_2=\{(a\fac\gm,a)\mid a\in\rel\}.
$$ 
Consider a subdirect product $\relo$ of $\rel\fac\beta\tm\rel\fac\gm$ defined 
as follows
$$
\rela(x,y,z)=\rel_1(x,z)\meet\rel_2(y,z),
$$
and $\relo=\pr_{12}\rela$. As is easily seen, for a $\beta$-block $B_1$ and 
a $\gm$-block $B_2$, $(B_1,B_2)\in\relo$ if and only if $B_1\cap B_2\ne\eps$. As 
$\beta\join\gm=\zo_\rel$, the relation $\relo$ is linked. By $C^2\fac\beta,C^2\fac\gm$ we denote the sets of $\beta$- and $\gm$-blocks of $\rel$ that intersect $C^2$. By Lemma~\ref{lem:factor-as} 
$C^2\fac\beta$ is an as-component of $\rel\fac\beta$ and $C^2\fac\gm$
is an as-component of $\rel\fac\gm$. Therefore, Proposition~\ref{pro:linkage-rectangularity}
implies that $C^2\fac\beta\tm C^2\fac\gm\sse\relo$. Therefore 
for any $\beta$- and $\gm$-blocks $B_1,B_2$ such that $B_1\cap C^2\ne\eps$,
$B_2\cap C^2\ne\eps$ we have $B_1\cap B_2\ne\eps$. 

Now, consider the relation $\rela$ defined above.
For any $a\in C^2$ we have $(a\fac\beta,a\fac\gm,a)\in\rela$. By Lemma~\ref{lem:as-product} $C^2$ is an as-component of $\rel$, and therefore, again by Lemma~\ref{lem:as-product}, $C^2\tm C^2$ is an as-component of $\rel^2$.
By Lemma~\ref{lem:factor-as} $C^2\fac\beta\tm C^2\fac\gm$ is an as-component of $\rel\fac\beta\tm\rel\fac\gm$. Since $C^2\fac\beta\tm C^2\fac\gm\sse\relo$, by Corollary~\ref{cor:subalgebra-path} $C^2\fac\beta\tm C^2\fac\gm$ is an as-component of $\relo$. Thus,
for any $(B_1,B_2)\in C^2\fac\beta\tm C^2\fac\gm$, we have 
$(a\fac\beta,a\fac\gm)\sqq^{as}(B_1,B_2)$ in $\relo$. By the Maximality 
Lemma~\ref{lem:to-max}(3) there is $b\in C^2$, $a\sqq^{as} b$, such that 
$(B_1,B_2,b)\in\rela$. The element $b$ belongs to $B_1\cap B_2\cap C^2$,
as required.

\medskip

Let $\beta$ be a maximal congruence of $\zA$ and $\gm_1=\beta\tm\zo_\zA$, $\gm_2=\zo_\zA\tm\beta$. As is easily seen, $\gm_1,\gm_2$ are 
maximal congruences of $\rel$, and $\gm_1\meet\gm_2=\beta\tm\beta$. Indeed, if $\dl\in\Con(\rel)$ is such that $\gm_1\subsetneq\dl$, there are $(a_1,b_1)\eqc\dl(a_2,b_2)$ such that $(a_1,a_2)\not\in\beta$. Therefore for any $c_1,c_2\in\zA$ there are $d_1,d_2\in\zA$ with $(c_1,d_1)\eqc\dl(c_2,d_2)$. Since $\gm_1\sse\dl$, this holds for any $d_1,d_2\in\zA$. The statement for $\gm_2$ is similar and $\gm_1\meet\gm_2=\beta\tm\beta$ is straightforward. There are two cases.

\medskip

{\sc Case 1.} $\al\join(\beta\tm\beta)=\zo_\rel$.

\smallskip

By the Claim for any $\beta$-blocks $B_1,B_2$ such that 
$B_1\cap C,B_2\cap C\ne\eps$ and an $\al$-block $B$ with 
$B\cap C^2\ne\eps$, we also
have $B\cap(B_1\tm B_2)\cap C^2\ne\eps$. As $C$ generates $\zA$ and $\beta$ is a proper congruence, $C$ intersects with at least two distinct $\beta$-blocks, and therefore one can assume $B_1\ne B_2$. Fix such $B_1,B_2$ and let $(a,b)\in B\cap(B_1\tm B_2)\cap C^2$. Let $(a',b')\in\amax(B)$ be such that $(a,b)\sqq^{as}_B(a',b')$. Since $a,b\in C$ and $a\sqq^{as}_\zA a', b\sqq^{as}_\zA b'$, we have $a',b'\in C$. Let $(a',b')\in B'_1\tm B'_2$, where $B'_1,B'_2$ are $\beta$-blocks.
By Lemma~\ref{lem:thin-properties}(2) $(B_1,B_2)\sqq^{as}(B'_1,B'_2)$ in 
$\rel\fac{\beta\tm\beta}$. As $(B_1,B_2)$ is as-maximal in 
$\rel\fac{\beta\tm\beta}$, there is also an as-path from 
$(B'_1,B'_2)$ to $(B_1,B_2)$ in $\rel\fac{\beta\tm\beta}$. Let $(B'_1,B'_2)=(B_1^1,B_2^1),(B_1^2,B_2^2)\zd (B_1^\ell,B_2^\ell)=(B_1,B_2)$. Since $a',b'\in C$, by Lemma~\ref{lem:thin-properties}(1) every block $B_1^i\tm B_2^i$ contains a pair from $C^2$. By the Claim it means that $B\cap(B_1^i\tm B_2^i)\cap C^2\ne\eps$ for every $i\in[\ell]$. Consider the sequence $B\cap(B_1^1\tm B_2^1)\zd B\cap(B_1^\ell\tm B^\ell_2)$ of blocks of the congruence $\beta'=(\beta\tm\beta)\red B$. Since $B\fac{\beta'}$ is a subalgebra of $\rel\fac{\beta\tm\beta}$ \footnote{Strictly speaking instead of $B\fac{\beta'}$ we need to use $B^{\beta\tm\beta}\fac{\beta\tm\beta}$, where $B^{\beta\tm\beta}$ is the union of all $\beta\tm\beta$-blocks of $\rel$ intersecting with $B$, see \cite[Definition 6.16]{Burris81:universal}.}, by Lemma~\ref{lem:thin-properties}(2) this sequence  is an as-path in $B\fac{\beta'}$. By Lemma~\ref{lem:thin-properties}(1) there are $(a',b')=(a^1,b^1)\zd(a^\ell,b^\ell)$ such that $(a^i,b^i)\in B\cap(B_1^i\tm B_2^i)$, which form an as-path in $B$. Since $(a',b')\in\amax(B)$, we also have $(a^\ell,b^\ell)\in\amax(B)$, and as $a',b'\in C$, we have $a^\ell,b^\ell\in C$. Finally, as $(a^\ell,b^\ell)\in B_1\tm B_2$ and $B_1\ne B_2$, we have $a^\ell\ne b^\ell$, as required.

\medskip

{\sc Case 2.} $\al\join(\beta\tm\beta)\ne\zo_\rel$.

\smallskip

In this case consider $\zA'=\zA\fac\beta$, $\rel'=\rel\fac{\beta\tm\beta}$, 
$\al'=(\al\join(\beta\tm\beta))\fac{\beta\tm\beta}$; note that $\zA'$ is a simple idempotent algebra, and as
$\rel=\zA\tm\zA$, we have $\rel'=\zA'\tm\zA'$. The idea here is to find an $\al'$-block $D$ that does not contain pairs of the form $(a,a)$ and find a pair $(a,b)\in D\cap C^2$. Then, if $B$ denotes the $\al$-block containing $(a,b)$, then any $(a',b')\in\amax(B)$ such that $(a,b)\sqq^{as}_B(a',b')$ satisfies the required conditions.  Indeed, $(a',b')\in C^2$, because $a\sqq^{as}_\zA a', b\sqq^{as}_\zA b'$, and $(a,b)\in C^2$. 

By \cite{Kearnes96:idempotent} and \cite{Valeriote90:finite,Szendrei87:simple} (see also \cite[Proposition~3]{Bulatov20:graph}) either $\zA'$ has an \emph{absorbing 
element} $a$, that is, $f(\vc ak)=a$ for any term operation $f$ of $\zA'$,
whenever $a_i=a$ for some essential variable $x_i$ of $f$, or $\zA'$ is a
module, or the only nontrivial congruences of $\zA'^2$ are 
$\gm'_1=\gm_1\fac{\beta\tm\beta}$, $\gm'_2=\gm_2\fac{\beta\tm\beta}$. 
Since $C$ is a nontrivial as-component, the first option is impossible. Indeed, if $a$ is an absorbing element, for any $b\in\zA'$ the pair $ba$ is a thin semilattice edge, and $\zA'$ has only one as-component, $\{a\}$. By Lemma~\ref{lem:factor-as} $C\fac\beta=\{a\}$, which contradicts the assumptions that $\beta$ is proper and $C$ generates $\zA$.
If $\zA'$ is a simple module, the only proper nonzero congruence that is different from $\gm'_1,\gm'_2$ is the skew congruence with 
$\Dl=\{(a,a)\mid a\in\zA'\}$ as a congruence block. If $\al'$ is the 
skew congruence, let 
$D$ be any $\al\join(\beta\tm\beta)$-block different from $\Dl$. As $C$ is not contained in a $\beta$-block, $D$ can be chosen such that $D\cap C^2\ne\eps$ (in fact, as $\zA'$ is a module, it is not difficult to see that any $\al\join(\beta\tm\beta)$-block satisfies this condition). Then we complete the proof as indicated in the beginning of Case~2.

So, suppose $\al\le\gm_1$. If $\al\le\gm_1\meet\gm_2=\beta\tm\beta$, 
choose a $\beta\tm\beta$-block $B_1\tm B_2$ such that $B_1\ne B_2$ and $B_1\cap C,B_2\cap C\ne\eps$;
clearly $B_1,B_2$ are $\beta$-blocks. Then for any $\al$-block 
$B\sse B_1\tm B_2$ such that $B\cap(B_1\tm B_2)\cap C^2\ne\eps$ we can find an element $(a,b)\in B\cap C^2$ and $a\ne b$ as required.

Finally, suppose $\al\not\le\gm_2$, then $\al\join\gm_2=\zo_\rel$.
Take an $\al$-block $B$, $B\cap C^2\ne\eps$, we have $B\sse B_1\tm\zA$ 
for some $\beta$-block $B_1$. Moreover, by the Claim for any $\beta$-block 
$B_2$ with $B_2\cap C\ne\eps$ there is $(a,b)\in B\cap C^2$ such 
that $b\in B_2$. Choose $B_2\ne B_1$. Then we complete the proof as in Case~1.
\end{proof}

\section{Separating congruences}\label{sec:polynomials-maximality}

In this section we introduce and study the relationship between prime
 intervals in the congruence lattice of an algebra, or in the congruence 
lattices of factors in subdirect products. It was first introduced in 
\cite{Bulatov02:maltsev-3-element}
and used in the CSP research 
in~\cite{Bulatov17:semilattice,Bulatov19:semilattice}.

\subsection{Special polynomials, mapping pairs}\label{special-polynomials}

We start with several technical results. They demonstrate the connection 
between minimal sets of an algebra $\zA\in\cV$ and the structure of its 
graph $\cG_{asm}(\zA)$. Let $\zA\in\cV$ be an algebra and let 
$\relo_{ab}^\zA$, $a,b\in\zA$, denote the subdirect 
product of $\zA^2$ generated by $\{(x,x)\mid x\in\zA\}\cup\{(a,b)\}$.

\begin{lemma}\label{lem:Qab-tolerance}
Let $\zA\in\cV$ and $a,b\in\zA$.
\begin{itemize}
\item[(1)] 
$\relo^\zA_{ab}=\{(f(a),f(b))\mid f\in\Polo(\zA)\}$.
\item[(2)] 
For any $f\in\Polo(\zA)$, $(f(a),f(b))\in\tol_1(\relo^\zA_{ab})$. 
In particular, $\lnk_1(\relo^\zA_{ab})=\Cg{a,b}$; denote this congruence 
by $\al$.
\item[(3)] 
$\relo^\zA_{ab}\sse\al$.
\item[(4)] 
Let $B$ be an $\al$-block, and $C_1,C_2$ as-components of 
$B$ such that $f(a)\in C_1$ and $f(b)\in C_2$ for a 
polynomial $f\in\Polo(\zA)$. Then $C_1\tm C_2\sse\relo_{ab}^\zA$.
\end{itemize}
\end{lemma}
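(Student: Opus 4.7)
\medskip
\noindent
\textbf{Proof plan.}

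The plan is to dispatch the four items essentially in order, with (1) providing the concrete description of $\relo^\zA_{ab}$ that drives all subsequent parts.

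For (1), I would use the standard trick that polynomials of $\zA$ correspond to terms with diagonal constants plugged in. Every element of $\relo^\zA_{ab}$ has the form $t(\vc{(x_i,x_i)}{k}, (a,b)\zd (a,b))$ for some term $t$ of $\zA$ and elements $\vc xk\in\zA$, since $\{(x,x)\mid x\in\zA\}\cup\{(a,b)\}$ generates $\relo^\zA_{ab}$. Setting $f(y)=t(\vc xk,y\zd y)$ (the same $y$ in every slot originally occupied by $(a,b)$) yields a unary polynomial of $\zA$ with first coordinate $f(a)$ and second coordinate $f(b)$. The reverse inclusion is the same computation in reverse: $(f(a),f(b))$ is obtained by applying the term part of $f$ to the diagonal constants and $(a,b)$ coordinatewise.

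For (2), note that $(f(a),f(a))$ and $(f(b),f(b))$ are in $\relo^\zA_{ab}$ as diagonal elements, and $(f(a),f(b))\in\relo^\zA_{ab}$ by (1); hence $f(b)$ witnesses that $\relo^\zA_{ab}[f(a)]\cap\relo^\zA_{ab}[f(b)]\ne\eps$, so $(f(a),f(b))\in\tol_1(\relo^\zA_{ab})$. For $\lnk_1(\relo^\zA_{ab})=\Cg{a,b}$ I argue by double inclusion. The inclusion $\lnk_1\sse\Cg{a,b}$ follows because any $(c,d)\in\tol_1$ can be connected through a common element $e$ with $(c,e),(d,e)\in\relo^\zA_{ab}$, which by (1) gives polynomials witnessing $c\eqc{\Cg{a,b}}e\eqc{\Cg{a,b}}d$. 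The opposite inclusion uses the standard Mal'tsev-style description of $\Cg{a,b}$ as the transitive (and symmetric) closure of $\{(f(a),f(b))\mid f\in\Polo(\zA)\}$, which we have just shown to lie inside $\tol_1(\relo^\zA_{ab})$.

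Item (3) is then immediate from (1): for any $f\in\Polo(\zA)$, the pair $(f(a),f(b))$ lies in $\Cg{a,b}$ because $\Cg{a,b}$ is invariant under every unary polynomial.

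For (4), the key observation is that by (2), $\lnk_1(\relo^\zA_{ab})=\al$, and by the symmetric argument $\lnk_2(\relo^\zA_{ab})=\al$ as well; hence $B_1$ is a $\lnk_1$-block and $B_2$ is a $\lnk_2$-block of $\relo^\zA_{ab}$. The hypothesis supplies $(f(a),f(b))\in\relo^\zA_{ab}\cap(C_1\tm C_2)$, so the as-components $C_1$ and $C_2$ of the respective link-blocks have nonempty intersection with $\relo^\zA_{ab}$. Proposition~\ref{pro:linkage-rectangularity} then yields $C_1\tm C_2\sse\relo^\zA_{ab}$ directly. The only delicate point I expect is setting up (2) carefully enough that the identification of $\lnk_1$ with $\al$ in (4) is unambiguous; beyond that everything is bookkeeping.
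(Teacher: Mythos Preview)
Your proposal is correct and follows essentially the same route as the paper's own proof, which is extremely terse: the paper dismisses (1) as ``directly from the definitions,'' proves the first sentence of (2) by exhibiting $(f(a),f(b))$ and $(f(b),f(b))$ in $\relo^\zA_{ab}$ exactly as you do, states that (3) follows from (1), and derives (4) from (2), (3), and Proposition~\ref{pro:linkage-rectangularity}. Your argument fills in the details the paper omits---in particular the double inclusion for $\lnk_1(\relo^\zA_{ab})=\Cg{a,b}$ and the symmetric observation that $\lnk_2(\relo^\zA_{ab})=\al$ as well---but the underlying ideas are identical.
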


\begin{proof}
(1) follows directly from the definitions.

(2) Take $f\in\Polo(\zA)$, then $\cl{f(a)}{f(b)}\in\relo^\zA_{ab}$ by item (1) 
and $\cl{f(b)}{f(b))}\in\relo^\zA_{ab}$ since $\{(x,x)\mid x\in\zA\}\sse\relo^\zA_{ab}$ by definition. Thus $(f(a),f(b))\in\tol_1(\relo^\zA_{ab})$. This implies $\Cg{a,b}\sse\lnk_1(\relo^\zA_{ab})$. Then (1) implies that $\lnk_1(\relo^\zA_{ab})\sse\Cg{a,b}$.


(3) follows from (1), and 
(4) follows from (2),(3), and Proposition~\ref{pro:linkage-rectangularity}.
\end{proof}

Lemma~\ref{lem:Qab-tolerance}(4) immediately implies

\begin{corollary}\label{cor:mapping-pairs}
Let $\zA\in\cV$ and $\al\in\Con(\zA)$ such that $\zz\prec\al$. Then for any $a,b\in\zA$ with 
$a\eqc\al b$, $a\ne b$, and any $c,d\in\zA$ such that 
$c\eqc\al d$ and $c,d$ belong to the same
as-component of $c\fac\al$, there is $f\in\Polo(\zA)$ 
such that $c=f(a)$, $d=f(b)$.
\end{corollary}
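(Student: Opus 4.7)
The plan is a short two-step application of Lemma~\ref{lem:Qab-tolerance}, using the diagonal $\{(x,x)\mid x\in\zA\}\sse\relo^\zA_{ab}$ as the witness needed to feed part~(4).

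First I would dispose of a trivial case. Since $c\ne d$, any $f\in\Polo(\zA)$ with $f(a)=c,f(b)=d$ forces $a\ne b$; combined with $(a,b)\in\al$ and $\zz\prec\al$, this gives $\Cg{a,b}=\al$, so the congruence $\al$ in the statement coincides with the link congruence $\lnk_1(\relo^\zA_{ab})$ from Lemma~\ref{lem:Qab-tolerance}(2). This identification is what allows the lemma to be invoked with the $\al$ given in the corollary.

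Next, since the generating set of $\relo^\zA_{ab}$ contains the full diagonal, $(c,c)\in\relo^\zA_{ab}$. Applying Lemma~\ref{lem:Qab-tolerance}(1) to this pair produces a polynomial $g\in\Polo(\zA)$ with $g(a)=g(b)=c$. Now let $B=c\fac\al=d\fac\al$ (these coincide as $c\eqc\al d$), and let $C$ be the as-component of $B$ that contains $c$ and $d$, which exists by hypothesis. Both $g(a)$ and $g(b)$ equal $c$ and therefore lie in $C$, so Lemma~\ref{lem:Qab-tolerance}(4), taken with $B_1=B_2=B$ and $C_1=C_2=C$, yields $C\tm C\sse\relo^\zA_{ab}$. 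In particular $(c,d)\in\relo^\zA_{ab}$, and a second application of Lemma~\ref{lem:Qab-tolerance}(1) delivers the required $f\in\Polo(\zA)$ with $f(a)=c$ and $f(b)=d$.

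There is no real obstacle here; the corollary is essentially a direct distillation of Lemma~\ref{lem:Qab-tolerance} once one observes that the diagonal supplies, at no cost, a polynomial sending both $a$ and $b$ to the same as-component as $c,d$, so that the rectangularity statement in part~(4) can be invoked with $C_1=C_2=C$. The only point worth being careful about is that the hypothesis ``$c,d$ lie in the same as-component of $c\fac\al$'' is exactly what is needed to ensure $(c,d)\in C\tm C$ after the rectangularity step; without this as-component assumption the argument would break down, which is consistent with the appearance of as-components in Proposition~\ref{pro:linkage-rectangularity}.
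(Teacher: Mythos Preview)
Your proof is correct and follows the same approach as the paper, which simply records that the corollary ``immediately implies'' from Lemma~\ref{lem:Qab-tolerance}(4); you have just spelled out the details, including the identification $\Cg{a,b}=\al$ and the use of the diagonal pair $(c,c)$ to supply the polynomial needed in part~(4). One minor remark: your phrase ``any $f$ with $f(a)=c,f(b)=d$ forces $a\ne b$'' reads as assuming the conclusion; what you mean (and what is needed) is simply that the case $a=b$ is vacuous since no such $f$ can exist when $c\ne d$, so one may assume $a\ne b$ from the outset.
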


\begin{corollary}\label{cor:max-min-set}
Let $\zA\in\cV$, $\al\in\Con(\zA)$ such that $\zz\prec\al$, and let $c,d\in\zA$, 
$c\eqc\al d$, $c\ne d$, be as-maximal in $B=c\fac\al$. 
\begin{itemize}
\item[(1)] 
If $c,d$ belong to the same as-component of $B$, 
then $\{c,d\}$ is a $(\zz,\al)$-subtrace.
\item[(2)] 
If there is a $(\zz,\al)$-subtrace $\{c',d'\}$ such that $c'\in\as_B(c)$
and $d'\in\as_B(d)$ then $\{c,d\}$ is a $(\zz,\al)$-subtrace as well.
\end{itemize}
\end{corollary}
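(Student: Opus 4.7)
The plan is to reduce both parts to the same template: produce a unary polynomial $f\in\Polo(\zA)$ sending a known $(\zz,\al)$-subtrace onto $\{c,d\}$, and then transfer the corresponding minimal set. Assuming $c\ne d$ (the claim is vacuous if $c=d$, since a subtrace is two-element by definition), once such an $f$ is found and $U$ is a $(\zz,\al)$-minimal set containing the source subtrace, we have $f(\al\red U)\not\sse\zz$, so Lemma~\ref{lem:minimal-sets}(2) shows that $f(U)$ is again $(\zz,\al)$-minimal. Since $c\eqc\al d$, the pair $\{c,d\}\sse f(U)\cap c\fac\al$ will be the desired $(\zz,\al)$-subtrace.

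For part~(1), I would take the source subtrace to be any $(\zz,\al)$-subtrace $\{a,b\}$, which exists by Lemma~\ref{lem:minimal-sets}. Because $(a,b)\in\al-\zz$ and the distinct pair $c,d$ lies in a single as-component of $c\fac\al$, Corollary~\ref{cor:mapping-pairs} directly supplies the required $f$ with $f(a)=c$, $f(b)=d$, and the general template concludes the case.

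For part~(2), the source subtrace is the given $\{c',d'\}$. I would invoke Lemma~\ref{lem:Qab-tolerance}(4) with $a=c'$, $b=d'$, and the identity polynomial: the hypotheses $(c',d')\in\al-\zz$ and $\zz\prec\al$ force $\Cg{c',d'}=\al$, so the link congruence in that lemma coincides with our $\al$. Its conclusion yields $\as(c')\tm\as(d')\sse\relo^\zA_{c'd'}$. Since $c'\in\as(c)$ forces $\as(c')=\as(c)$, and similarly $\as(d')=\as(d)$, the pair $(c,d)$ lies in $\relo^\zA_{c'd'}$. Lemma~\ref{lem:Qab-tolerance}(1) then realizes this membership as $f(c')=c$, $f(d')=d$ for some $f\in\Polo(\zA)$, and the template closes the argument.

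The main point of care will be verifying that the as-components referenced by Lemma~\ref{lem:Qab-tolerance}(4) (those of $\al$-blocks viewed as subalgebras) coincide with the as-components appearing in the hypotheses of the corollary (those of $c\fac\al$ as a subalgebra); both notions are the same, so no translation is needed. Beyond this I do not expect a genuine obstacle, since essentially all of the structural work has already been carried out in Lemmas~\ref{lem:Qab-tolerance} and~\ref{lem:minimal-sets}.
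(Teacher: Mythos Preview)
Your argument is correct and follows essentially the same route as the paper: for (1) you both invoke Corollary~\ref{cor:mapping-pairs} to map a known subtrace onto $\{c,d\}$ and then push the minimal set forward via Lemma~\ref{lem:minimal-sets}(2); for (2) you both use Lemma~\ref{lem:Qab-tolerance}(4) to get $\as(c)\times\as(d)$ inside the relevant $\relo^\zA$. The only cosmetic difference is that in (2) the paper works with $\relo^\zA_{ab}$ for the $a,b$ chosen in (1) and first observes $(c',d')\in\relo^\zA_{ab}$, whereas you work directly with $\relo^\zA_{c'd'}$ and the identity polynomial --- your choice is slightly more self-contained but the substance is identical.
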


\begin{proof}
(1) Take any $(\zz,\al)$-minimal set $U$, and 
$a,b\in U$ with $a\eqc\al b$, $a\ne b$. By Corollary~\ref{cor:mapping-pairs} there is 
$f\in\Polo(\zA)$ with $c=f(a)$, $d=f(b)$. By Lemma~\ref{lem:minimal-sets}(2)
$U'=f(U)$ is a $(\zz,\al)$-minimal set.

(2) Since $(c',d')\in\relo^\zA_{c'd'}$, we have 
$\relo^\zA_{c'd'}\cap(\as_B(c)\tm\as_B(d))\ne\eps$. By 
Lemma~\ref{lem:Qab-tolerance}(4), $\as_B(c)\tm\as_B(d)\sse\relo^\zA_{c'd'}$, in particular, $(c,d)\in\relo^\zA_{c'd'}$, hence there is a polynomial $f$ such that $f(c')=c,f(d')=d$. Let $U$ be a $(\zz,\al)$-minimal set containing $c',d'$. Then $f(U)$ is a $(\zz,\al)$-minimal set containing $c,d$.
\end{proof}

\begin{lemma}\label{lem:type23}
Let $\zA\in\cV$. For any $\al\in\Con(\zA)$ with $\zz\prec\al$ such that $|D|>1$ for some as-component $D$ of an $\al$-block, the prime interval $(\zz,\al)$ has type 
\two\ or \three.
\end{lemma}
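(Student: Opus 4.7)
My plan is a proof by contradiction. Assuming $\typ(\zz,\al)\in\{\four,\five\}$, I will construct a unary polynomial of $\zA$ that restricts to a map on a $(\zz,\al)$-trace swapping two distinct elements, and then observe that no unary polynomial of a 2-element lattice or 2-element meet-semilattice can effect such a swap.

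For the setup, pick distinct $c,d\in D$. Since $D$ is an as-component of $B:=c\fac\al$, the elements $c,d$ are as-maximal in $B$ and as-connected inside it, so Corollary~\ref{cor:max-min-set}(1) yields that $\{c,d\}$ is a $(\zz,\al)$-subtrace. Fix a $(\zz,\al)$-minimal set $U$ with $\{c,d\}\sse N:=U\cap B$; by Lemma~\ref{lem:traces}(2), $N$ is the unique trace of $U$ and is polynomially equivalent to a 2-element lattice (type \four) or to a 2-element meet-semilattice (type \five). To produce the swap polynomial, I apply Lemma~\ref{lem:Qab-tolerance}(4) to $\relo^\zA_{cd}$ with the identity polynomial; since $c,d$ lie in the same as-component $D$ of $B$, this gives $D\tm D\sse\relo^\zA_{cd}$, so Lemma~\ref{lem:Qab-tolerance}(1) produces $f\in\Polo(\zA)$ with $f(c)=d$ and $f(d)=c$. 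By Lemma~\ref{lem:minimal-sets}(3), choose an idempotent $\rho\in\Polo(\zA)$ with $\rho(\zA)=U$ and let $\phi:=\rho\cc f$. Then $\phi$ swaps $c$ and $d$, $\phi(\zA)\sse U$, and $\phi$ preserves $\al$; since $c\eqc\al d$, this forces $\phi(B)\sse B$ and hence $\phi(N)\sse N$.

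The contradiction then comes from the polynomial clone on $N$. Being a unary polynomial of $\zA$ that preserves $N$, the restriction $\phi|_N$ is a unary polynomial of the algebra induced on $N$; polynomial equivalence (Lemma~\ref{lem:traces}(2)) identifies this clone with that of a 2-element lattice or of a 2-element meet-semilattice. The unary polynomials on both of these structures are exhausted by the identity and the constant functions, none of which sends two distinct elements to each other; but $\phi|_N$ does. The main subtlety I expect is the last invocation of polynomial equivalence — one must verify carefully that every unary polynomial of $\zA$ whose restriction maps $N$ into $N$ really does lie in the clone that is being identified with the (semi)lattice's clone, a standard but care-requiring step in tame congruence theory.
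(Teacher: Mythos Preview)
Your proof is correct and follows essentially the same approach as the paper: obtain a polynomial swapping two elements $c,d$ of $D$ (the paper cites Corollary~\ref{cor:mapping-pairs} directly, you unwind it via Lemma~\ref{lem:Qab-tolerance}), observe that $\{c,d\}$ is a $(\zz,\al)$-subtrace, and conclude that the swap is incompatible with types \four\ and \five. The paper compresses your last two paragraphs into the single clause ``this rules out types \four\ and \five''; your composition with $\rho$ is harmless but unnecessary, since in types \four, \five\ with bottom congruence $\zz$ the trace $N$ has exactly two elements, so $N=\{c,d\}$ and already $f(N)=N$.
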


\begin{proof}
Let $a,b\in D$ for an as-component $D$ of an $\al$-block. Then by 
Corollary~\ref{cor:mapping-pairs} there is a polynomial $f$ such that 
$f(a)=b$ and $f(b)=a$. Also, $a,b$ belong to some $(\zz,\al)$-minimal 
set. This rules out types \four\ and \five. Since $\zA$ omits type $\one$, 
this only leaves types \two\ and \three.
\end{proof}

\begin{lemma}\label{lem:type2-condition}
Let $\zA\in\cV$, and let $\al\in\Con(\zA)$ with $\zz\prec\al$ be such that some $\al$-block contains 
a semilattice or majority edge. Then the prime interval $(\zz,\al)$ has type 
\three, \four\, or \five.
\end{lemma}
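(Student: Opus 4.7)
The plan is to argue by contradiction: suppose $\typ(\zz,\al)=\two$; type \one\ is already ruled out by the standing assumption that $\zA$ omits type \one. Let $ab$ be the hypothesized semilattice or majority edge contained in an $\al$-block $B$. Since $\zA$ is idempotent, $B$ is a subalgebra, and therefore $\Sg{a,b}\sse B$.

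By Lemma~\ref{lem:as-type-2} applied to the prime interval $\zz\prec\al$, the block $B=B\fac\zz$ is term equivalent to the idempotent reduct of a module. Since the class of (idempotent reducts of) modules is closed under subalgebras and quotients up to term equivalence, the same is true of $\Sg{a,b}$ and of $\Sg{a,b}\fac\th$, where $\th$ is the maximal congruence of $\Sg{a,b}$ witnessing the type of the edge $ab$. Note that $a\fac\th\ne b\fac\th$, since otherwise the two-element set in the definition of a semilattice or majority edge collapses to a point.

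Now let $f$ be the term operation of $\zA$ witnessing the edge type. Restricting $f$ to $\Sg{a,b}$ and passing to $\Sg{a,b}\fac\th$ yields a term operation of this factor module, which therefore acts as an idempotent affine combination $\sum_i r_i x_i$ with $\sum_i r_i=1$. I would finish with a short direct calculation inside the module: if $f(x,y)=rx+(1-r)y$ is semilattice on $\{a\fac\th,b\fac\th\}$, then both $r(a\fac\th-b\fac\th)=0$ and $(1-r)(a\fac\th-b\fac\th)=0$, whose sum gives $a\fac\th=b\fac\th$; the majority case is analogous, with three coefficients summing to $1$, each of which must annihilate $a\fac\th-b\fac\th$. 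Either conclusion contradicts $a\fac\th\ne b\fac\th$, ruling out type \two.

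The main obstacle is not the linear-algebraic contradiction itself, which is immediate, but the module-theoretic bookkeeping in the middle step: one must check that ``term equivalent to the idempotent reduct of a module'' is preserved when passing from $B$ to the subalgebra $\Sg{a,b}$ and then to the quotient $\Sg{a,b}\fac\th$, so that the affine-combination normal form for term operations is genuinely available on the two-element set where $f$ is supposed to be semilattice or majority. This is a standard fact about module reducts, so once it is recorded the rest of the argument is routine.
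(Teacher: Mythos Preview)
Your argument is correct. Both your proof and the paper's rest on the same incompatibility --- a two-element semilattice or majority algebra cannot sit inside an Abelian interval --- but you route it differently. The paper argues in one line: the edge $ab$ yields a non-Abelian subalgebra of $B$ (namely, $\Sg{a,b}$ has a two-element quotient that is a semilattice or admits a majority operation, and such algebras fail the term condition), hence the interval $(\zz,\al)$ is non-Abelian, hence not of type \two. You instead invoke Lemma~\ref{lem:as-type-2} to get the full module structure on $B$, then push it down to $\Sg{a,b}\fac\th$ and compute directly with affine coefficients.

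Your route is heavier: Lemma~\ref{lem:as-type-2} already relies on Theorem~7.11 of Hobby--McKenzie to produce a Mal'tsev operation on the block, whereas the paper's proof needs only the standard fact that type~\two\ intervals are Abelian and that a two-element semilattice or majority algebra is not. On the other hand, your version is more self-contained for a reader unfamiliar with the term condition, since the contradiction is reached by an explicit coefficient calculation rather than by citing ``non-Abelian''. The module-closure bookkeeping you flag is indeed routine (affine subspaces and quotients of modules are again modules), so there is no real obstacle.
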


\begin{proof}
We need to show that $(\zz,\al)$ does not have type \two. Let $B$ be the 
$\al$-block containing a semilattice or majority edge. Then $B$ contains a 
non-Abelian subalgebra. By Lemma~\ref{lem:as-type-2} if $(\zz,\al)$ has type \two, then $B$ is term equivalent to a module. Since every subalgebra of a module is also term equivalent to a module, $(\zz,\al)$ cannot have type \two. 
\end{proof}

\subsection{Separation}\label{sec:separation}

The first several definitions and results of this section are valid for arbitrary algebras, not only for algebras from $\cV$.

Let $\zA$ be an algebra, and let $\al\prec\beta$ and $\gm\prec\dl$ be prime
intervals in $\Con(\zA)$. We say that $(\al,\beta)$ can be 
\emph{separated} from 
$(\gm,\dl)$  if there is a unary polynomial $f\in\Polo(\zA)$ such that 
$f(\beta)\not\sse\al$, but $f(\dl)\sse\gm$. The polynomial $f$ in this case is 
said to \emph{separate} $(\al,\beta)$ from $(\gm,\dl)$. 

Since we often consider relations rather than single algebras, we also introduce 
separability in a slightly different way. Let $\rel$ be a subdirect product of 
$\zA_1\zd\zA_n$. Let $I,J\sse[n]$ and let
$\al_I\prec\beta_I$, $\al_J\prec\beta_J$ be prime intervals in $\Con(\pr_I\rel)$ and 
$\Con(\pr_J\rel)$, respectively. Interval $(\al_I,\beta_I)$ can be separated from 
$(\al_J,\beta_J)$ 
if there is a unary polynomial $f$ of $\rel$ such that $f(\beta_I)\not\sse\al_I$ but 
$f(\beta_J)\sse\al_J$. Similarly, the polynomial $f$ in this case is said to 
\emph{separate} $(\al_I,\beta_I)$ from $(\al_J,\beta_J)$

First, we observe a connection between separation in a single algebra and in 
relations. Clearly $\al_I,\al_J,\beta_I,\beta_J$ give rise to congruences of $\rel$ and therefore separation as defined above can be expressed as separation of intervals in $\Con(\rel)$, although one needs to be careful to preserve the primality of congruence intervals. But we can go a bit further.

\begin{lemma}\label{lem:separation-separation}
\begin{itemize}
\item[(1)] 
Let $\rel$ be the binary equality relation on an arbitrary idempotent algebra $\zA$. Let $\al_1=\al,\beta_1=\beta$
be viewed as congruences of the first factor of $\rel$, and 
$\al_2=\gm,\beta_2=\dl$ as congruences of the second factor of $\rel$. 
The prime interval $(\al,\beta)$ can be separated from $(\gm,\dl)$ as 
intervals in $\Con(\zA)$ if and only if $(\al_1,\beta_1)$ can be separated 
from $(\al_2,\beta_2)$ in $\rel$.
\item[(2)] 
Let $\rel$ be a subdirect product of arbitrary idempotent algebras $\vc\zA n$, $I,J\sse[n]$, and $\rel^*$ constructed as follows: $K=I\cap J$, say, $K=[k]$ and $K^*=\{n+1\zd n+k\}$, 
\[
\rel^*=\{(\vc an,\vc ak)\mid (\vc an)\in\rel\},
\]
$I^*=I,J^*=(J-K)\cup K^*$. Then $\pr_I\rel,\pr_J\rel$ are isomorphic to $\pr_{I^*}\rel^*,\pr_{J^*}\rel^*$, let $\vf_I,\vf_J$ be the isomorphisms. 
\begin{itemize}
\item
For every polynomial $f$ of $\rel$ there is a polynomial $f^*$ of $\rel^*$ such that $f^*$ acts on $\pr_{I^*}\rel^*\pr_{J^*}\rel^*$ as $\vf_I(f),\vf_J(f)$, respectively, and for every polynomial $f$ of $\rel^*$ there is a polynomial $f'$ of $\rel$ such that $f'^*=f$. 
\item
Let $\al_I,\beta_I\in\Con(\pr_I\rel)$, $\al_J,\beta_J\in\Con(\pr_J\rel)$ be such that $\al_I\prec\beta_I$, $\al_J\prec\beta_J$ and $\al^*_{I^*}=\vf_I(\al_I)$, $\beta^*_{I^*}=\vf_I(\beta_I)$, $\al^*_{J^*}=\vf_J(\al_J)$, $\beta^*_{J^*}=\vf_J(\beta_J)$ be congruences of $\pr_{I^*}\rel^*,\pr_{J^*}\rel^*$. Then $\al^*_{I^*}\prec\beta^*_{I^*}$, $\al^*_{J^*}\prec\beta^*_{J^*}$, and $(\al^*_{I^*},\beta^*_{I^*})$ can be separated from $(\al^*_{J^*},\beta^*_{J^*})$ if and only if $(\al_I,\beta_I)$ can be separated from $(\al_J,\beta_J)$.
\end{itemize}
\end{itemize}
\end{lemma}

\begin{proof}
(1) Note that for any polynomial $f$ of $\rel$ its action on the first and second projections of 
$\rel$ is the same polynomial of $\zA$. Therefore $(\al,\beta)$ can be 
separated from $(\gm,\dl)$ in $\Con(\zA)$ if and only if there is 
$f\in\Polo(\zA)$, $f(\beta)\not\sse\al$ while $f(\dl)\sse\gm$. This condition 
can be expressed as follows: there is $f\in\Polo(\rel)$, 
$f(\beta_1)\not\sse\al_1$ while $f(\beta_2)\sse\al_2$, which precisely 
means that $(\al_1,\beta_1)$ can be separated from $(\al_2,\beta_2)$ 
in $\rel$.

\smallskip

(2) The isomorphism part of the lemma is straightforward, while the separation part can be proved in a way similar to item (1).
\end{proof}

In what follows when proving results about separation we will always assume 
that we deal with a relation --- a subdirect product ---  and that the prime 
intervals in question are from congruence lattices of projections of the 
subdirect product that do not overlap. If this is not the case, one
can duplicate some of the factors and apply 
Lemma~\ref{lem:separation-separation}(2).

Let $\rel$ be a subdirect product of $\zA_1\zd\zA_n$, $I\sse[n]$,
and let $f$ be a polynomial of $\pr_I\rel$, that is, there are a term operation 
$g$ of $\rel$ and $\vc\ba k\in\pr_I\rel$ such that 
$f(\vc x\ell)=g(\vc x\ell,\vc\ba k)$. The tuples $\ba_i$ can be extended
to tuples $\ba'_i\in\rel$. Then the polynomial of $\rel$ given by
$f(\vc x\ell)=g(\vc x\ell,\vc{\ba'}k)$ is said to be an 
\emph{extension} of $f$ to a polynomial of $\rel$.

\begin{lemma}\label{lem:min-set-separation}
Let $\vc\zA n$ be arbitrary algebras. Let $\rel$ be a subdirect product of $\zA_1\zd\zA_n$, $I,J\sse[n]$, and 
$\al_I\prec\beta_I$, $\al_J\prec\beta_J$ for $\al_I,\beta_I\in\Con(\pr_I\rel)$, 
$\al_J,\beta_J\in\Con(\pr_J\rel)$.
\begin{itemize}
\item[(1)] 
Suppose that $(\al_I,\beta_I)$ can be separated from $(\al_J,\beta_J)$. Then for any $(\al_I,\beta_I)$-minimal set $U$ there is a unary idempotent polynomial $f$ of $\rel$ that separates $(\al_I,\beta_I)$ from $(\al_J,\beta_J)$ and such that $U=f(\pr_I\rel)$.
\item[(2)] 
Let $\relo\sse\rel$ also be a subdirect product of $\zA_1\zd\zA_n$ with $\pr_I\relo=\pr_I\rel$, $\pr_J\relo=\pr_J\rel$. If $(\al_I,\beta_I)$ can be separated from $(\al_J,\beta_J)$ in $\relo$, $(\al_I,\beta_I)$ can also be separated from $(\al_J,\beta_J)$ in $\rel$.
\end{itemize}
\end{lemma}

\begin{proof}
(1) Let $g$ be a polynomial of $\rel$ separating $(\al_I,\beta_I)$ from $(\al_J,\beta_J)$. 
Since $g(\beta_I)\not\sse\al_I$, by Lemma~\ref{lem:minimal-sets}(6) there 
is an $(\al_I,\beta_I)$-minimal set $U$ such that $g(\beta_I\red U)\not\sse\al_I$.
Let $V=g(U)$, by Lemma~\ref{lem:minimal-sets}(2) $V$ is a 
$(\al_I,\beta_I)$-minimal set. Let $h$ be a unary 
polynomial such that $h$ maps $V$ onto $U$ and $h\circ g\red U$ is the 
identity mapping. Let also $h'$ be an extension of $h$ to a polynomial 
of $\rel$. Let $p$ be an idempotent unary
polynomial of $\pr_I\rel$ such that $p(\pr_I\rel)=U$ and let $p'$ be an extension of $p$
to a polynomial of $\rel$. If $(h'\circ g)^m$ is an idempotent polynomial of $\rel$ (the idempotent power of of $h'\circ g$),
then let $q=p'\circ (h'\circ g)^m$. Now we can choose $f$ as the idempotent
power of $q$. By the choice of $q$ we have $q(\pr_I\rel)=U$,
while on the projection on $J$ the polynomial $q$ collapses $\beta_J$ to $\al_J$, so the idempotent power of $q$ has all the required properties.

\smallskip

(2) Let a polynomial $g$ of $\relo$ separate $(\al_I,\beta_I)$ from $(\al_J,\beta_J)$ in $\relo$. Then, as $\relo\sse\rel$, $g$ gives rise to a polynomial of $\rel$ and, as $\pr_I\relo=\pr_I\rel, \pr_J\relo=\pr_J\rel$, its action on $\pr_I\rel$ and $\pr_J\rel$ is the same. The result follows.
\end{proof}

\subsection{Chaining}\label{sec:chaining-def}

For a subdirect product $\rel\sse\zA_1\tms\zA_n$ the relation `cannot be 
separated' on prime intervals of the $\zA_i$'s  is clearly reflexive and
transitive. Almost all of our main results require that this relation is also symmetric. If the algebras $\zA_i$ are Mal'tsev, this relation is symmetric (for partial results see \cite{Bulatov02:maltsev-3-element,Bulatov17:semilattice}). 
Moreover, it can be shown that it remains `almost' symmetric when the 
$\zA_i$'s contain no majority edges. In the general case however the situation 
is more complicated. Next we introduce conditions that make the 
`cannot be separated' relation to some extent symmetric, at least in what 
concerns our needs, as it will be demonstrated in Theorem~\ref{the:relative-symmetry}. From now on we again assume that algebras we work with belong to $\cV$.

For an algebra $\zA\in\cV$, $\al\in\Con(\zA)$, a set $\cU$ of unary polynomials of $\zA$, and $B\sse\zA^2$,
we denote by $\Cgg{\zA,\al,\cU}B$ the transitive-symmetric closure
of the set $T(B,\cU)=\{(f(a),f(b))\mid (a,b)\in B, f\in\cU\}\cup\al$. We will write $\{c,d\}\in T(B,\cU)$ if $(c,d)\in T(B,\cU)$ or $(d,c)\in T(B,\cU)$. Let  
$\al,\beta\in\Con(\zA)$, $\al\le\beta$, and $D$ a subuniverse of $\zA$.
We say that $\al$ 
and $\beta$ are \emph{$\cU$-chained} with respect to $D$ if for any $a,b\in D$, $a\eqc\beta b$, such that $\beta=\Cgg\zA{\{(a,b)\}}\join\al$ and any $\beta$-block $B$ such that $B\cap\umax(D)\ne\eps$, we have 
$(\umax(B\cap D))^2\sse\Cgg{\zA,\al,\cU}{\{(a,b)\}}$. (Observe that by Lemma~\ref{lem:u-max-congruence} $\umax(B\cap D)\sse B\cap\umax(D)\sse D$.) Note that if $\beta=\Cgg\zA{\{(a,b)\}}\join\al$ for no $a,b\in D$, then $\al,\beta$ are trivially $\cU$-chained.

Let $\rel$ be a subdirect product of $\vc\zA n\in\cV$, $\beta_i\in\Con(\zA_i)$, let $B_i$ be a $\beta_i$-block for $i\in[n]$,
and let $\relov=\rel\cap\ov B$. A unary polynomial $f$ of $\rel$ is said to be \emph{$\ov B$-preserving} if $f(\relov)\sse\relov$. 
We call an $n$-ary relation $\rel$ \emph{chained} 
with respect to $\ov\beta,\ov B$ if\\[2mm]
(Q1) for any $I\sse[n]$ and $\al,\beta\in\Con(\pr_I\rel)$ such that 
$\al\le\beta\le\ov\beta_I$, $\al,\beta$ are $\cU_{\ov B}$-chained with respect to 
$\pr_I\relov$, where $\cU_{\ov B}$ is the set of all 
$\ov B$-preserving polynomials of $\rel$;\\[2mm]
(Q2) for any $I,J\sse[n]$ and $\al,\beta\in\Con(\pr_I\rel)$, $\gm,\dl\in\Con(\pr_J\rel)$ (note that it may happen that $I\cap J\ne\eps$), such that $\al\prec\beta\le\ov\beta_I$, $\gm\prec\dl\le\ov\beta_J$, 
and $(\al,\beta)$ can be separated from $(\gm,\dl)$, the congruences 
$\al$ and $\beta$ are $\cU(\gm,\dl,\ov B)$-chained with respect to 
$\pr_I\relov$, where $\cU(\gm,\dl,\ov B)$ 
is the set of all $\ov B$-preserving polynomials $g$ of $\rel$ such that 
$g(\dl)\sse\gm$.

Next we make an observation about a link between the property of chaining and factor algebras. It is straightforward from the definitions.

\begin{lemma}\label{lem:chaining-inher}
Let $\vc\zA n,\rel,\ov\beta,\ov B$, and $\relov$ be as in the definition of chaining.
\begin{itemize}
\item[(1)] 
Let $K\sse[n]$ and consider $\rel$ as a subdirect product $\rel^*$ of $\pr_K\rel$ and $\zA_i$, $i\in[n]-K$. Let $\beta^*_i=\beta_i$ and $B^*_i=B_i$ for $i\in[n]-K$ and $\beta^*_K=\ov\beta_K$, $B^*_K=\ov B_K$. If $\rel$ is chained with respect to $\ov\beta,\ov B$ then relation $\rel^*$ is chained with respect $\ov\beta^*,\ov B^*$.
\item[(2)] 
Let $\rel$ be chained with respect to $\ov\beta,\ov B$ and let $\al_i\in\Con(\zA_i)$ and $\al_i\le\beta_i$, $i\in[n]$. Then $\rel\fac{\ov\al}$ is chained with respect to $\ov\beta\fac{\ov\al},\ov B\fac{\ov\al}$.
\end{itemize}
\end{lemma}

\begin{proof}
(1) Suppose $K=[k]$. Let $\rel$ be chained with respect to $\ov\beta,\ov B$. Let $I,J\sse\{K,k+1\zd n\}$ and $\al,\beta\in\Con(\pr_I\rel^*)$, $\gm,\dl\in\Con(\pr_J\rel^*)$ as in the definition of chaining. Also, if $K\in I$ [if $K\in J$] set $I'=(I-\{K\})\cup K$ [respectively, $J'=(J-\{K\})\cup K$] and $I'=I$ [respectively, $J'=J$] otherwise. Then the congruences $\al,\beta,\gm,\dl$ viewed as congruences of $\pr_I\rel^*,\pr_J\rel^*$ satisfy the same conditions when viewed as congruences of $\pr_{I'}\rel,\pr_{J'}\rel$. 

(2) is straightforward from the definitions. 
\end{proof}

In many cases Lemma~\ref{lem:chaining-inher} allows us to replace large sets $I,J$ in the definition of chaining with singletons. 

We conclude this subsection with an auxiliary statement.
For an algebra $\zA\in\cV$, an arbitrary congruence $\al\in\Con(\zA)$, $a',b'\in\zA$, and a set $\cU$ of unary polynomials of $\zA$ we use $T_\al(a,b,\cU)$, where 
$a=a'\fac\al,b=b'\fac\al$, to denote $T(\{(a',b')\},\cU)\fac\al$. 

\begin{lemma}\label{lem:good-polys}
Let $\vc\zA n,\rel,\ov\beta,\ov B$, and $\relov$ be as in the definition of chaining, and $\rel$ is chained with respect to $\ov\beta,\ov B$. Let $I,J\sse[n]$, $\al\le\ov\beta_I$, and $\gm\prec\dl\le\ov\beta_J$. Let also $\cU\in\{\cU_{\ov B},\cU(\gm,\dl,\ov B)\}$.
\begin{itemize}
\item[(1)] 
Any constant polynomial from $\rel'$ belongs to $\cU$.
\item[(2)] 
If $f$ is a $k$-ary term operation of $\rel$ and $\vc gk\in\cU$, 
then $f(g_1(x)\zd g_k(x))\in\cU$. 
\item[(3)] 
For any $\ba,\bb\in\pr_I\rel'\fac\al$, a congruence $\beta\in\Con(\pr_I\rel)$, $\beta\le\ov\beta_I$, such that $\Cgg{\pr_I\rel\fac\al}{\{(\ba,\bb)\}}=\beta\fac\al$ and $(\al,\beta)$ can be separated from $(\gm,\dl)$ if $\cU=\cU(\gm,\dl,\ov B)$, and 
any $\beta\fac\al$-block $E$ with $E\cap\umax(\pr_I\rel'\fac\al)\ne\eps$, it holds 
that $(\bc,\bd)\in T_\al(\ba,\bb,\cU)$ for any $\bc,\bd$ from the same as-component 
of $E\cap \pr_I\rel'\fac\al$.
\item[(4)] 
For any $\ba,\bb\in \pr_I\rel'\fac\al$, a congruence $\beta\in\Con(\pr_I\rel)$, $\beta\le\ov\beta_I$, such that $\Cgg{\pr_I\rel\fac\al}{\{(\ba,\bb)\}}=\beta\fac\al$ and $(\al,\beta)$ can be separated from $(\gm,\dl)$ if $\cU=\cU(\gm,\dl,\ov B)$, any 
$\beta\fac\al$-block $E$ with $E\cap\umax(\pr_I\rel'\fac\al)\ne\eps$, and any 
$\bc,\bd\in\umax(E')$, where $E'=E\cap \pr_I\rel'\fac\al$, there is a 
sequence $\bc=\bc_1\zd \bc_k=\bd$ in $E'$ such that 
$\{\bc_i,\bc_{i+1}\}\in T_\al(\ba,\bb,\cU)$ for $i\in[k-1]$.\footnote{Note that since $(\ba,\bb)\in\beta\fac\al$, the same is true for any pair from $T_\al(\ba,\bb,\cU)$, implying that $\vc\bc k\in E'$.} In particular, if $|\umax(E')|>1$, for any $\bc\in\umax(E')$ there exists $\bc'\in E'$, $\bc\ne \bc'$, such that $\{\bc,\bc'\}\in T_\al(\ba,\bb,\cU)$.
\item[(5)] 
Let $\ba,\bb\in \pr_I\rel'\fac\al$. If $(\bc,\bd)\in T_\al(\ba,\bb,\cU)$ then $T_\al(\bc,\bd,\cU)\sse T_\al(\ba,\bb,\cU)$.
\item[(6)] 
Let $\beta\in\Con(\pr_I\rel)$, $\al\prec\beta\le\ov\beta_I$,
and $(\al,\beta)$ can be separated from $(\gm,\dl)$. Let $E$ be a 
$\beta\fac\al$-block such that $E\cap\umax(\pr_I\rel'\fac\al)\ne\eps$,   
and let $E'=E\cap \pr_I\rel'\fac\al$. Let also $|\umax(E')|>1$. For any $\ba,\bb\in E'$, $\ba\ne \bb$, there are $\ba',\bb'\in E'$, $\ba'\ne \bb'$, such that $(\ba',\bb')\in T_\al(\ba,\bb,\cU)$ and $(\ba',\bb')\in T_\al(\ba',\bb',\cU)$.\footnote{If $\cU=\cU_{\ov B}$ this claim is trivial for any $\ba',\bb'$, because the identity operation witnesses that $(\ba',\bb')\in T_\al(\ba',\bb',\cU_{\ov B})$. However if $\cU=\cU(\gm,\dl,\ov B)$, it does not have to be the case.} If $\ba\in\umax(E')$, $\ba'$ can be chosen to be $\ba$. Moreover, if $E'$ contains a nontrivial as-component, 
then there is a set $T\sse\beta\fac\al$ such that $T\sse T_\al(\bc,\bd,\cU)$ for any 
$\bc,\bd\in \pr_I\rel'\fac\al$, $\bc\ne \bd$, $\bc\eqc{\beta\fac\al} \bd$, and $T=T_\al(\bc',\bd',\cU)$ for some $\bc',\bd'$. 
\end{itemize} 
\end{lemma}

\begin{proof}
Items (1),(2) are straightforward, and (4) follows from the definitions and the assumption that $\rel$ is chained with respect to $\ov\beta,\ov B$.
Let $T(\ba,\bb)$ denote $T_\al(\ba,\bb,\cU)$. 

(3) By item (2) $\relo=T(\ba,\bb)\cap(\pr_I\rel'\fac\al)^2$ is a subalgebra of 
$(\pr_I\rel\fac\al)^2$. Since $\rel$ is chained, $\umax(E\cap \pr_I\rel'\fac\al)$ 
is a subset of a block of the link congruence of $\relo$. Therefore, as by Corollary~\ref{cor:amax-umax} $\amax(E\cap \pr_I\rel'\fac\al)\sse\umax(E\cap \pr_I\rel'\fac\al)$, for any as-component $D$ 
of $E\cap \pr_I\rel'\fac\al$ we have $D^2\cap\relo\ne\eps$ by item (1), and hence $D^2\sse\relo$ by 
Proposition~\ref{pro:linkage-rectangularity}.

(5) Let $\{\ba',\bb'\}\in T(\bc,\bd)$. Then there are polynomials $f,g\in\cU$ with
$\{\bc,\bd\}=f(\{\ba,\bb\})$ and $\{\ba',\bb'\}=g(\{\bc,\bd\})$. Then $g\circ f\in\cU$ by 
item (2) of the lemma and the definition, and $g\circ f(\{\ba,\bb\})=\{\ba',\bb'\}$.

(6) Consider $T(\ba,\bb)$ and let $(\ba',\bb')\in E'^2\cap T(\ba,\bb)$ be such that $T(\ba',\bb')$ is minimal possible with respect to inclusion. Using (5) and the minimality of $T(\ba',\bb')$, it is easy to see that $(\ba',\bb')\in T(\ba',\bb')$. If $\ba\in\umax(E')$ by item (4) $(\ba,\bb'')\in T(\ba',\bb')$ for some $\bb''\in E'$. Again, by the minimality of $T(\ba',\bb')$ and item (5) we have $T(\ba',\bb')=T(\ba,\bb'')$. For the second part of the claim take $\ba,\bb\in C$ where $C$ is a nontrivial as-component in $E'$. 
By item (3) $\{\ba,\bb\}\in T(\bc,\bd)$ for any appropriate $\bc,\bd$. Therefore by item (5)
$T=T(\ba,\bb)\sse T(\bc,\bd)$.
\end{proof}

\subsection{Symmetricity of separation}\label{sec:symmetry-sep}

The following theorem establishes the weak symmetricity of separability
relation mentioned before.


\begin{theorem}\label{the:relative-symmetry}
Let $\rel$ be a subdirect product of $\vc\zA n\in\cV$, for each $i\in[n]$, 
$\beta_i\in\Con(\zA_i)$, $B_i$ a $\beta_i$-block such that $\rel$ is chained 
with respect to $\ov\beta,\ov B$; $\relov=\rel\cap\ov B$, 
$B'_i=\pr_i\relov$. Let also $I,J\sse[n]$, $\al\prec\beta\le\ov\beta_I$, $\gm\prec\dl=\ov\beta_J$, 
where $\al,\beta\in\Con(\pr_I\rel)$, $\gm,\dl\in\Con(\pr_J\rel)$. 
If $\ov B'_J\fac\gm$  has a 
nontrivial as-component $C_J$ and $(\al,\beta)$ can be separated from $(\gm,\dl)$,
then there is a $\ov B$-preserving polynomial $g$ of $\rel$ such that 
$g(\beta\red{\ov B'_I})\sse\al$ and $g(\dl\red{\ov B'_J})\not\sse\gm$. Moreover, for any
$\bc,\bd\in \ov C_J$, $\bc\ne \bd$, the polynomial $g$ can be chosen such that $g(\bc)=\bc,g(\bd)=\bd$.
\end{theorem}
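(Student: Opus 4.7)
The plan is to handle the trivial case separately, reduce the main case to choosing a minimizing pair via the chaining condition, and then build $g$ by combining the resulting idempotent polynomial with operations coming from the nontrivial as-component $C'$ in order to flip the direction of separation on the second coordinate.

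If $\beta\red{B'_1}\sse\al$, the identity polynomial of $\rel$ already does the job: it is $\ov B$-preserving, satisfies $g(\beta\red{B'_1})\sse\al$ vacuously, fixes every $c,d\in C'$, and $g(\dl)=\dl\not\sse\gm$ since $\gm\prec\dl$. Assume from now on that $\beta\red{B'_1}\not\sse\al$, and pick $(a,b)\in (\beta\red{B'_1})-\al$. Primeness of $\al\prec\beta$ forces $\Cgg{\zA_1}{\al\cup\{(a,b)\}}=\beta$. Using Lemma~\ref{lem:to-max}(5) I move to $a\fac\al,b\fac\al$ lying in $\umax(B'_1\fac\al)$; the chaining hypothesis (Q2), combined with separability of $(\al,\beta)$ from $(\gm,\dl)$, guarantees that Lemma~\ref{lem:good-polys}(3),(6),(7) are available with $\cU=\cU(\gm,\dl,\ov B)$.

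Among such pairs I choose one minimizing $T_\al(a,b,\cU)$. Lemma~\ref{lem:good-polys}(7) then supplies, for every $(c_1,d_1)\in T_\al(a,b,\cU)$, an idempotent $h_{c_1,d_1}\in\cU$ with $h_{c_1,d_1}(c_1)\eqc\al a$ and $h_{c_1,d_1}(d_1)\eqc\al b$. Lemma~\ref{lem:good-polys}(3),(4) then let me reach every as-connected pair in the appropriate $\beta\fac\al$-block, and applying (Q2) blockwise and composing the resulting idempotents yields a single $h^*\in\cU$ that collapses all of $\beta\red{B'_1}$ into $\al$ on the first coordinate. At this stage $h^*(\dl)\sse\gm$, so the separation still goes in the ``wrong'' direction and must be reversed.

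To reverse it I exploit the nontrivial as-component $C'\sse B'_2\fac\gm$. Fix $c,d\in C'$. As-connectivity of $C'$ (Proposition~\ref{pro:as-connectivity}(2)) lifted via Lemma~\ref{lem:to-max}(3) yields an as-path in $\rel'$ whose second projection traces $c$ and $d$; this gives polynomials of $\rel$ built from the uniformizing semilattice and affine operations of Theorem~\ref{the:uniform} together with the auxiliary operations of Lemma~\ref{lem:op-s-on-affine}(2),(3), which fix both $\gm$-classes $c$ and $d$ on the second coordinate while acting as an $\eta$-congruence on affine edges elsewhere. Composing such an operation with $h^*$ produces the desired $g$: on the second coordinate it fixes $c,d$ (hence $g(c)=c$, $g(d)=d$ and $g(\dl)\not\sse\gm$), and on the first coordinate the collapse inherited from $h^*$ is preserved because the combining operation acts trivially modulo $\al$ on the as-components where $h^*$ has already done its work.

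The main obstacle is this final composition: one must verify that inserting the as-path operation on the second coordinate does not destroy $h^*$'s collapsing effect on the first. This hinges on the carefully chosen properties of Lemma~\ref{lem:op-s-on-affine}, which controls the interaction of distinct thin-edge types across factors, together with the minimality of $T_\al(a,b,\cU)$ fixed in the previous step (without which iterating the construction could produce polynomials outside $\cU$ or with incompatible actions on the two coordinates). A secondary technical issue is extending the collapse from the single $\beta\fac\al$-block containing $(a,b)$ to all of $\beta\red{B'_1}$, which is handled by applying the chaining condition (Q2) across blocks and composing the resulting idempotents.
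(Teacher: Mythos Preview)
Your proposal has a genuine gap at the ``reversal'' step, and it is not a technicality that can be patched.

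You first build $h^*\in\cU(\gm,\dl,\ov B)$ with $h^*(\beta\red{B'_1})\sse\al$. By definition of $\cU(\gm,\dl,\ov B)$ this $h^*$ also satisfies $h^*(\dl)\sse\gm$; in particular $h^*(c)\eqc\gm h^*(d)$ for your chosen $c,d\in C'$. You then propose to compose $h^*$ with operations coming from an as-path in $C'$ to obtain $g$ fixing $c$ and $d$. But any polynomial of the form $g=F\circ h^*$ still satisfies $g(c)\eqc\gm g(d)$, so $g$ cannot fix two distinct $\gm$-classes. If instead you intend something like $g(x)=p(x,h^*(x),\ldots)$ with extra parameters, then on the first coordinate you need $p(a,e,\ldots)\eqc\al p(b,e,\ldots)$ for every $(a,b)\in\beta\red{B'_1}$ (where $e=h^*(a)\eqc\al h^*(b)$); nothing in Lemma~\ref{lem:op-s-on-affine} or Theorem~\ref{the:uniform} gives you such a $p$, and in fact asking for it is essentially asking for the theorem itself. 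The appeal to ``acts trivially modulo $\al$ on the as-components where $h^*$ has already done its work'' has no justification: those operations are defined by their behaviour on thin edges of the \emph{second} factor and carry no control over $\al$ on the first.

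The paper's proof avoids this trap entirely: rather than first collapsing and then trying to un-collapse, it constructs for each pair $(a,b)\in\beta\red{B'_1}$ a polynomial $h^{ab}$ that \emph{simultaneously} identifies $a$ with $b$ and fixes $c,d$. This is obtained by showing that the $4$-ary relation $\relo'=\{(f(a),f(b),f(c),f(d)):f\ \ov B\text{-preserving}\}$ contains a tuple of the form $(a',a',c,d)$, which requires a careful analysis of the link congruences of $\relo'$ (split into two cases according to how $C'$ sits over the $\beta$-blocks of $B'_1$, and using Proposition~\ref{pro:linkage-rectangularity} and Lemma~\ref{lem:as-square-congruence}). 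Only after these $h^{ab}$ are in hand does one compose them to get a single $g$. Your outline skips precisely this construction, which is the substantive content of the theorem.
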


By Lemma~\ref{lem:chaining-inher}(1) we can assume $|I|=|J|=1$ and $I=\{1\},J=\{2\}$. Before proving Theorem~\ref{the:relative-symmetry} we describe two auxiliary constructions and prove several intermediate results. Fix a relation $\rel$ as specified in the statement of Theorem~\ref{the:relative-symmetry}. In the rest of Section~\ref{sec:symmetry-sep} we use the notation from Theorem~\ref{the:relative-symmetry}. As is easily seen, by Lemma~\ref{lem:chaining-inher}(2) we can assume that  $\al,\gm$ are equality relations. So, let $\al=\zz_1,\gm=\zz_2$.

We use the following two constructions. By $\relo^*(a,b,c,d,\ba)\sse\zA_1^2\tm\zA_2^2\tm\rel$ we denote the relation generated by 
$\{(a,b,c,d,\ba)\}\ \cup\{(x,x,y,y,\bz)\mid \bz\in\rel,\bz[1]=x,\bz[2]=y\}$, where $a,b\in B'_1,c,d\in B'_2$ and $\ba\in\relov$. 
Let $\relo(a,b,c,d,\ba)=\pr_{1234}\relo^*(a,b,c,d,\ba)$ and 
$\relo'(a,b,c,d,\ba)= \pr_{1234}(\relo^*(a,b,c,d,\ba)\cap(B'_1\tm B'_1\tm B'_2\tm B'_2\tm\ov B))$. 

Also, by $\relp^*(c_1,c_2,c_3,\ba)\sse\zA_2^3\tm\rel$ we denote the relation generated by 
$\{(c_1,c_2,c_3,\ba)\}\ \cup\{(x,x,x,\bz)\mid \bz\in\rel,\bz[2]=x\}$, where $c_1,c_2,c_3\in B'_2$ and $\ba\in\relov$. 
Let $\relp(c_1,c_2,c_3,\ba)=\pr_{123}\relp^*(c_1,c_2,c_3,\ba)$ and 
$\relp'(c_1,c_2,c_3,\ba)=\pr_{123}(\relp^*(c_1,c_2,c_3,\ba)\cap(B'_2\tm B'_2\tm B'_2\tm\ov B))$. 

\begin{lemma}\label{lem:polys-max}
For any $a,b\in B'_1$ $a\eqc\beta b$, any $c,d,c_1,c_2,c_3\in B'_2$, and any $\ba\in\relov$,
\begin{itemize}\itemsep0pt
\item[(1)]
$\relo=\relo(a,b,c,d,\ba)$ is the set of quadruples $(f(a),f(b),f(c),f(d))$ 
for unary polynomials $f$ of $\rel$, and $\relo'=\relo'(a,b,c,d,\ba)$ is the set of quadruples $(f(a),f(b), f(c),f(d))$ for $\ov B$-preserving unary polynomials $f$ of $\rel$.\footnote{In fact, that we are looking for $\ov B$-preserving polynomials  is the reason we consider the big relation $\relo^*$ rather than starting directly with the 4-ary $\relo(a,b,c,d,\ba)$.} 
\item[(2)]
$\pr_1\relo'=\pr_2\relo'=B'_1$, $\pr_3\relo'=\pr_4\relo'=B'_2$, and $\pr_{12}\relo=\relo^{\zA_1}_{ab}$, $\pr_{34}\relo=\relo^{\zA_2}_{cd}$.
\item[(3)]
$\relp=\relp(c_1,c_2,c_3,\ba)$ is the set of triples $(f(c_1),f(c_2),f(c_3))$ 
for unary polynomials $f$ of $\rel$, and $\relp'=\relp'(c_1,c_2,c_3,\ba)$ is the set of triples $(f(c_1),f(c_2),f(c_3))$ for $\ov B$-preserving unary polynomials $f$ of $\rel$.
\item[(4)]
$\pr_1\relp'=\pr_2\relp'=\pr_3\relp'=B'_2$, and $\pr_{12}\relp=\relo^{\zA_2}_{c_1c_2}$, $\pr_{23}\relp=\relo^{\zA_2}_{c_2c_3}$, $\pr_{13}\relp=\relo^{\zA_2}_{c_1c_3}$.
\item[(5)]
Let $\lnk_{12},\lnk_{34}$ denote the link congruences of $\relo'$ viewed as a subdirect product of $\relo'_{12}=\pr_{12}\relo'$ and $\relo'_{34}=\pr_{34}\relo'$.\footnote{Note that these congruences may be different from the link congruences of $\relo$ restricted to $\pr_{12}\relo\cap(B'_1\tm B'_1)$, $\pr_{34}\relo\cap(B'_2\tm B'_2)$, 
respectively.} There are $a',b'\in B'_1$ and $c',d'\in C_2$, $c'\ne d'$, such that $(a',b',c',d')\in\relo'$ and $(a',b')\in\amax(\relo'_{12}\cap E^2)$ for a $\beta$-block $E$ such that $E\cap\umax(B'_1)\ne\eps$. Moreover, if $(c',d')$ can be chosen from $\amax(D_{34})$, where $D_{34}$ is the $\lnk_{34}$-block containing $(c',d')$, then $(a',b')$ can also be chosen from $\amax(D_{12})$, where $D_{12}$ is the $\lnk_{12}$-block containing $(a',b')$, and $\{(a',b')\}\tm C_{34}\sse\relo'$, where $C_{34}$ is the as-component of $D_{34}$ containing $(c',d')$.
\end{itemize}
\end{lemma}

\begin{remark}
Observe that by Lemma~\ref{lem:polys-max}(1,3) the choice of $\ba\in\rel'$ is immaterial for $\relo(a,b,c,d,\ba),\relo'(a,b,c,d,\ba),\relp(c_1,c_2,c_3,\ba),\relp'(c_1,c_2,c_3,\ba)$. We therefore will often omit $\ba$ from this notation in the future.
\end{remark} 

\begin{proof}
(1) To prove the second part of (1) observe that $(a',b',c',d')\in\relo'$ if and only if there are a term operation $g(x,\vc yk)$ and $\bb_1=(a_1,a_1,c_1,c_1,\ba_1)\zd\lb\bb_k=(a_k,a_k,c_k,c_k,\ba_k)$ with $\ba_i\in\rel$, $\ba_i[1]=a_i,\ba_i[2]=c_i$ for $i\in[k]$, such that $(a',b',c',d',\bb')=g((a,b,c,d,\ba),\bb_1\zd\bb_k)$ and $\bb'\in\ov B$. Consider the unary polynomial $f(x)=g(x,\ba_1\zd\ba_k)$; clearly $\bb'=f(\ba)$. As $\rel'=\rel\cap\ov B$ is a congruence block, the latter implies that $f$ is $\ov B$-preserving.  Finally, since $a_i=\ba_i[1],c_i=\ba_i[2]$, we have $a'=f(a),b'=f(b)$ in $\pr_1\rel$ and $c'=f(c), d'=f(d)$ in $\pr_2\rel$. The first part of (1) is proved in a similar way, except we do not need to care about polynomials being $\ov B$-preserving.

\smallskip

(2) follows from the definitions.

\smallskip

(3) and (4) are proved in a way similar to (1) and (2).

(5) Consider the relation $\rela=\relo'\cap(B'_1\tm B'_1\tm\Sg{C_2}\tm\Sg{C_2})$. Since $\rel$ is chained, by item (2) and Lemma~\ref{lem:good-polys}(3) $C^2_2\sse\pr_{34}\rela$ and $\pr_{34}\rela$ is generated by $C^2_2$. 
Let 
\[
\rela\fac\beta=\{(a'\fac\beta,b'\fac\beta,c',d')\mid (a',b',c',d')\in\rela\}\sse B'_1\fac\beta\tm B'_1\fac\beta\tm\Sg{C_2}\tm\Sg{C_2}.
\]
Let $\lnk^\beta_{12},\lnk^\beta_{34}$ be the link congruences of $\rela\fac\beta$ treated as a subdirect product of $\pr_{12}\rela\fac\beta$, $\pr_{34}\rela\fac\beta$. By Lemma~\ref{lem:as-square-congruence} there is a $\lnk^\beta_{34}$-block $D^\beta_{34}$ such that for some $c',d'\in C_2$, $c'\ne d'$, the pair $(c',d')$ is as-maximal in $D^\beta_{34}$. Let $C^\beta_{34}$ be the as-component of $D^\beta_{34}$ containing $(c',d')$. Note that, as $a\eqc\beta b$, $\pr_{12}\rela\fac\beta$ is the equality relation. Let $E$ be a $\beta$-block such that $(E,E)$ is as-maximal in the $\lnk^\beta_{12}$-block $D^\beta_{12}$ corresponding to $D^\beta_{34}$ (i.e.\ $\rela\fac\beta\cap(D^\beta_{12}\tm D^\beta_{34})\ne\eps$) and such that $(E,E,c',d')\in\rela\fac\beta$. Such a $\beta$-block exists by the Maximality Lemma~\ref{lem:to-max}(5). Note that $E\cap\umax(B'_1)\ne\eps$. Indeed, as $D^\beta_{34}$ contains as-maximal elements, so does $D^\beta_{12}$. By Lemma~\ref{lem:u-max-congruence}  every element u-maximal in $D^\beta_{12}$ is u-maximal in $\pr_{12}\rela\fac\beta$. This means that $E$ is u-maximal in $B'_1\fac\beta$, and again by Lemma~\ref{lem:u-max-congruence} $E$ contains elements u-maximal in $B'_1$. We need several observations concerning $\rela$ and the block~$E$.

(i) By Proposition~\ref{pro:linkage-rectangularity} $\{(E,E)\}\tm C^\beta_{34}\sse\rela\fac\beta$. This means that for any $(c'',d'')\in C^\beta_{34}$ there is $(a',b')\in \rela^E_{12}$, where $\rela^E_{12}=\pr_{12}\rela\cap E^2$, such that $(a',b',c'',d'')\in\rela$. Also, by construction for any $(a',b')\in\rela^E_{12}$ there is $(c'',d'')\in D^\beta_{34}$ such that $(a',b',c'',d'')\in\rela$. 

(ii) Let $D'_{12}=\{(a',b')\in\pr_{12}\rela\mid (a'\fac\beta,b'\fac\beta)\in D^\beta_{12}\}$. We claim that for any $(a',b',c'',d'')\in\rela$ with $(a',b')\in D'_{12}$ and $(c'',d'')\in C^\beta_{34}$, and any $(a'',b'')\in \relo'_{12}$ with $(a',b')\sqq^{as}_{\relo'_{12}}(a'',b'')$, there is $(c^*,d^*)\in C^2_2$ such that $(a'',b'',c^*,d^*)\in\rela$. Moreover, if $(a'',b'')\in D'_{12}$ and $(a',b')\sqq^{as}_{D'_{12}}(a'',b'')$ then $(c^*,d^*)$ can be chosen from $C^\beta_{34}$. Indeed, the first claim follows from the Maximality Lemma~\ref{lem:to-max}(3) applied to $\relo'$: there is $(c^*,d^*)\in\relo'_{34}$ such that $(a'',b'',c^*,d^*)\in\relo'$ and $(c'',d'')\sqq^{as}_{\relo'_{34}}(c^*,d^*)$. Then $(c^*,d^*)\in C^2_2$, and so $(a'',b'',c^*,d^*)\in\rela$.
Assuming $(a',b')\sqq^{as}_{D'_{12}}(a'',b'')$, again by the Maximality Lemma~\ref{lem:to-max}(3) the as-path from $(c'',d'')$ to $(c^*,d^*)$ lies in $D^\beta_{34}$. Therefore $(c^*,d^*)\in C^\beta_{34}$. 

(iii) In particular, (ii) implies that if $(a',b')\in\amax(\rela^E_{12})$, $(a',b',c'',d'')\in\rela$ for some $(c'',d'')\in C^\beta_{34}$, and $(a',b')\sqq^{as}_{\relo'_{12}\cap E^2}(a'',b'')$, then $(a'',b'')\in\pr_{12}\rela$. Indeed, the path from $(a',b')$ to $(a'',b'')$ can be extended in $\relo'$ to a path from $(a',b',c'',d'')$ to some $(a'',b'',c^*,d^*)\in\relo'$. Since $c^*,d^*\in C_2$, $(a'',b'')\in\pr_{12}\rela$. This also implies that $(a',b')\in\amax(\relo'_{12}\cap E^2)$.

(iv) The last observation we need is that by Lemma~\ref{lem:factor-as}(2) and the choice of $E$, $\rela^E_{12}$ contains a pair $(a',b')\in\amax(D'_{12})$. Moreover, for any $(a',b')\in\rela^E_{12}$ there is $(a'',b'')\in\amax(\rela^E_{12})\cap\amax(D'_{12})$ and such that $(a',b')\sqq^{as}_{D'_{12}}(a'',b'')$.

\smallskip

Hence, starting from any $(a',b',c',d')\in\rela$, where $(a',b')\in\rela^E_{12}$ and $c',d'$ are as identified in the beginning of the proof, we first find $(a'',b'',c^*,d^*)\in\rela$ such that $(a'',b'')\in\amax(\rela^E_{12})$ and $(c^*,d^*)\in C^\beta_{34}$. Since $\{(E,E)\}\tm C^\beta_{34}\sse\rela\fac\beta$ there is an as-path in $\rela\cap(E^2\tm C^\beta_{34})$ from $(a'',b'',c^*,d^*)$ to $(a^\dg,b^\dg,c',d')$ for some $(a^\dg,b^\dg)\in\amax(\rela^E_{12})$. Note that by (iii) $(a^\dg,b^\dg)$ can also be assumed to be in $\amax(\relo'_{12}\cap E^2)$. The tuple $(a^\dg,b^\dg,c',d')$ is as required. 

For the second claim of item (5) suppose that $(c',d')\in\amax(D_{34})$, where $D_{34}$ is the $\lnk_{34}$-block containing $(c',d')$. Recall that $C_{34}$ denotes the as-component of $D_{34}$ containing $(c',d')$. Let $D_{12}$ be the $\lnk_{12}$-block containing $(a^\dg,b^\dg)$. Then similar to (ii), (iii) there is $(a^*,b^*)\in\amax(D_{12})\cap\amax(\relo'_{12}\cap E'^2)$ for some $\beta$-block $E'$ such that $(a^\dg,b^\dg)\sqq^{as}_{D_{12}}(a^*,b^*)$. By Proposition~\ref{pro:linkage-rectangularity} $\{(a^*,b^*)\}\tm C_{34}\sse\rela$ and hence the pairs $(a^*,b^*)$ and $(c',d')$ satisfy the desired conditions.
Part (5) is proved.
\end{proof}  

\begin{lemma}\label{lem:Q-linkage}
Let $c,d\in C_2$, $c\ne d$, and let $a,b\in B'_1$, $a\eqc\beta b$ be such that $(a,b)\in T_0=T(a,b,\cU(\zz_2,\dl,\ov B))$ and such that $(a,c)\in\pr_{12}\relov$. Let $\lnk_{12},\lnk_{34}$ denote the link congruences of $\relo'=\relo'(a,b,c,d)$ viewed as a subdirect product of $\relo'_{12}=\pr_{12}\relo'$ and $\relo'_{34}=\pr_{34}\relo'$. Then
\begin{itemize}
\item[(1)]
$(\zz_1\tm\beta)\red{\relo'_{12}}\sse\lnk_{12}$ and 
$(\zz_2\tm\dl)\red{\relo'_{34}}\sse\lnk_{34}$.
\item[(2)]
Let $E=B\cap B'_1$, where $B$ is the $\beta$-block containing $a,b$.
Then $(\beta\tm\beta)\red{\umax(E)\tm\umax(E)}\sse\lnk_{12}$, or equivalently, $(\umax(E)\tm\umax(E))^2\sse\lnk_{12}$.
\item[(3)]
for any $c',d'\in C_2$ the pair $(c',d')$ is as-maximal in a $\lnk_{34}$-block $D'_{34}$, and the as-component of $D'_{34}$ containing $(c',d')$ is either $\{c'\}\tm C_2$, or $C_2\tm C_2$.
\item[(4)]
There is $a'\in B'_1$ such that $(a',a',c,d)\in\relo'$. In other words, there is a $\ov B$-preserving polynomial $g$ of $\rel$ such that $g(a)=g(b)$ and $g(c)=c,g(d)=d$.
\end{itemize}
\end{lemma}

\begin{proof}
(1) The relation $\relo'$ contains tuples $(a,b,c,d)$, $(a,b,c',c')$, $(a,a,c',c')$, $(a,a,c,c)$ for some $c'\in B'_2$. Indeed, $(a,b,c,d)\in\relo'$ by definition, 
$(a,a,c,c)\in\relo'$ because $(a,c)\in\pr_{12}\relov$, and $(a,b,c',c'),(a,a,c',c')$
can be chosen to be the images of $(a,b,c,d)$ and $(a,a,c,c)$, respectively,
under a $\ov B$-preserving polynomial $g$ such that $g(a)=a$, 
$g(b)=b$ and $g(\dl)\sse\zz_2$. Such a polynomial exists by the choice of $a,b$ such that $(a,b)\in T_0$ and because $(\zz_1,\beta)$ can be separated from 
$(\zz_2,\dl)$. This implies that $(a,b)\eqc{\lnk_{12}}(a,a)$, $(c,d)\eqc{\lnk_{34}}(c,c)$. 
Let $\eta_{12},\eta_{34}$ be
congruences of $\pr_{12}\relo(a,b,c,d),\pr_{34}\relo(a,b,c,d)$ generated by $((a,b),(a,a))$ and 
$((c,d),(c,c))$, respectively. Then 
$$
\eta_{12}\red{\relo'_{12}}=(\zz_1\tm\beta)\red{\relo'_{12}},\quad
\text{and}\quad 
\eta_{34}\red{\relo'_{34}}=(\zz_2\tm\dl)\red{\relo'_{34}}.
$$
Indeed, in the case of, say, $\zz_1\tm\beta$, since 
$(a,b)\eqc{\zz_1\tm\beta}(a,a)$, $\eta_{12}\le\zz_1\tm\beta$. On the other hand, the relation $\relo'_{12}$ consists of pairs 
$(g(a),g(b))$ for $\ov B$-preserving unary polynomials $g$ of $\zA_1$. Since 
$(a,b)\eqc{\eta_{12}}(a,a)$, for any $(a',b')=(g(a),g(b))\in\relo'_{12}$ it holds that 
$$
(a',b')=(g(a),g(b))\eqc{\eta_{12}}(g(a),g(a))=(a',a'),
$$
showing that $(\zz_1\tm\beta)\red{\relo'_{12}}\sse\eta_{12}$. For $\relo'_{34}$ and $\zz_2\tm\dl$ the argument is similar.

Finally, as $(a,b),(a,a)$ are in the same $\lnk_{12}$-block, $\eta_{12}\red{\relo'_{12}}\sse\lnk_{12}$; and, as $(c,d),(c,c)$ are in the same $\lnk_{34}$-block, $\eta_{34}\red{\relo'_{34}}\sse\lnk_{34}$. Item (1) is proved. 

\smallskip

(2) By the definition of $T_0$, for any pair $(a',b')\in T_0\cap E^2$ there is  
a $\ov B$-preserving polynomial $g$ satisfying $g(a)=a'$, 
$g(b)=b'$, and $g(B'_2)=\{c'\}\sse B'_2$.
Applying $g$ to tuples $(a,a,c,c)$, and $(b,b,d',d')$ for
any $d'$ such that $(b,d')\in\pr_{12}\relov$, we obtain 
$(a',a',c',c'),(b',b',c',c')\in\relo'$
Therefore, $(a',a')\eqc{\lnk_{12}}(b',b')$. 
Since $\rel$ is chained with respect to $\ov\beta,\ov B$, by (Q1) and Lemma~\ref{lem:good-polys}(4) $(a'',a'')\eqc{\lnk_{12}}(b'',b'')$
for any $a'',b''\in\umax(E)$. Together with part (1) this proves the result.

\smallskip

(3) Suppose first that for some $e,e'\in B'_2$, $e\ne e'$, we have $(e,e)\eqc{\lnk_{34}}(e',e')$. 
Then for any pair $(e'',e''')\in T(\{(e,e')\},\cU_{\ov B})$ 
there is a $\ov B$-preserving polynomial $g$ with 
$g(e)=e'', g(e')=e'''$. Applying this polynomial to 
the tuples witnessing that $(e,e)\eqc{\lnk_{34}}(e',e')$ we get 
$(e'',e'')\eqc{\lnk_{34}}(e''',e''')$. Therefore by condition (Q1) all tuples of the 
form $(x,x)$, $x\in\umax(B'_2)$, are $\lnk_{34}$-related. Since by 
Lemma~\ref{lem:good-polys}(3) $(c',d')$ is a 
pair from $T(\{(e,e')\},\cU_{\ov B})$, 
using part~(1) of Lemma~\ref{lem:Q-linkage} this implies that $\lnk_{34}\red{\relo'^{\; u}_{34}}=(\dl\tm\dl)\red{\relo'^{\; u}_{34}}=\relo'^{\; u}_{34}\tm\relo'^{\; u}_{34}$, 
where $\relo'^{\; u}_{34}=\relo'_{34}\cap(\umax(B'_2)\tm \umax(B'_2))$. In particular, 
$C_2^2$, is contained in $\relo'_{34}$, and is contained in a $\lnk_{34}$-block $D'_{34}$. All elements of $C_2^2$ are as-maximal in $D'_{34}$. 

Suppose $(e,e)\eqc{\lnk_{34}}(e',e')$ for no $e,e'\in B'_2$, $e\ne e'$. The inclusion 
$(\zz_2\tm\dl)\red{\relo'_{34}}\sse\lnk_{34}$ implies 
that  if $(c_1,d_1)\eqc{\lnk_{34}}(c_2,d_2)$ then 
$(c_1,c_1)\eqc{\lnk_{34}}(c_2,c_2)$. Therefore, by part (1) 
we have $\lnk_{34}\red{\relo'^{\; u}_{34}}=(\zz_2\tm\dl)\red{\relo'^{\; u}_{34}}$. 
In particular, $\{c'\}\tm C_2$ is contained in a $\lnk_{34}$-block. Since $c',d'$ are 
as-maximal, $(c',d')$ is as-maximal in this $\lnk_{34}$-block. Part (3) is proved.

\smallskip

(4) By Lemma~\ref{lem:polys-max}(5) there is $(a',b',c',d')\in\relo'$ such that $(a',b')\in\amax(\relo'^E_{12})$, $\relo'^E_{12}=\relo'_{12}\cap E^2$, for some $\beta$-block $E$ and $c',d'\in C_2$, $c'\ne d'$. We start by proving that $(a',a')$ is as-maximal in $\relo'^E_{12}$. Let $E'=E\cap B'_1$ and let $\lnk^{Q'}_1,\lnk^{Q'}_2$ be the link congruences of the two copies of 
$B'_1$ with respect to $\relo'_{12}$. As by Lemma~\ref{lem:polys-max}(2) 
$\relo'_{12}\sse\relo^{\zA_1}_{ab}$, we have $\lnk^{Q'}_1,\lnk^{Q'}_2\le\beta\red{B'_1}$. On the other hand, $\relo'_{12}$ consists of pairs of the form $(x,x)$ and 
pairs from $T_1=T(\{(a,b)\},\cU_{\ov B})$ and, as $\rel$ is chained, $\umax(E')$ belongs to a block of the transitive closure of $T_1$. Therefore, it is easy to see that $\umax(E')$ is a subset of both a $\lnk^{Q'}_1$- and a $\lnk^{Q'}_2$-block. Indeed, let $e,e'\in\umax(E')$ and $e=e_1\zd e_k=e'$ be such that $\{e_i,e_{i+1}\}\in T_1$. This means that 
either $(e_i,e_{i+1})\in\relo'_{12}$ or $(e_{i+1},e_i)\in\relo'_{12}$. Since
$(e_i,e_i),(e_{i+1},e_{i+1})\in\relo'_{12}$ by construction, in either case 
we have $(e_i,e_{i+1})\in\lnk^{Q'}_1,\lnk^{Q'}_2$.

Let $C_1$ be the as-component of $E'$ containing $a'$; such an as-component 
exists by the choice of $a'$. As $(a',a')\in\relo'_{12}\cap(C_1\tm C_1)\ne\eps$ and $C_1\sse\umax(E')$, by Proposition~\ref{pro:linkage-rectangularity}  
$C_1\tm C_1\sse\relo'_{12}$. Since $C_1$ is an as-component in $E'$, by
Lemma~\ref{lem:as-product} $C_1\tm C_1$ is an as-component in $\relo'^E_{12}$. In particular $(a',a')$ is as-maximal in $\relo'^E_{12}$.

By Lemma~\ref{lem:Q-linkage}(3) $(c',d')$ is as-maximal in a $\lnk_{34}$-block, which is equal to $\{c'\}\tm C_2$ or to $C^2_2$. Therefore, by Lemma~\ref{lem:polys-max}(5) $\{(a',b')\}\tm \{c'\}\tm C_2\sse\relo'$. Since by Lemma~\ref{lem:Q-linkage}(2) $(a',b')$ and $(a',a')$ are as-maximal in a $\lnk_{12}$-block and $(a',a',c',c')\in\relo'$, by Proposition~\ref{pro:linkage-rectangularity} $\{(a',a')\}\tm \{c'\}\tm C_2\sse\relo'$. In particular, $(a',a',c',d')\in\relo'$. Therefore, there is a $\ov B$-preserving polynomial $f$ such that $f(a)=f(b)$ and $f(c)=c', f(d)=d'$.

Finally, as $\rel$ is chained, by Lemma~\ref{lem:good-polys}(3) there is a $\ov B$-preserving polynomial $f'$ such that $f'(c')=c, f'(d')=d$. As is easily seen, $g=f'\circ f$ satisfies the desired conditions.
\end{proof}

\begin{corollary}\label{cor:claim7}
Let $c,d\in C_2$ and a $\beta$-block $E$, $E\cap\umax(B'_1)\ne\eps$, be such that $(a,c)\in\pr_{12}\relov$ for some $a\in\umax(E')$, $E'=E\cap B'_1$. Then there is a $\ov B$-preserving polynomial $h$ such that $|h(\umax(E'))|=1$ and $h(c)=c, h(d)=d$. 
\end{corollary}

\begin{proof}
If $|\umax(E')|=1$ then $h$ can be chosen to be the identity mapping, so we assume $|\umax(E')|>1$. Since $a\in\umax(E')$, by Lemma~\ref{lem:good-polys}(6) $(a,b)\in T_0=T(\{(a,b)\},\cU(\zz_2,\dl,\ov B))$ for some $b\in E'$. Let us consider $T_0$ as a directed graph with all the loops present. By Lemma~\ref{lem:good-polys}(1,5) any $\ov B$-preserving polynomial $g$ is a homomorphism of $T_0$. Moreover, let $T_{0c}$ denote the subgraph of $T_0$ induced by $(\pr_{12}\rel')^{-1}[c]$, that is, $a'$ is a vertex of $T_{0c}$ if and only if $(a',c)\in\pr_{12}\rel'$.  Note that any $\ov B$-preserving polynomial $g$ such that $g(c)=c$ maps $T_{0c}$ to itself. Since $\rel$ is chained, the set $\umax(E')$ is contained in a connected component (not necessarily strongly connected) of $T_0$.

By Lemma~\ref{lem:Q-linkage}(4) for any $(a',b')\in T_0$, $a'\in V(T_{0c})$, there is a polynomial $h^{a'b'}_{cd}$ such that $h^{a'b'}_{cd}(a')=h^{a'b'}_{cd}(b')$ and $h^{a'b'}_{cd}(c)=c, h^{a'b'}_{cd}(d)=d$. We construct a sequence of induced subgraphs $T_0^i$ of $T_0$ as follows. Let $T_0^1=T_0$. Then if $T_0^i$ contains an edge $(a',b')$ or $(b',a')$ such that $a'\in V(T_{0c})$, set $T^{i+1}_0=h^{a'b'}_{cd}(T^i_0)$. Observe that a connected component of $T^i_0$ is mapped to a connected component of $T^{i+1}_0$. This means that when the sequence stops at, say, $T^k_0$, every connected component of $T^k_0$ containing a vertex from $T_{0c}$ is a singleton. Therefore, if $g_i$ is the polynomial that is used to map $T^i_0$ to $T^{i+1}_0$, then $h=g_{k-1}\circ\dots\circ g_1$ is such that $h(c)=c, h(d)=d$, and $|h(\umax(E'))|=1$.
The polynomial $h$ is as required.
\end{proof}

Next, we show the existence of polynomials of $\rel$ that satisfy certain conditions on $B'_2$.

\begin{lemma}\label{lem:as-moving-polys}
Let $c,d\in C_2$, $c\ne d$, and $c'\in B'_2$, $c'\not\in\{c,d\}$. Either there is a $\ov B$-preserving polynomial $f$ of $\rel$ such that $f(c)=f(c')=c$ and $f(d)=d$, or for any $\ov B$-preserving polynomial $f$ such that $f(c')=c, f(d)=d$, it holds that $f(c)\ne d$.
\end{lemma}

\begin{proof}
We consider $\relp(c',c,d)$, and $\relp'(c',c,d)$ introduced before Lemma~\ref{lem:polys-max}. Let $\relp'=\relp'(c',c,d)$. As $\rel$ is chained, by Lemma~\ref{lem:polys-max}(4) and Lemma~\ref{lem:good-polys}(3) $C_2^2\sse\pr_{12}\relp',\pr_{13}\relp',\pr_{23}\relp'$. Moreover, as $(c,c,c)\in\relp'$, by the Maximality Lemma~\ref{lem:to-max}(3) $\relp'\cap C^3_2$ is subdirect. Suppose that there is a $\ov B$-preserving polynomial $f$ of $\rel$ such that $f(c')=c$ and $f(c)=f(d)=d$. This means that $(c,d,d)\in\relp'$. Since $(d,d,d)\in\relp'$, and since $\rel$ is chained, for any $x,y\in C_2$ there is $z\in B'_2$ such that $(x,z,z),(y,z,z)\in\relp'$. Therefore $C_2$ is a subset of a class of the link congruence of $\pr_1\relp'$, when $\relp'$ is considered a subdirect product of $\pr_1\relp'$ and $\pr_{23}\relp'$. Since $C_2$ and $C_2^2$ are as-components in $\pr_1\relp'$ and $\pr_{23}\relp'$, respectively, by Proposition~\ref{pro:linkage-rectangularity} $C_2^3\sse\relp'$. In particular, $(c,c,d)\in\relp'$ witnessing that there is a $\ov B$-preserving polynomial $f$ of $\rel$ such that $f(c)=f(c')=c$ and $f(d)=d$.
\end{proof}

\begin{proof}[Proof of Theorem~\ref{the:relative-symmetry}]
We need to show that there is a $\ov B$-preserving polynomial $g$ such that $g$ collapses $\beta\red{B'_1}$ but does not collapse $\beta_2=\dl$. 

First we show that for any $c,d\in C_2$ and any 
$(a,b)\in\beta\red{B'_1}$ there is a polynomial $h^{ab}_{cd}$ 
of $\rel$ such that 
\begin{itemize}
\item[(1)]
$h^{ab}_{cd}$ is idempotent;
\item[(2)]
$h^{ab}_{cd}(a)=h^{ab}_{cd}(b)$;
\item[(3)]
$h^{ab}_{cd}(c)=c$, $h^{ab}_{cd}(d)=d$.
\end{itemize}
Fix $c,d\in C_2$. By Corollary~\ref{cor:claim7} such a polynomial exists whenever $a,b\in\umax(E\cap B'_1)$  for a $\beta$-block $E$, $E\cap\umax(B'_1)\ne\eps$, and $(a',c)\in\pr_{12}\relov$ for some $a'\in\umax(E\cap B'_1)$.  We need to prove this for arbitrary $a,b\in B'_1$ with $a\eqc\beta b$.

Take any $c'\in B'_2$ such that $(a,c')\in\pr_{12}\rel'$. If $c'=d$ we swap the roles of $c$ and $d$. If there exists $h^{ab}_{dc}$ with the required properties, this polynomial can also be chosen as $h^{ab}_{cd}$. Otherwise by Lemma~\ref{lem:good-polys}(3) there is a $\ov B$-preserving polynomial $g$ such that $g(c')=c,g(d)=d$. If, see Lemma~\ref{lem:as-moving-polys}, there is a polynomial that maps $c',c$ to $c$ and $d$ to $d$, we set $g$ to be that polynomial. 
Let $\relo'=\relo'(g(a),g(b),c,d)$ be as defined before Lemma~\ref{lem:polys-max}. We use the notation from Lemma~\ref{lem:polys-max}. 
By Lemma~\ref{lem:polys-max}(1) it suffices to prove that  
a tuple of the form $(a^*,a^*,c'',d'')$, $c'',d''\in C_2$, $c''\ne d''$ belongs to $\relo'$. Indeed, if this is the case, there is a $\ov B$-preserving polynomial $h$ of $\rel$ such that $h(g(a))=h(g(b))$ and $h(c)=c'',h(d)=d''$. Since $\rel$ is chained, there is also a $\ov B$-preserving polynomial $h'$ such that $h'(c'')=c, h'(d'')=d$. Set $g'=h'\circ h\circ g$. If $g$ is such that $g(c)=c$, then  $g'(a)=g'(b)$ and $g'(c)=c, g'(d)=d$, and we can set $h^{ab}_{cd}$ to be the idempotent power of $g'$. Otherwise by Lemma~\ref{lem:as-moving-polys} $g'(c)\ne d$ and there is a $\ov B$-preserving polynomial $h''$ such that $h''(g'(c))=c$ and $h''(d)=d$. As is easily seen, the idempotent power of the polynomial $h^{ab}_{cd}=h''\circ g'$ is as required.

Next, we show that a tuple of the form $(a^*,a^*,c'',d'')$ exists in $\relo'$. By Lemma~\ref{lem:polys-max}(5) $\relo'$ contains a tuple $(a'',b'',c'',d'')$ such that $a'',b''$ are as-maximal in $E\cap B'_1$ for a $\beta$-block $E$ and $c'',d''\in C_2$ and $c''\ne d''$. If $a''=b''$, we are done. Otherwise, as by Corollary~\ref{cor:amax-umax} every as-maximal element is also u-maximal and $(a'',c'')\in\pr_{12}\relov$, the elements $a'',b'',c'',d''$ satisfy all the conditions of Corollary~\ref{cor:claim7}.
Therefore there is a polynomial $h^{a''b''}_{c''d''}$ such that $h^{a''b''}_{c''d''}(a'')=h^{a''b''}_{c''d''}(b'')$ and $h^{a''b''}_{c''d''}(c'')=c'', h^{a''b''}_{c''d''}(d'')=d''$. Again, a tuple of the desired form belongs to $\relo'$.

Finally, we use polynomials $h^{ab}_{cd}$ to construct a single polynomial
that collapses $\beta$ on $E'=E\cap B'_1$ for every $\beta$-block $E$. 
Fix $c,d$ and $h^{ab}_{cd}$ for every pair $a,b\in B'_1$, $a\eqc\beta b$. 
Let $\vc Vk$ be the list of all such pairs, and if $V_\ell=\{a,b\}$ is the pair 
number $\ell$, $h^\ell$ denotes $h^{ab}_{cd}$. Take a sequence 
$1=\ell_1,\ell_2,\ldots$ such 
that $h^{(1)}=h^{\ell_1}$, $V_{\ell_2}$ is a subset of $h^{(1)}(\zA_1)$, 
and, for $s>2$, $V_{\ell_s}$ is a subset of 
the range of $h^{(s-1)}=h^{\ell_{s-1}}\circ\ldots\circ h^{\ell_1}$. Since 
$|\Im(h^{(s)})|<|\Im(h^{(s-1)})|$, there is $r$ such that 
$\Im(h^{(r)})$ contains no pair $V_\ell$ for any $\ell$. Therefore setting 
$h(x)=h_{\ell_r}\circ\ldots\circ h_{\ell_1}(x)$ we have that $h$ collapses 
all the pairs $V_\ell$, and $h$ acts identically on $\{c,d\}$. 
The result follows.
\end{proof}

\subsection{Separation and minimal sets}\label{sec:sep-min}

The results of this section hold for arbitrary algebras, not only for algebras from $\cV$.

We say that prime intervals $(\al,\beta)$ and $(\gm,\dl)$ \emph{cannot be 
separated} if $(\al,\beta)$ cannot be separated from 
$(\gm,\dl)$ and $(\gm,\dl)$ cannot be separated from $(\al,\beta)$. 
In this section we show a connection between the fact that two prime 
intervals cannot be separated, their types, and link congruences.

\begin{lemma}\label{lem:e-related}
Let $\zA$ be an algebra. 
\begin{itemize}
\item[(1)] 
If prime intervals $(\al,\beta)$ and $(\gm,\dl)$ 
in $\Con(\zA)$ are perspective, they cannot be separated. 
\item[(2)] 
If $\al\prec\beta$ and $\gm\prec\dl$ from $\Con(\zA)$ cannot be separated, 
then a set $U$ is a $(\al,\beta)$-minimal set if and only if it is a 
$(\gm,\dl)$-minimal set.
%
\end{itemize}
\end{lemma}

\begin{proof}
(1) Suppose $f$ separates $(\al,\beta)$ from $\gm,\dl)$. By Lemma~\ref{lem:min-set-separation}(1) $f$ can be assumed to be idempotent and $U=f(\zA)$ is an $(\al,\beta)$-minimal set. By Lemma~\ref{lem:perspective-intervals} $U$ is also a $(\gm,\dl)$-minimal set implying $f(\dl)\not\sse\gm$, a contradiction with the choice of $r$.

(2) Let $f$ be a  polynomial of $\zA$ 
such that $f(\zA)=U$ and $f(\beta)\not\sse\al$. Since $(\al,\beta)$ cannot be 
separated from $(\gm,\dl)$, we have $f(\dl)\not\sse\gm$ and therefore by Lemma~\ref{lem:minimal-sets}(6) $U$ 
contains a $(\gm,\dl)$-minimal set $U'$. If $U'\ne U$, there is a polynomial $g$
with $g\circ f(\dl)\not\sse\gm$ and $g\circ f(\zA)=U'$. In particular, 
$|g(U)|<|U|$, and so $g\circ f(\beta)\sse\al$; a contradiction with the assumption
that $(\gm,\dl)$ cannot be separated from $(\al,\beta)$.
\end{proof}

\begin{lemma}\label{lem:type-equal}
Let $\rel$ be a subdirect product of $\zA$ and $\zB$ and let 
$\al,\beta\in\Con(\zA)$, $\gm,\dl\in\Con(\zB)$ be such that $\al\prec\beta$, 
$\gm\prec\dl$, and $(\al,\beta),(\gm,\dl)$ cannot be separated. Then 
$\typ(\al,\beta)=\typ(\gm,\dl)$.
\end{lemma}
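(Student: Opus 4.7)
The starting point is Lemma~\ref{lem:e-related}(3), which furnishes an idempotent unary polynomial $f$ of $\rel$ with $U := f(\zA)$ an $(\al,\beta)$-minimal set and $V := f(\zB)$ a $(\gm,\dl)$-minimal set. This gives a single polynomial of $\rel$ selecting minimal sets on both sides simultaneously; for any polynomial $g$ of $\rel$, the composition $f \circ g$ is a polynomial whose $\zA$-action stays in $U$ and whose $\zB$-action stays in $V$. I will argue by contradiction, noting that both types lie in $\{\two,\three,\four,\five\}$ since $\rel$ omits type~$\one$.

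The key dichotomy is Abelian (type $\two$; see Lemma~\ref{lem:traces}(1) and Lemma~\ref{lem:as-type-2}) versus non-Abelian (types $\three,\four,\five$, which all carry a pseudo-meet by Lemma~\ref{lem:pseudo-meet}). First I would dispose of the case where one interval has type $\two$ and the other does not; by symmetry of $\rel$ in its two factors, assume $\typ(\al,\beta) = \two$ and $\typ(\gm,\dl) \in \{\three,\four,\five\}$. By Lemma~\ref{lem:pseudo-meet}, on the $\zB$-side there is a binary polynomial $q(x,y) = t(x,y,\ov{\bb})$ of $\zB$ realising a pseudo-meet on $V$ with a neutral element $1\in V$. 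Lifting each $\bb_i$ to a tuple $\bc_i \in \rel$ by subdirectness, form the binary polynomial $\wh q(x,y) := f(t(x,y,\ov{\bc}))$ of $\rel$, whose $\zB$-action coincides with $q$ on $V$.

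Fixing the second argument of $\wh q$ at a lift of some $a \in V$ with $(1,a)\in\dl\setminus\gm$ yields a unary polynomial of $\rel$ whose $\zB$-restriction collapses $\{1,a\}$ modulo $\gm$; a high power of it therefore collapses $\dl$ on $V$ into $\gm$. On the $\zA$-side, by Lemma~\ref{lem:as-type-2} the $\beta$-block within $U$ is module-like modulo $\al$ and carries a Mal'tsev polynomial, which precludes idempotent absorbing behaviour on any non-trivial $(\al,\beta)$-subtrace. Combining finitely many copies of the induced unary polynomial via the Mal'tsev operation available on $U$ then cancels any unwanted collapse on $\beta$ while retaining the collapse on $\dl$, producing a polynomial of $\rel$ that separates $(\gm,\dl)$ from $(\al,\beta)$, a contradiction.

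The remaining mismatches, $\{\three,\four\}$ versus $\five$ and $\three$ versus $\four$, are handled by parallel constructions: for $\{\three,\four\}$ versus $\five$ one uses the pseudo-join present only in types $\three,\four$, and for $\three$ versus $\four$ one uses the complementation operation distinguishing the Boolean case. The main obstacle throughout is ensuring that the constructed polynomial of $\rel$ collapses exactly one side while leaving the other untouched; this rests on using the module structure on the Abelian side, via Mal'tsev combinations, to neutralise any spurious collapse while preserving the targeted one.
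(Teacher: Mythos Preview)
Your overall framework matches the paper's: start from Lemma~\ref{lem:e-related}(3) to get a single idempotent polynomial $f$ with $f(\zA)=U$ an $(\al,\beta)$-minimal set and $f(\zB)=V$ a $(\gm,\dl)$-minimal set, lift the pseudo-meet on the non-Abelian side to a polynomial of $\rel$, and extract a separating polynomial. However, your execution of the type-$\two$ versus non-$\two$ case has a genuine gap. The assertion that the module structure on $N/\al$ ``precludes idempotent absorbing behaviour'' is false: constant maps are perfectly good idempotent polynomials on a module, so the unary polynomial $g(x)=\wh q(x,c)$ may well act as a constant on $N/\al$. Your remedy, ``combining finitely many copies of the induced unary polynomial via the Mal'tsev operation,'' does not help as stated: if $g$ acts on $N/\al$ as $x\mapsto \ld x+e$ with $\ld=0$, then any polynomial of $\rel$ built from copies of $g$ and constants still has linear part $0$ on $N/\al$, regardless of how the copies are combined.

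The paper supplies the missing ingredient. Writing the lifted pseudo-meet as $p$, one uses that $p(x,x)$ is the identity on $V$, hence $p(x,x)(\dl)\not\sse\gm$; non-separation then forces $p(x,x)(\beta)\not\sse\al$, so on $N/\al$ the polynomial $p$ is affine with coefficients summing to a unit. The paper then iterates $p(\cdot,y)$ exactly $q-1$ times (over $\GF(q)$) to force the $N/\al$-action to become the first projection while the $N'/\gm$-action remains the pseudo-meet; a short direct argument (the paper's Claim) then produces the separating polynomial. Your Mal'tsev idea \emph{can} be salvaged once this coefficient condition is in hand, by combining $\wh q(x,c)$ with $\wh q(c,x)$ rather than copies of a single one, but this key step is absent from your sketch. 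Finally, your one-line treatment of the non-Abelian mismatches is inadequate: the paper's arguments require specific constructions (e.g.\ $p(x,h(x))$ with $h$ the negation on the Boolean side, or $p(x,q(x,y))$ with $q$ the pseudo-join) together with a second appeal to non-separation to pin down the action of the lifted polynomial on the other trace, and these steps do not follow from the bare existence of pseudo-join or complementation.
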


\begin{proof}
We use Theorem~5.2(1) from \cite{Kiss97:easy} that states that for a finite algebra $\zC$ and $\al,\beta,\gm,\dl\in\Con(\zC)$ such that $\al\prec\beta,\gm\prec\dl$ and $\zC$ is $(\al,\beta)$- and $(\gm,\dl)$-minimal it holds that $\typ(\al,\beta)=\typ(\gm,\dl)$. Let $\al_1,\beta_1,\gm_2,\dl_2\in\Con(\rel)$ be congruences of $\rel$ given by $\al_1=\al\tm\zo_\zB, \beta_1=\beta\tm\zo_\zB, \gm_2=\zo_\zA\tm\gm, \dl_2=\zz_\zA\tm\dl$. Let also $\eta_1,\eta_2$ be the projection congruences of $\rel$, that is, $\eta_1=\zz_\zA\tm\zo_\zN, \eta_2=\zz_\zA\tm\zz_\zB$. Since $\zA \cong R\fac{\eta_1}$, we have $\typ_\zA(\al,\beta) = \typ_{R\fac\eta_1}(\al_1\fac{\eta_1},\beta_1\fac{\eta_1})$, and by Corollary~5.3 of \cite{Hobby88:structure} $\typ_\zA(\al,\beta)=\typ_R(\al_1,\beta_1)$. In a similar way $\typ_\zB(\gm,\dl)=\typ_R(\gm_2,\dl_2)$. Now, let $C$ be an $(\al_1,\beta_1)$-minimal set. By Lemma~\ref{lem:e-related}(2) it is also a $(\gm_2,\dl_2)$-minimal set. Let $\zC=\zR\red C$ be the algebra induced on $C$ by the polynomials of $R$. Then $\al_1\red C\prec \beta_1\red C$ and $\zC$ is $(\al_1\red C,\beta_1\red C)$-minimal by \cite[Lemma 2.13]{Hobby88:structure} and $\typ_\zR(\al_1,\beta_1)=\typ_\zC(\al_1\red C,\beta_1\red C)$ by definition. Similarly, $\zC$ is $(\gm_2\red C,\dl_2\red C)$-minimal and $\typ_\zR(\gm_2,\dl_2)=\typ_\zC(\gm_2\red C,\dl_2\red C)$. Then Theorem~5.2(1) from \cite{Kiss97:easy} applies to $\zC$ and gives $\typ_\zC(\al_1\red C,\beta_1\red C)=\typ_\zC(\gm_2\red C,\dl_2\red C)$.
\end{proof}

\section{Centralizers}\label{sec:centralizer}

\subsection{Centralizers and quasi-centralizers}\label{sec:definition-centralizer}

The results of Section~\ref{sec:centralizer} hold for arbitrary algebras. If $\zA$ is an algebra, $g\in\Pol_{n+1}(\zA)$, $n\in\nat$ and $a\in\zA$, then $g^a$ denotes 
the polynomial $g^a(\vc xn) = g(a,\vc xn)$. Recall that for 
$\al,\beta\in\Con(\zA)$, the \emph{centralizer} (see, eg.\  
\cite{Freese87:commutator})
$(\al:\beta)$ is the largest congruence $\th\in\Con(A)$
which centralizes $\beta$ modulo $\al$, i.e., satisfies the condition 
$C(\th,\beta;\al)$ given by
\begin{quote}
for any $n$, any $f\in\Pol_{n+1}(\zA)$, any $(a,b)\in\th$ and any $(c_i,d_i)\in\beta$, $i\in[n]$,
it holds that $f^a(\vc cn)\eqc\al f^a(\vc dn)$ if and only if $f^b(\vc cn)\eqc\al f^b(\vc dn)$.
\end{quote}
In \cite{Bulatov17:dichotomy-corr} we also introduced a somewhat 
related notion of \emph{quasi-centralizer} $\zeta(\al,\beta)$:
\begin{quote}
$(a,b)\in\zeta(\al,\beta)$ if for any $g\in\Pol_2(D)$, $g^a(\beta)\sse\al$ 
if and only if $g^b(\beta)\sse\al$.
\end{quote}
A relation basically identical to quasi-centralizer also appeared in 
\cite{Hobby88:structure}, but completely inconsequentially, they did not 
study it at all. Kearnes observed that $(\al:\beta)\sse\zeta(\al,\beta)$,
and later Willard \cite{Willard19:centralizer} demonstrated that the 
notions of the centralizer and the quasi-centralizer are equivalent at least 
in the case important for the purpose of this paper.
With Willard's permission we reproduce his proof here 
and will use the usual centralizer in the sequel.

\begin{prop}\label{pro:two-centralizers}
Let $\zA$ be a finite algebra, $\al,\beta\in\Con(\zA)$, $\al\prec\beta$. 
If $\typ(\al,\beta)\ne\one$, then $\zeta(\al,\beta)=(\al:\beta)$.
\end{prop}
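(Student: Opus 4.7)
The inclusion $(\al:\beta)\sse\zeta(\al,\beta)$ is immediate and is Kearnes' observation: given $(a,b)\in(\al:\beta)$ and $g\in\Pol_2(\zA)$, apply the centralizer condition $C(\{(a,b)\},\beta;\al)$ pointwise at each $(c,d)\in\beta$ to conclude that $g^a(\beta)\sse\al$ if and only if $g^b(\beta)\sse\al$.

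For the substantive direction $\zeta(\al,\beta)\sse(\al:\beta)$, my plan is to verify that $\zeta(\al,\beta)$ itself satisfies the centralizer condition $C(\zeta(\al,\beta),\beta;\al)$; since $(\al:\beta)$ is by definition the largest such congruence, the inclusion follows. I fix $(a,b)\in\zeta(\al,\beta)$, $f\in\Pol_2(\zA)$, $(c,d)\in\beta$, and by symmetry assume $f(a,c)\eqc\al f(a,d)$; the task is to derive $f(b,c)\eqc\al f(b,d)$. The case $(c,d)\in\al$ is trivial, so I assume $(c,d)\in\beta-\al$ and argue by contradiction: suppose $f(b,c)\not\eqc\al f(b,d)$. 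The plan is to construct $g\in\Pol_2(\zA)$ with $g^a(\beta)\sse\al$ and $g^b(\beta)\not\sse\al$, contradicting $(a,b)\in\zeta(\al,\beta)$.

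The construction of $g$ proceeds in two steps. First, by Lemma~\ref{lem:minimal-sets}(4,6) I choose $e\in\Polo(\zA)$ with $e(\zA)=U$ an $(\al,\beta)$-minimal set and $(e(f(b,c)),e(f(b,d)))$ in distinct $\al$-blocks of the trace $N$ of $U$, then replace $f$ by $e\circ f$; this preserves $f(a,c)\eqc\al f(a,d)$ in $U$ while placing $f(b,c),f(b,d)$ in distinct $\al$-blocks of $N$. Second, the type-specific structure of $N$ (Lemma~\ref{lem:traces}, Lemma~\ref{lem:pseudo-meet}) supplies the gadget: for $\typ(\al,\beta)=\two$ the module Mal'tsev operation on $N\fac\al$ extends to a polynomial $m$ of $\zA$, and I set $g(x,y)=m(f(x,y),f(x,c),f(b,c))$; for $\typ(\al,\beta)\in\{\three,\four,\five\}$ the pseudo-meet $p$ with neutral element $1\in N$ plays the analogous role via $g(x,y)=p(f(x,y),f(x,c))$ or a variant in which $f(b,c)$ is pulled toward $1$. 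In each case, the trace-level identities force $g^a$ to be constant on $N\fac\al$ while $g^b$ still separates $(c,d)$.

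The hard part will be to promote "$g^a$ constant on $N\fac\al$" to "$g^a(\beta)\sse\al$" globally. Because $g^a$ takes values in $U$ by construction, I apply Lemma~\ref{lem:minimal-sets}(2) to $g^a$: it forces the dichotomy that either $g^a(\beta\red U)\sse\al$ or $g^a$ witnesses a polynomial isomorphism of $U$ with $g^a(U)$; the latter is excluded since $g^a$ is non-injective on $N\fac\al$, so the former holds. From $g^a(\beta\red U)\sse\al$ I then propagate to $g^a(\beta)\sse\al$ using Lemma~\ref{lem:minimal-sets}(5), which realizes $\beta$ as the symmetric-transitive closure of $\al$ with polynomial images of a single trace pair, together with the polynomial isomorphism of all $(\al,\beta)$-minimal sets from Lemma~\ref{lem:minimal-sets}(1) to cover pairs living outside $U$. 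The non-type-\one\ assumption enters decisively precisely at this promotion step: it is what guarantees the rigid "constant or injective" dichotomy for polynomials restricted to $N\fac\al$ (coming from the vector-space, Boolean, lattice, or semilattice structure on the trace given by Lemma~\ref{lem:traces}), and in type~\one\ the trace carries only projection operations, so no such rigidity exists and the proof strategy breaks down.
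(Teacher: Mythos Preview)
Your construction has a real gap at the step you yourself flag as the hard part, but the problem appears earlier than you think. From your gadget $g(x,y)=m(f(x,y),f(x,c),f(b,c))$ (or the pseudo-meet variant) you obtain only $g^a(c)\eqc\al g^a(d)$ for the \emph{specific} witnessing pair $(c,d)$. The elements $c,d$ are arbitrary members of $\zA$; they need not lie in $U$, let alone in the trace $N$. So the assertion that ``$g^a$ is constant on $N\fac\al$'' (equivalently, non-injective on $N\fac\al$) is simply not established, and without it the Lemma~\ref{lem:minimal-sets}(2) dichotomy cannot be invoked. Even if you could get $g^a(\beta\red U)\sse\al$, the promotion via Lemma~\ref{lem:minimal-sets}(5) does not work as you describe: that lemma writes $\beta$ as a closure of $\al$ together with pairs $(h(u),h(v))$ for \emph{arbitrary} $h\in\Polo(\zA)$, so you would need $(g^a\circ h)(\beta\red U)\sse\al$ for every $h$, not just $g^a(\beta\red U)\sse\al$.

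The paper (following Willard) avoids a direct verification of $C(\zeta(\al,\beta),\beta;\al)$ altogether. For $\typ(\al,\beta)\in\{\three,\four,\five\}$ it proves instead the much weaker statement $\zeta(\al,\beta)\cap\beta=\al$ and then cites \cite[Proposition~3.4(4)]{Hobby88:structure} to conclude $C(\zeta(\al,\beta),\beta;\al)$; the point is that for a non-abelian prime quotient, centralizing modulo $\al$ is equivalent to meeting $\beta$ inside $\al$. For $\typ(\al,\beta)=\two$ it cites \cite[Lemma~2.6]{Kearnes16:finite}, which reduces membership in $(\al:\beta)$ to a permutation criterion on the minimal set. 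In both cases the witness polynomial is of the form $g(x,y)=p(e(x),e(y))$ or $g(x,y)=e(f(x,e(y)))$, i.e.\ it depends on $y$ only through $e(y)$; this is exactly what makes the promotion trivial, since then $g^a(\beta)=g^a(e(\beta))\sse g^a(\beta\red U)$. Your construction lacks this factoring, and that is why the argument stalls.
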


\begin{proof}
Let $\th=(\al:\beta)$. We first show $\th\sse\zeta(\al,\beta)$. Suppose 
$(a,b)\in\th$, $g\in\Pol_2(\zA)$, 
and $g^a(\beta)\sse\al$. Pick $(c,d)\in\beta$. Then
$g(a,c)=g^a(c)\eqc\al g^a(d)=g(a,d)$. As $C(\th,\beta;\al)$, $(a,b)\in\th$ 
and $(c,d)\in\beta$, we can replace the $a$’s with $b$’s to
get $g(b,c)\eqc\al g(b,d)$, i.e., $(g^b(c),g^b(d))\in\al$, proving 
$g^b(\beta)\sse\al$. The reverse implication is proved similarly, so 
$(a,b)\in\zeta(\al,\beta)$. This proves $\th\sse\zeta(\al,\beta)$.

Next we prove that $\zeta(\al,\beta)\sse\th$. Fix an $(\al,\beta)$-minimal set 
$U$ and a unary idempotent polynomial $e\in\Pol_1(\zA)$ satisfying  
$e(\zA)=U$. Now consider cases.

\medskip

\noindent
{\sc Case 1.} $\typ(\al,\beta)\ne\two$.

\smallskip

We will show $\zeta(\al,\beta)\cap\beta=\al$, as 
$C(\zeta(\al,\beta),\beta;\al)$ will then follow 
(\cite[Proposition 3.4(4)]{Hobby88:structure}), which will then imply 
$\zeta(\al,\beta)\le\th$. 
Clearly $\al\sse\zeta(\al,\beta)\cap\beta$, so it remains to prove the opposite 
inclusion. Since $\al\prec\beta$, it suffices to show that $\beta\not\sse\zeta(\al,\beta)$. Let $N$ be the unique $(\al,\beta)$-trace in $U$. Write 
$N=\{1\}\cup O$, where $N^2\sse\beta$ and $\{1\},O$ are the two 
$\al\red U$-classes contained in $N$, and for which there exists a 
pseudo-meet operation $p$, see Lemma~\ref{lem:pseudo-meet}, of 
$\zA\red U$ for $N$ with neutral element 
$1$. Thus $p$ is a binary polynomial of $\zA\red U$ satisfying 
(among other things) $p(1,x)=x$ and $p(o,x)\ne 1$ for all $x\in U$ 
and $o\in O$. Fix $o\in O$. Then, as $p(o,x)\ne 1$, it holds that $p^o(\beta)\sse\al$, while $p^1(x)=x$ for all $x\in N$, proving $(o,1)\in\beta-\zeta(\al,\beta)$.

\medskip

\noindent
{\sc Case 2.} $\typ(\al,\beta)=\two$.

\smallskip

Suppose $(a,b)\in\zeta(\al,\beta)$. To prove $(a,b)\in\th$, it suffices by 
the proof of \cite[Lemma 2.6]{Kearnes16:finite} to show that for all $f\in\Pol_2(\zA)$, 
$e\circ f^a\red U$ is a permutation if and only if $e\circ f^b\red U$ is a permutation. 
Suppose for concreteness that $e\circ f^a\red U$ is a permutation but 
$e\circ f^b\red U$ is not. Let $g(x,y)=e(f(x,e(y)))\in\Pol_2(\zA)$. Because 
$e\circ f^a\red U$ is a permutation, $e\red U$ is the identity map, and 
$\beta\red U\not\sse\al$, we get $g^a(\beta)\not\sse\al$. On the other 
hand, because $e\circ f^b\red U$ is not the identity map, we get 
$e\circ f^b(\beta\red U)\sse\al$ by Lemma~\ref{lem:minimal-sets}(2) and hence 
$g^b(\beta)\sse\al$, contradicting $(a,b)\in\zeta(\al,\beta)$.
\end{proof}

\subsection{Alignment}\label{sec:alignment}

In this and the next sections we 
prove several properties of the centralizer. The first one concerns 
properties of a relation with respect to centralizer blocks.

Let $\rel$ be a subdirect product of $\zA_1\tm\dots\tm\zA_n$, $i,j\in[n]$, 
and $\al_i\in\Con(\zA_i)$, 
$\al_j\in\Con(\zA_j)$. The coordinate positions $i,j$ are said to be 
\emph{$\al_i\al_j$-aligned in $\rel$} 
if, for any $(a,c),(b,d)\in\pr_{ij}\rel$, 
$(a,b)\in\al_i$ if and only if $(c,d)\in\al_j$. Or in other words, the link 
congruences of $\zA_i,\zA_j$ with respect to $\pr_{ij}\rel$ are no greater
than $\al_i,\al_j$, respectively, and the isomorphism between $\zA_i\fac{\lnk_i}$ and $\zA_j\fac{\lnk_j}$ induced by $\pr_{ij}\rel$ maps $\al_i\fac{\lnk_i}$-blocks to $\al_j\fac{\lnk_j}$.

\begin{lemma}\label{lem:delta-alignment}
Let $\rel$ be a subdirect product of 
$\zA_1\tm\zA_2$, $\al_i,\beta_i\in\Con(\zA_i)$, $\al_i\prec\beta_i$, 
for $i=1,2$. If $(\al_1,\beta_1)$ and $(\al_2,\beta_2)$ cannot be separated 
from each other, then the coordinate positions 1,2 are $\zeta(\al_1,\beta_1)\zeta(\al_2,\beta_2)$-aligned in $\rel$. If, in addition, $\typ(\al_1,\beta_1),\typ(\al_2,\beta_2)\ne\one$, then the coordinate positions 1,2 are $(\al_1:\beta_1)(\al_2:\beta_2)$-aligned in $\rel$.
\end{lemma}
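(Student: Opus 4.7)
The plan is to argue by contradiction using the quasi-centralizer reformulation of the centralizer given by Proposition~\ref{pro:two-centralizers}, which applies since the paper assumes all prime intervals omit type~\one. Thus $(\al_i:\beta_i)=\zeta(\al_i,\beta_i)$, so that $(a,b)\in(\al_1:\beta_1)$ means: for every $g\in\Pol_2(\zA_1)$, $g^a(\beta_1)\sse\al_1$ iff $g^b(\beta_1)\sse\al_1$. By symmetry it suffices to fix $(a,c),(b,d)\in\rel$ with $(a,b)\in(\al_1:\beta_1)$ and derive $(c,d)\in(\al_2:\beta_2)$; the converse direction is identical.

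The key idea is to lift a witness polynomial from $\zA_2$ to $\rel$ and then project it back onto $\zA_1$, using non-separation to move hypotheses between the coordinates. Suppose toward contradiction that $(c,d)\not\in\zeta(\al_2,\beta_2)$. Then there exists $g\in\Pol_2(\zA_2)$ with, say, $g^c(\beta_2)\sse\al_2$ but $g^d(\beta_2)\not\sse\al_2$ (the other case is symmetric). Write $g(x,y)=t(x,y,\vc ek)$ for a term $t$ and parameters $\vc ek\in\zA_2$; by subdirectness pick $e'_i\in\zA_1$ with $(e'_i,e_i)\in\rel$ and define
$$
G(X,Y)=t(X,Y,(e'_1,e_1)\zd (e'_k,e_k))\in\Pol_2(\rel),
$$
whose component-wise actions on $\zA_1,\zA_2$ are the polynomials $g_1(x,y)=t(x,y,\vc{e'}k)\in\Pol_2(\zA_1)$ and $g$ itself.

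Now consider the two unary polynomials of $\rel$ obtained by freezing the first argument at $(a,c)$ and at $(b,d)$: $f_1=G^{(a,c)}$ and $f_2=G^{(b,d)}$. On $\zA_1$ they act as $g_1^a$ and $g_1^b$, and on $\zA_2$ as $g^c$ and $g^d$. The pipeline is: from $f_1(\beta_2)=g^c(\beta_2)\sse\al_2$ and the assumption that $(\al_1,\beta_1)$ cannot be separated from $(\al_2,\beta_2)$ in $\rel$ we get $f_1(\beta_1)=g_1^a(\beta_1)\sse\al_1$; then the quasi-centralizer membership $(a,b)\in\zeta(\al_1,\beta_1)$ transfers this to $g_1^b(\beta_1)\sse\al_1$, i.e.\ $f_2(\beta_1)\sse\al_1$; and finally the assumption that $(\al_2,\beta_2)$ cannot be separated from $(\al_1,\beta_1)$ forces $f_2(\beta_2)=g^d(\beta_2)\sse\al_2$, contradicting the choice of $g$.

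The only non-routine step is setting up the lift $G\in\Pol_2(\rel)$ so that its two coordinate actions are controlled simultaneously; once that is done, the argument is a short relay that alternates between the two coordinates, using non-separation to cross from $\zA_2$ to $\zA_1$ and back, and using the quasi-centralizer equivalence in the middle to switch $a\rightsquigarrow b$. The main obstacle (really a bookkeeping one) is keeping the asymmetric definition of ``cannot be separated from'' oriented correctly in both crossings and making sure the hypothesis ``$(\al_1,\beta_1)$ and $(\al_2,\beta_2)$ cannot be separated from each other'' is used in both directions, as it must be for the argument to close.
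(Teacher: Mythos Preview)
Your proof is correct and follows essentially the same approach as the paper: assume a violating pair $(a,c),(b,d)\in\rel$, use Proposition~\ref{pro:two-centralizers} to extract a quasi-centralizer witness $g\in\Pol_2(\zA_2)$, extend it to a polynomial of $\rel$, and then combine the quasi-centralizer condition on $(a,b)$ with both directions of non-separation to reach a contradiction. The paper's version is more terse---it simply notes that $g^a(\beta_1)\sse\al_1$ iff $g^b(\beta_1)\sse\al_1$ and observes that in either case one of $G^{(a,c)},G^{(b,d)}$ is a separating polynomial---whereas you spell out the implication chain explicitly, but the content is the same.
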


\begin{proof}
Let us assume the contrary, that is, without loss of generality there are 
$a,b\in\zA_1$ and $c,d\in\zA_2$
with $(a,c),(b,d)\in\rel$, $(a,b)\in\zeta(\al_1,\beta_1)$, but 
$(c,d)\not\in\zeta(\al_2,\beta_2)$. Therefore there is $g(x,y)\in\Pol_2(\zA_2)$ 
such that $g^c(\beta_2)\sse\al_2$
but $g^d(\beta_2)\not\sse\al_2$, or the other way round.
Extend $g$ to a polynomial of $\rel$. We have 
$g^a(\beta_1)\sse\al_1$ if and only if $g^b(\beta_1)\sse\al_1$. Therefore, 
there is a polynomial of $\rel$ that separates $(\al_1,\beta_1)$ from 
$(\al_2,\beta_2)$ or the other way round, a contradiction. The second statement of the lemma follows from Proposition~\ref{pro:two-centralizers}.
\end{proof}

By \cite[Theorem 5.5]{Hobby88:structure} if $\typ(\al,\beta)\ne\one,\two$ then 
$(\al:\beta)\meet\beta\le\al$. This and Lemmas~\ref{lem:type-equal},\ref{lem:delta-alignment} 
imply the following

\begin{lemma}\label{lem:34-links}
Let $\rel$ be a subdirect product of $\zA$ and $\zB$ and let 
$\al,\beta\in\Con(\zA)$, $\gm,\dl\in\Con(\zB)$ be such that $\al\prec\beta$, 
$\gm\prec\dl$, and $(\al,\beta),(\gm,\dl)$ cannot be separated. Let also 
$\lnk_1,\lnk_2$ be the link congruences of $\zA,\zB$, respectively. If 
$\typ(\al,\beta)\ne\one,\two$ then $\lnk_1\meet\beta\le\al$, $\lnk_2\meet\dl\le\gm$.
\end{lemma}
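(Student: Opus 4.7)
The plan is to combine the alignment lemma just proved with the Hobby–McKenzie fact cited in the statement and with the type-preservation result of Lemma~\ref{lem:type-equal}.

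First I would show that $\lnk_1 \le (\al:\beta)$. Suppose $(a,b) \in \tol_1$, so there exists $c \in \zB$ with $(a,c),(b,c) \in \rel$. The pair $(c,c)$ trivially lies in $(\gm:\dl)$, so Lemma~\ref{lem:delta-alignment}, applied to the prime intervals $(\al,\beta)$ and $(\gm,\dl)$ (which, by hypothesis, cannot be separated), gives $(a,b) \in (\al:\beta)$. Thus $\tol_1 \sse (\al:\beta)$, and because $(\al:\beta)$ is a congruence (in particular transitive), passing to the transitive closure yields $\lnk_1 \sse (\al:\beta)$.

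Next I would intersect with $\beta$. Since $\typ(\al,\beta) \ne \two$ by hypothesis, the cited \cite[Lemma~4.14]{Hobby88:structure} gives $(\al:\beta) \meet \beta \le \al$. Combining, $\lnk_1 \meet \beta \le (\al:\beta) \meet \beta \le \al$, as required.

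For the symmetric statement $\lnk_2 \meet \dl \le \gm$, I would invoke Lemma~\ref{lem:type-equal}, which gives $\typ(\gm,\dl) = \typ(\al,\beta) \ne \two$, so \cite[Lemma~4.14]{Hobby88:structure} also yields $(\gm:\dl) \meet \dl \le \gm$. Then the same argument as above — applied in the other direction of the alignment — shows $\lnk_2 \sse (\gm:\dl)$, and intersection with $\dl$ finishes the proof. There is no real obstacle here: the whole proof is a short chain of implications assembling three already-established facts (alignment, the HM inequality, and type equality). The only point requiring a moment of care is noting that since cannot-be-separated is symmetric in the two intervals, Lemma~\ref{lem:delta-alignment} can be applied in either direction to handle both $\lnk_1$ and $\lnk_2$.
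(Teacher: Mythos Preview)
Your proof is correct and follows essentially the same route as the paper, which simply states that the lemma is a consequence of Lemma~\ref{lem:delta-alignment} together with \cite[Lemma~4.14]{Hobby88:structure}. Your explicit tolerance argument for $\lnk_1\sse(\al:\beta)$ just unpacks what ``$(\al:\beta)(\gm:\dl)$-aligned'' already means (the link congruences are bounded by the centralizers), and your invocation of Lemma~\ref{lem:type-equal} to handle the $\zB$-side makes explicit a step the paper leaves implicit.
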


\subsection{Centralizer and congruence blocks}\label{sec:congruence-centralizer}

In this section we prove several properties indicating relationship between
congruence blocks inside a centralizer block. All the algebras are assumed to be from~$\cV$.

\begin{lemma}\label{lem:central-mapping}
Let $\zA\in\cV$, $\al,\beta\in\Con(\zA)$ be such that $\al\prec\beta$ and 
$\typ(\al,\beta)=\two$, and so $\beta\le\zeta=(\al:\beta)$,
and let $B,C$ be $\beta$-blocks from the same $\zeta$-block such that $BC$ is 
a thin edge in $\zA\fac\beta$ or $B=C$. For any $b\in B,c\in C$ such that $bc$ is a thin 
edge of the same type (such $b,c$ exist by Lemma~\ref{lem:thin-properties}(1)), or any type if $B=C$, the polynomial $f(x)=x\cdot c$ if $b\le c$, $f(x)=t_{bc}(x,c)$ if $bc$ is 
majority, and $f(x)=h_{bc}(x,b,c)$ if $bc$ is affine, where $t_{bc},h_{bc}$ 
are the operations from Lemma~\ref{lem:op-s-on-affine}, is an injective 
mapping from $B\fac\al$ to $C\fac\al$ and $f(b)=c$.

Moreover, if $BC$ is a semilattice edge then for any $a\in B\fac\al$, $a\le f(a)$ and
$a\not\le d$ for any other $d\in C\fac\al$.
\end{lemma}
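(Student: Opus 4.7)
The plan is to verify $f(B) \subseteq C$ case-by-case and then reduce injectivity to a uniform application of the centralizer condition $C(\zeta, \beta; \al)$, which applies because $\typ(\al,\beta)=\two$ forces $\beta \le \zeta = (\al:\beta)$. Well-definedness is routine: since $x \eqc\beta b$ for every $x \in B$, in each case one of the defining identities $b \cdot c = c$, $t_{bc}(b,c) = c$, or $h_{bc}(b,b,c) = c$ (cf.\ Lemma~\ref{lem:op-s-on-affine}) yields $f(x) \eqc\beta c$, so $f(x) \in C$.

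For injectivity the plan is to construct, in each case, a binary polynomial $g(u,v)$ of $\zA$ with the two properties (i) $g(c,v) = f(v)$ and (ii) $g(b,v) \eqc\al v$ for every $v \in B$. Granting (i) and (ii), note that $(b,c) \in \zeta$ because $B$ and $C$ share a $\zeta$-block, while $(v_1,v_2) \in \beta$ whenever $v_1,v_2 \in B$, so the centralizer condition yields
\[
f(v_1) \eqc\al f(v_2) \iff g(c,v_1) \eqc\al g(c,v_2) \iff g(b,v_1) \eqc\al g(b,v_2) \iff v_1 \eqc\al v_2,
\]
which is exactly injectivity on $B/\al$. I take $g(u,v) = h_{bc}(v,b,u)$ in the affine case, $g(u,v) = t_{bc}(v,u)$ in the majority case, and $g(u,v) = v \cdot u$ in the semilattice case; in the first two cases (ii) becomes $h_{bc}(v,b,b) \eqc\al v$, respectively $t_{bc}(v,b) \eqc\al v$, both immediate from Lemma~\ref{lem:op-s-on-affine} applied to the thin affine edge $(b,v)$ inside $B/\al$ guaranteed by Lemma~\ref{lem:as-type-2}. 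The semilattice case is the subtlest point: by Lemma~\ref{lem:good-operation} either $v \cdot b = v$ or $(v, v \cdot b)$ is a thin semilattice edge in $\zA$, and if the latter held with $v \not\eqc\al v \cdot b$ then Lemma~\ref{lem:thin-properties} would push this down to a nontrivial thin semilattice edge inside the $\beta/\al$-block $B/\al$ of $\zA/\al$, contradicting $\typ(\al,\beta)=\two$ via Lemma~\ref{lem:type2-condition} applied to $(\zz_{\zA/\al}, \beta/\al)$.

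For the moreover clause, $f(a) \cdot a \eqc\al f(a)$ is direct since both elements lie in $C$ and the alternative offered by Lemma~\ref{lem:good-operation}---a nontrivial thin semilattice edge---is ruled out modulo $\al$ by the same type-$\two$ argument, now applied within the block $C$. The dual identity $a \cdot f(a) \eqc\al f(a)$ requires two nested centralizer applications: first $C(\zeta,\beta;\al)$ with $(b,a) \in \beta \le \zeta$, $(c, f(a)) \in \beta$, and polynomial $u \cdot v$ reduces the claim to $b \cdot f(a) \eqc\al c$; a second application with $(c,b) \in \zeta$ and $(c, f(a)) \in \beta$ reduces it further to $c \cdot f(a) \eqc\al c$, which once again holds by the type-$\two$ argument inside $C$. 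Uniqueness---that $a \le b'$ for some $b' \in C/\al$ forces $b' \eqc\al f(a)$---follows from $C(\zeta,\beta;\al)$ applied with $(a,b') \in \zeta$ and $(b',c) \in \beta$ to the polynomial $u \cdot v$, combined with $b' \cdot c \eqc\al b'$. The main obstacle throughout is the semilattice case, where no direct Mal'tsev-like identity for $\cdot$ is available; its role is played by the rigidity that the type-$\two$ hypothesis imposes on the semilattice operation within a $\beta$-block via Lemma~\ref{lem:type2-condition}.
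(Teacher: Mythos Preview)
Your proof is correct and follows essentially the same approach as the paper: both reduce injectivity to the centralizer condition via a binary polynomial $g$ built from the relevant edge operation, using Lemma~\ref{lem:op-s-on-affine} together with the type-\two\ constraint (Lemma~\ref{lem:type2-condition}) to verify that $g^b$ acts as the identity on $B\fac\al$. Your direct use of $C(\zeta,\beta;\al)$ is slightly cleaner than the paper's route through the quasi-centralizer (which needs the extra idempotent polynomial $f'$ onto a minimal set to pass from a single collapsed pair to $g^c(\beta)\sse\al$), and you also supply an argument for the ``moreover'' clause, which the paper's proof omits.
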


\begin{proof}
Suppose $f(a_1)\eqc\al f(a_2)$ for some $a_1,a_2\in B$, $a_1\not\eqc\al a_2$. 
If $b\le c$, let $s(x,y)=x\cdot y$; if $bc$ is a thin majority edge, let $s(x,y)=t_{bc}(x,y)$; and, if $bc$ is a thin affine edge, let $s(x,y)=h_{bc}(x,b,y)$. In each case we have $s(x,c)=f(x)$. Also, $s(x,b)\eqc\al x$ for all $x\in B$. Indeed, if $b\le c$ then $x\le s(x,b)=x\cdot b\eqc\al x$ for $B\fac\al$ is a module and contains no semilattice edge. If $bc$ is a thin majority edge then $s(x,b)=t_{bc}(x,b)\eqc\al x$ by Lemma~\ref{lem:op-s-on-affine}(1). If $bc$ is a thin affine edge then $s(x,b)=h_{bc}(x,b,b)\eqc\al x$ by Lemma~\ref{lem:op-s-on-affine}(2). Now let $g(x,y)=s(y,x)$. We have $g^c(a_1)=f(a_1)\eqc\al f(a_2)=g^c(a_2)$. Since $(b,c)\in\zeta$ and $(a_1,a_2)\in\beta$, this implies $g^b(a_1)\eqc\al g^b(a_2)$, so $a_1\eqc\al a_2$.

Finally, to prove the last statement of the lemma, suppose that there are $c,d\in C$, $c\ne d$, such that $a\le c$ and $a\le d$. Consider the term operation $t(x,y)=x\cdot y$. Then $t^a(c)=c$ and $t^a(d)=d$. However, since $C$ is a module, it contains no semilattice edges, and hence $t^c(c)=c=t^c(d)$. This contradicts the assumption $(a,c)\in\zeta$.
\end{proof}

\begin{corollary}\label{cor:centralizer-multiplication}
Let $\zA\in\cV$, $\al,\beta\in\Con(\zA)$ be such that $\al\prec\beta$ and 
$\typ(\al,\beta)=\two$, and so $\beta\le\zeta=(\al:\beta)$. Let $a,b,c\in\zA$ be such that $a\eqc{(\al:\beta)} b\eqc\beta c$. Then $ab\eqc\al ac$.
\end{corollary}

\begin{proof}
We have $ab\eqc\beta ac$ and $a\le ab,a\le ac$. If $a\eqc\beta b$, then $a\eqc\al ab\eqc\al c$, because $B\fac\al$, where $B$ is the $\beta$-class containing $a,b,c$ is a module and contains no semilattice edges. Otherwise
by the last statement of Lemma~\ref{lem:central-mapping} $ab\eqc\al ac$.
\end{proof}

Another straightforward application of Lemma~\ref{lem:central-mapping} is
the following

\begin{corollary}\label{cor:equal-centralizer}
Let $\zA\in\cV$, $\al,\beta\in\Con(\zA)$ be such that $\al\prec\beta$ and 
$\typ(\al,\beta)=\two$, and let $\zeta=(\al:\beta)$. Then for any 
$\beta$-blocks $B_1,B_2$ that belong to the same $\zeta$-block $C$ and 
such that $B_1\sqq^{asm}B_2$ and $B_2\sqq^{asm} B_1$ in $C\fac\beta$, 
$|B_1\fac\al|=|B_2\fac\al|$.
\end{corollary}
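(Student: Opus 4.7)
The plan is to apply Lemma~\ref{lem:central-mapping} iteratively along the asm-paths witnessing mutual connectivity of $B_1$ and $B_2$ in $C\fac\beta$. By hypothesis, there is a sequence of $\beta$-blocks $B_1 = D_0, D_1, \ldots, D_k = B_2$, all contained in the $\zeta$-block $C$, such that each consecutive pair $D_{i-1} D_i$ is a thin edge in $C\fac\beta$ (of semilattice, majority, or affine type).

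For each such step, I would invoke Lemma~\ref{lem:thin-properties}(1) applied to the quotient map $\zA \to \zA\fac\beta$ in order to find $b_{i-1} \in D_{i-1}$ and $b_i \in D_i$ such that $b_{i-1} b_i$ is a thin edge in $\zA$ of the same type as $D_{i-1} D_i$. Since $D_{i-1}, D_i \sse C$ both lie in the single $\zeta$-block $C$, the hypotheses of Lemma~\ref{lem:central-mapping} are met, and that lemma produces a unary polynomial $f_i$ of $\zA$ inducing an injective map $D_{i-1}\fac\al \hookrightarrow D_i\fac\al$. Composing $f_k \circ \cdots \circ f_1$ yields an injection $B_1\fac\al \hookrightarrow B_2\fac\al$, so $|B_1\fac\al| \le |B_2\fac\al|$. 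The reverse inequality follows by running the same argument along an asm-path witnessing $B_2 \sqq_{asm} B_1$ in $C\fac\beta$.

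The main obstacle is essentially bookkeeping rather than mathematical substance: one must check at each step that the intermediate blocks $D_i$ genuinely remain inside $C$ (so that Lemma~\ref{lem:central-mapping} can be invoked, since that lemma crucially requires the two $\beta$-blocks to sit in a common $\zeta$-block), and that the thin-edge-lift supplied by Lemma~\ref{lem:thin-properties}(1) produces the edge of the same type required by the case analysis of Lemma~\ref{lem:central-mapping}. The former is immediate from the fact that the asm-path is taken inside $C\fac\beta$; the latter is delivered verbatim by Lemma~\ref{lem:thin-properties}(1).
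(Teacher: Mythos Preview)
Your proposal is correct and matches the paper's intent: the paper gives no explicit proof, stating only that the corollary is ``another straightforward application of Lemma~\ref{lem:central-mapping}'', and your iterative application of that lemma along the two asm-paths is precisely that application. The bookkeeping concerns you raise (that the intermediate $\beta$-blocks remain in $C$, and that the lifted edge retains its type) are handled exactly as you say, and the composition of the resulting polynomials does give the required injections on $\al$-classes since each $f_i$ sends the $\beta$-block $D_{i-1}$ into $D_i$.
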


\section{Collapsing polynomials}

In this section we introduce and prove the existence of polynomials that 
collapse all prime intervals in congruence lattices of factors of a subproduct,
except for a set of factors that cannot be separated from each other.

We start with an auxiliary statement.

\begin{lemma}\label{lem:maximal-minimal}
Let $\zA\in\cV$ and $\al,\beta\in\Con(\zA)$ be such that $\al\prec\beta$ 
and $\typ(\al,\beta)=\two$; let $B$ be a $\beta$-block containing more than one $\al$-block, and $a,b\in B$ with $a\sqq^{asm}_B b$. 
\begin{itemize} 
\item[(1)] 
There exists a polynomial $f$ such that $f(a)=b$ and $f(\beta\red B)\not\sse\al$.
\item[(2)] 
If $a$ belongs to an $(\al,\beta)$-trace, so does $b$. In particular, every element from $\umax(B)$ belongs to an $(\al,\beta)$-trace.
\item[(3)] 
Let $a\eqc\al b$, $a\sqq^{asm}b$ in $a\fac\al$, and $N$ an $(\al,\beta)$-trace with 
$a\in N$. Then there is a polynomial $f$
such that $f(a)=b$, $N'=f(N)$ is an $(\al,\beta)$-trace containing $b$, and $N'\fac\al=N\fac\al$.
\end{itemize}
\end{lemma}

\begin{proof}
(1) follows from Lemma~\ref{lem:central-mapping}.

(2) Suppose $a\in N$, an $(\al,\beta)$-trace. Then the polynomial $f$ constructed in item (1) satisfies $f(a)=b$ and for any $a'\in N$ with $a'\fac\al\ne a\fac\al$ it holds that $f(a')\not\eqc\al f(a)$. Therefore $f(N)$ is a trace and $b\in f(N)$. The second claim of item (2) follows from the observation that by Lemma~\ref{lem:minimal-sets}(5) $B$ contains an $(\al,\beta)$-trace.

(3) It suffices to consider the case when $ab$ is a thin edge. Let $f$ be the polynomial constructed in Lemma~\ref{lem:central-mapping}. We argue as in the proof of Lemma~\ref{lem:central-mapping} that $f(x)\eqc\al x$ for $x\in B$. If $a\le b$ then $x\le f(x)=xb\eqc\al x$ for $B\fac\al$ is a module and contains no semilattice edge. If $ab$ is a thin majority edge then $f(x)=t_{ab}(x,b)\eqc\al x$ by Lemma~\ref{lem:op-s-on-affine}(1). If $ab$ is a thin affine edge then $f(x)=h_{ab}(x,a,b)\eqc\al h_{ab}(x,a,a)\eqc\al x$ by Lemma~\ref{lem:op-s-on-affine}(2) as $a\eqc\al b$. 
\end{proof}

Let $\rel$ be a subdirect product of $\vc\zA n\in\cV$, $\beta_j\in\Con(\zA_j)$, and $B_j$ a $\beta_j$-block, $j\in[n]$. Let also $i\in[n]$, and 
$\al,\beta\in\Con(\zA_i)$ be such that $\al\prec\beta\le\beta_i$. We call an idempotent unary polynomial 
$f$ of $\rel$ \emph{$\al\beta$-collapsing for $\ov\beta,\ov B$} if 
\begin{itemize}
\item[(a)]
$f$ is $\ov B$-preserving;
\item[(b)]
$f(\zA_i)$ contains an $(\al,\beta)$-minimal set, in particular, $f(\beta)\not\sse\al$; 
\item[(c)]
$f(\dl\red{B_j})\sse\gm\red{B_j}$ for every 
$\gm,\dl\in\Con(\zA_j)$, $j\in[n]$, with $\gm\prec\dl\le\beta_j$, and such 
that $(\al,\beta)$ can be separated from $(\gm,\dl)$ or $(\gm,\dl)$
can be separated from $(\al,\beta)$.
\end{itemize}

%
%

\begin{theorem}\label{the:collapsing}
Let $\rel$ be a subdirect product of $\vc\zA n\in\cV$ and choose 
$\beta_j\in\Con(\zA_j)$ and a $\beta_j$-block $B_j$ for each $j\in[n]$; let $\rel$ be
chained with respect to $\ov\beta,\ov B$ and $\relov=\rel\cap\ov B$, $B'_j=\pr_j\relov$, $j\in[n]$. Let also $I\sse[n]$, and 
$\al\in\Con(\pr_I\rel)$ be such that $\al\prec\ov\beta_I$. 
Then if $\ov B'_I\fac\al$ contains a 
nontrivial as-component, then there exists an $\al\ov\beta_I$-collapsing 
polynomial $f$ for $\ov\beta,\ov B$. Moreover, $f$ can be chosen to
satisfy any one of the following conditions:
\begin{itemize}
\item[(d)] 
for any $(\al,\ov\beta_I)$-subtrace $\{\ba,\bb\}\sse\amax(\pr_I\relov)$ with $\bb\in\as(\ba)$ and any $J\sse[n]$, the polynomial $f$ can be chosen such that $\ba\fac\al,\bb\fac\al\in f(\pr_I\rel)\fac\al$ and $f(\ov B_J)\cap\umax(\ov B'_J)\ne\eps$;
\item[(e)] 
if $\typ(\al,\ov\beta_I)=\two$, for any $\ba\in\umax(\relov)$ the
polynomial $f$ can be chosen such that $f(\ba)=\ba$;
\item[(f)] 
if $\typ(\al,\ov\beta_I)=\two$, a tuple $\ba\in\relov$ is such that $\ba\in\umax(\rel'')$, where 
$\rel''=\{\bb\in\rel\mid \pr_I\bb\eqc\al\pr_I\ba\}$ and 
$\{\ba_0,\bb_0\}\sse\amax(\pr_I\relov)$ is an $(\al,\ov\beta_I)$-subtrace such 
that $\pr_I\ba=\ba_0$ and $\bb_0\in\as(\ba)$, then the polynomial $f$ can be chosen 
such that $f(\ba)=\ba$ and $\bb'\in f(\pr_I\rel)$ for some $\bb'\eqc\al \bb_0$. 
\end{itemize}
\end{theorem}

\begin{proof}
First, we prove that an $\al\beta_i$-collapsing polynomial exists.
By Lemma~\ref{lem:chaining-inher}(1) we may assume $|I|=1$. Suppose $I=\{1\}$, let $C$ be a nontrivial 
as-component of $B'_1\fac\al$. Take $a,b\in B'_1$ such that $a\fac\al,b\fac\al\in C$ and $a\fac\al\ne b\fac\al$. Let $f$ be a $\ov B$-preserving polynomial of $\rel$ such that $f(a\fac\al)=a\fac\al, f(b\fac\al)=b\fac\al$. Such a polynomial exists, as the identity mapping satisfies these conditions. Let $M(f)$ denote the set of triples $(j,\gm,\dl)$ such that $j\in[n]$, $\gm,\dl\in\Con(\zA_j)$, $\gm\prec\dl\le\beta_j$, and $f(\dl\red{B_j})\sse\gm$. Choose $f$ for which $M(f)$ is maximal (under inclusion). Note that $f$ can be replaced with its idempotent power, so it can be assumed idempotent. We show that $f$ is $\al\beta_1$-collapsing.  

Suppose there are $j\in[n]$ and $\gm,\dl\in\Con(\zA_j)$ such that 
$\gm\prec\dl\le\beta_j$, $(\al,\beta_1),(\gm,\dl)$ can be separated, and $(j,\gm,\dl)\not\in M(f)$. Then, since $\rel$ is chained, there is a unary $\ov B$-preserving polynomial $f_{j\gm\dl}$ of $\rel$ such that $f_{j\gm\dl}(a\fac\al)=a\fac\al$, $f_{j\gm\dl}(b\fac\al)=b\fac\al$ (this implies $f_{j\gm\dl}(\beta_1)\not\sse\al$) and $f_{j\gm\dl}(\dl\red{B'_j})\sse\gm\red{B'_j}$. Indeed, if $(\al,\beta_1)$ can be separated from $(\gm,\dl)$, by Lemma~\ref{lem:good-polys}(3) $(a\fac\al, b\fac\al)\in T_\al(a\fac\al,b\fac\al,\lb \cU(\gm,\dl,\ov B))$, and $f_{j\gm\dl}$ can be chosen to be the polynomial witnessing this. If $(\gm,\dl)$ can be separated from $(\al,\beta_1)$, then $f_{j\gm\dl}$ exists by Theorem~\ref{the:relative-symmetry}.
Let $g$ be the idempotent power of $f_{j\gm\dl}\circ f$. We have $g(a\fac\al)=a\fac\al, g(b\fac\al)=b\fac\al$, $M(f)\sse M(g)$, but $g(\dl\red{B_j})\sse\gm$, a contradiction with the choice of  $f$. 

Next we prove that any one of conditions (d)--(f) can be satisfied. 
For condition (d), that for any $(\al,\beta_1)$-subtrace $\{a,b\}\sse\amax(\pr_i\relov)$ with $b\in\as(a)$ the polynomial $f$ can be chosen such that $a\fac\al,b\fac\al\in f(\zA_i)\fac\al$ follows from what is proved above, since if $\{a,b\}$ is an $(\al,\beta_1)$-subtrace such that $a,b\in\amax(\pr_i\relov)$ and 
$b\in\as(a)$, then $a\fac\al,b\fac\al$ are members of a nontrivial 
as-component of $B'_1\fac\al$ and the result follows from the chained condition and Lemma~\ref{lem:good-polys}(3). It remains to show that, for any $J\sse[n]$, $f$ can also be chosen to satisfy $f(\ov B_J)\cap\umax(\pr_J\relov)\ne\eps$. 

Again, by Lemmas~\ref{lem:separation-separation},~\ref{lem:chaining-inher}(1) we may assume $I\cap J=\eps$ and $|J|=1$. Suppose $J=\{2\}$ Let $j=2$ and $c\in f(B'_2)$. Let also $C=\as(a)\fac\al$. Similar to the proof of Theorem~\ref{the:relative-symmetry} by $\relo^*\sse(\zA_1\fac\al)^2\tm\zA_2\tm\rel$ we denote the relation generated by 
$\{(a\fac\al,b\fac\al,c,\ba)\}\ \cup\{(x\fac\al,x\fac\al,y,\bz)\mid \bz\in\rel,\bz[1]=x,\bz[2]=y\}$,
where $\ba$ is an arbitrary element from $\relov$. Let 
$\relo=\pr_{123}(\relo^*\cap(B'_1\fac\al\tm B'_1\fac\al\tm B'_2\tm\ov B))$.
Observe that $\relo$ is exactly the set of triples $(g(a)\fac\al,g(b)\fac\al,g(c))$ 
for  $\ov B$-preserving unary polynomials $g$ of $\rel$. Also, the polynomial $g$ in such a tuple can be assumed to be $\al\beta_i$-collapsing, because $(g(a)\fac\al,g(b)\fac\al,g(c))=(g(f(a))\fac\al,g(f(b))\fac\al,g(f(c)))$. By what has been proved, $C^2\sse\pr_{12}\relo$ and therefore $(a\fac\al,b\fac\al)$ is an as-maximal element of $\pr_{12}\relo$. By the Maximality Lemma~\ref{lem:to-max}(5) there is $d\in\umax(\pr_3\relo)=\umax(B'_2)$ such that $(a\fac\al,b\fac\al,d)\in\relo$. Thus, for any $\al\beta_i$-collapsing polynomial $g$ such that $g(a\fac\al)=a\fac\al, g(b\fac\al)=b\fac\al$, and $c\in g(B'_2)$, there is a $\al\beta_i$-collapsing polynomial $g_c$ with $g_c(a\fac\al)=a\fac\al, g_c(b\fac\al)=b\fac\al$, and $g_c(c)\in\umax(B'_2)$. Let $c_0\in\umax(B'_2)$ be such that there is a $\al\beta_i$-collapsing polynomial $h$ with $h(a\fac\al)=a\fac\al,h(b\fac\al)=b\fac\al$, and $c_0\in h(B'_2)$. Construct a sequence $c_0,c_1,\dots$ by setting $c_{\ell+1}=g_{c_\ell}(c_\ell)$. Clearly, all the $c_\ell$ are u-maximal in $B'_2$. There are $r<s$ such that $c_r=c_s$. Then for the polynomial $h=g_{c_{s-1}}\circ\dots\circ g_{c_r}$ it holds that $h(c_r)=c_r$. The idempotent power of $h$ satisfies the desired conditions.

Now, suppose that $\typ(\al,\beta_1)=\two$. We will use 
Lemma~\ref{lem:maximal-minimal}, so we need to identify some 
congruences of $\rel$ related to $\al\beta_1$-collapsing polynomials. Consider the 
congruences $\al'=\al\tm\beta_2\tms\beta_n$ and 
$\ov\beta=\beta_1\tm\beta_2\tms\beta_n$, and a maximal congruence
$\al^*$ of $\rel$, $\al'\le\al^*\le\ov\beta$, such that $\al^*\red{\relov}=\al'\red{\relov}$.
Let $b^1_1,b^2_1\in B'_1$ be such that $b^1_1\fac\al\ne b^2_1\fac\al$, and $b^1_1,b^2_1$ belong to an as-component of
$B'_1$. Such elements exist by Lemma~\ref{lem:factor-as}(2).
By the Maximality Lemma~\ref{lem:to-max}(3,5) these elements can be extended to $\bb_1,\bb_2\in\amax(\relov)$ such that $\bb_1[1]=b^1_1,\bb_2[1]=b^2_1$ and $\bb_1,\bb_2$ belong to the same as-component of $\relov$. 
Let $\beta^*=\Cgg\rel{\al^*\cup\{(\bb_1,\bb_2)\}}$. 
Since $\rel$ is chained with respect to $\ov\beta,\ov B$, applying (Q1) for $I=[n]$, 
by Lemma~\ref{lem:good-polys}(3) for any $\bc_1,\bc_2\in\relov$ with $\bc_1[1]\not\eqc\al\bc_2[1]$,
it holds that $\beta^*\le\Cgg\rel{\al^*\cup\{\bc_1,\bc_2\}}$. Therefore
$\al^*\prec\beta^*$. 

As $f(\beta^*)\not\sse\al^*$ for the idempotent polynomial $f$ found in the first part of the proof of Theorem~\ref{the:collapsing} 
the set $f(\rel)$ contains an $(\al^*,\beta^*)$-minimal set. Observe that $\beta^*\red{\relov}=\ov\beta\red{\relov}$. Indeed, as $\typ(\al,\beta_1)=\two$, $B'_1\fac\al$ is a module, in particular, $B'_1\fac\al$ is an as-component. Since $\rel$ is chained, for any $c\fac\al,d\fac\al\in B'_1\fac\al$ there is a $\ov B$-preserving polynomial $g$ such that $g(b_1^1\fac\al)=c\fac\al, g(b_1^2\fac\al)=d\fac\al$, and so $\bc\eqc{\beta^*}\bd$ for any $\bc,\bd\in\relov$ such that $\bc[1]=c,\bd[1]=d$.  
Therefore, by 
Lemma~\ref{lem:maximal-minimal}(1) for any $\ba\in\umax(\relov)$
there exists a $\ov B$-preserving polynomial $h$ such that $hf(\ba)=\ba$ and 
$hf(\rel)$ contains an $(\al^*,\beta^*)$-minimal set.  
The polynomial $hf$ is as required for condition (e).

To prove condition (f) first notice that $\umax(\rel'')\sse\umax(\relov)$. Indeed, as $\typ(\al,\beta_1)=\two$, every $\al'$-block, where $\al'$ is as defined above, contains a u-maximal element from $\relov$ and Lemma~\ref{lem:u-max-congruence} applies. As $I=\{1\}$, we assume $\ov\beta_I=\beta_1$ and $\ba_0=a_0,\bb_0=b_0$. Now, choose an $\al\beta_1$-collapsing polynomial
$f$ such that $a',b'\in f(\zA_1)$ for some $a'\eqc\al a_0,b'\eqc\al b_0$.  Such a polynomial exists by part (d). 
Then, since $\ba$ is u-maximal in an $\al^*$-block, by 
Lemma~\ref{lem:maximal-minimal}(3) similar to the previous case $f$ can be chosen such that $f(\ba)=\ba$.
\end{proof}

\section{The Congruence Lemma}\label{sec:congruence}

The main result of this section is the Congruence Lemma~\ref{lem:affine-link}. We start with introducing three closure properties of algebras and their subdirect products. 

Let $\zC$ be a subalgebra of $\zA\in\cV$. A subset $B\sse\zC$ is \emph{as-closed} in $\zC$ if for 
any $a,b\in\zC$ with $a\in\umax(B)$ and $a\sqq^{as}_\zC b$, it holds that $b\in B$. Similarly, the set $B$ is \emph{s-closed} in $\zC$ if for 
any $a,b\in\zC$ with $a\in\umax(B)$ and $a\le b$, it holds that $b\in B$. Thus, an as-closed (s-closed) set is just a set of elements closed under thin semilattice and affine edges (respectively, thin semilattice edges). Note that the subalgebra $\zC$ is very important here, as we normally want to `contain' as-closed (s-closed) sets within some subalgebra, and thin edges do not respect subalgebras.

Let $\rel$ be a subdirect product of $\vc\zA n\in\cV$ and $\relo$ a subalgebra of $\rel$. 
By $\Cg{\relo}$ we denote the congruence of $\rel$ generated by pairs of elements from $\relo$. That is, $\Cg{\relo}$ is the smallest congruence such that $\relo$ is a subset of a $\Cg{\relo}$-block, denote it $\Bl(\relo)$. The subalgebra $\relo$ is said to be \emph{polynomially closed} in $\rel$ if $\relo$ is as-closed in $\Bl(\relo)$.

\begin{remark}\label{rem:as-closed}
Polynomially closed subalgebras of Mal'tsev algebras are blocks of  congruences.
In the general case the structure of polynomially closed subalgebras is more 
intricate. The intuition 
(although not entirely correct) is that if some block $B$ of a congruence 
$\beta$  
contains several as-components, a polynomially closed subalgebra contains 
some of them and has empty intersection with the rest. However, since this
is true only for factor sets, and we do not even consider non-as-maximal
elements, the actual structure is more `fractal'.
\end{remark}

The next lemma follows from the definitions and 
Lemma~\ref{lem:thin-properties}, and the fact that congruences are invariant under polynomials.

\begin{lemma}\label{lem:poly-closed}
Let $\zA\in\cV$ and let $\rel$ be a subdirect product of $\vc\zA n\in\cV$.
\begin{itemize}
\item[(1)] 
$\rel$ is polynomially closed in $\rel$ and $\zA$ is as-closed and s-closed in $\zA$.
\item[(2)] 
For any congruence $\beta\in\Con(\rel)$ and a $\beta$-block $\relo$, the subalgebra $\relo$ is polynomially closed in $\rel$.
\item[(3)] 
Let $\relo_1,\relo_2$ be subalgebras of $\rel$,  $\relo_1,\relo_2$ polynomially closed in $\rel$, and $\umax(\relo_1)\cap\umax(\relo_2)\ne\eps$. 
Then $\relo_1\cap\relo_2$ is polynomially closed in $\rel$. 

In particular, let $\beta\in\Con(\rel)$ and $\reli$ a $\beta$-block such that $\umax(\relo_1)\cap\umax(\reli)\ne\eps$. Then $\relo_1\cap\reli$ is polynomially closed in $\rel$.

Let $\zB$ be a subalgebra of $\zA$, and let $\zC_1,\zC_2$ be subalgebras of $\zB$ as-closed (s-closed) in $\zB$ and such that $\umax(\zC_1)\cap\umax(\zC_2)\ne\eps$. Then $\zC_1\cap\zC_2$ is as-closed (respectively, s-closed) in~$\zB$. 
\item[(4)] 
Let $\zB$ be a subalgebra of $\zA$ and a subalgebra $\zC\sse\zB$ as-closed (s-closed) in $\zB$. Then for any $\beta\in\Con(\zA)$, the algebra  $\zC\fac\beta$ is as-closed (respectively, s-closed) in $\zB\fac\beta$.
\item[(5)] 
Let $\rel_i$, $i\in[k]$, be a subdirect product of some algebras from $\cV$,  and $\relo_i$ polynomially closed in $\rel_i$, $i\in[k]$. Let
$\rel,\relo$ be conjunctive-defined through $\vc\rel k$ and $\vc\relo k$, respectively, 
by the same conjunctive formula $\Phi$; that is, 
$\rel=\Phi(\vc\rel k)$ and $\relo=\Phi(\vc\relo k)$. In other words $\rel$ consists of all tuples that are satisfying assignments of $\Phi$ which uses the $\rel_i$'s as atoms, while $\relo$ is obtained in the same way only replacing the $\rel_i$'s with the $\relo_i$'s.  
Suppose that for every atom $\relo_i(x_{j_1}\zd x_{j_\ell})$ it holds that  $\umax(\pr_{\{x_{j_1}\zd x_{j_\ell}\}}\relo)\sse\umax(\relo_i)$. Then $\relo$ is polynomially closed in $\rel$.
\item[(6)] 
Let $\rel_i$ be a subdirect product of some algebras from $\cV$, $\relo_i$ subalgebra of $\rel_i$, and $\reli_i$ a subalgebra of $\relo_i$ as-closed (s-closed) in $\relo_i$, $i\in[k]$. Let
$\rel,\relo$, and $\reli$ be pp-defined through $\vc\rel k$, $\vc\relo k$, and $\vc\reli k$, respectively, by the same pp-formula $\exists\ov x\Phi$; that is, 
$\rel=\exists\ov x\Phi(\vc\rel k)$, $\relo=\exists\ov x\Phi(\vc\relo k)$, and $\reli=\exists\ov x\Phi(\vc\reli k)$. Let also $\relo'=\Phi(\vc\relo k)$ and $\reli'=\Phi(\vc\reli k)$ and, and suppose that for every atom $\reli_i(x_{j_1}\zd x_{j_\ell})$ it holds that  $\umax(\pr_{\{x_{j_1}\zd x_{j_\ell}\}}\reli')\sse\umax(\reli_i)$. Then $\reli$ is as-closed (respectively, s-closed) in~$\relo$.
\end{itemize}
\end{lemma}

\begin{proof}
(1) is straightforward. (2) holds because if $\relo$ is a congruence block of $\beta$, then clearly $\Bl(\relo)=\relo$.

For (3) it suffices to observe that, as $\umax(\relo_1)\cap\umax(\relo_2)\ne\eps$, by Lemma~\ref{lem:u-max-congruence} it holds that $\umax(\relo_1\cap\relo_2)\sse\umax(\relo_1)\cap\umax(\relo_2)$. 
We have $\Cg{\relo_1\cap\relo_2}\le\Cg{\relo_1}\meet\Cg{\relo_2}$ and $\Bl(\relo_1\cap\relo_2)\sse\Bl(\relo_1)\cap\Bl(\relo_2)$. Therefore for any $\ba\in\umax(\relo_1\cap\relo_2)$ and $\bb\in\Bl(\relo_1\cap\relo_2)$  such that $\ba\sqq^\as \bb$ in $\Bl(\relo_1\cap\relo_2)$, it holds that $\bb\in\relo_1$ and $\bb\in\relo_2$.

The statement about as-closed (s-closed) subalgebras is straightforward because by Lemma~\ref{lem:u-max-congruence} $\umax(\zC_1\cap\zC_2)\sse\umax(\zC_1)\cap\umax(\zC_2)$.

Part (4) follows from Lemmas~\ref{lem:thin-properties} and~\ref{lem:u-max-congruence}.

(5) Suppose that $\Phi$ involves $n$ variables $\vc xn$. Take $\ba\in\umax(\relo)$ and $\bb\in\Bl(\relo)$ such that $\ba\sqq^{as}\bb$ in $\Bl(\relo)$. Consider some atoms $\rel_i$, $\relo_i$ of $\Phi$; let $X$ be the set of variables involved in $\rel_i,\relo_i$, and set $\ba_i=\pr_X\ba,\bb_i=\pr_X\bb$. By the Maximality Lemma~\ref{lem:to-max}(4,6) $\ba_i\in\umax(\relo_i)$ and $\ba_i\sqq^{as}_{\rel_i}\bb_i$. 
As is easily seen, $\pr_X\Bl(\relo)\sse\Bl(\relo_i)$, and since $\relo_i$ is polynomially closed, $\bb_i\in\relo_i$. As this holds for every $i\in[k]$, $\bb\in\relo$.

(6) Suppose again that $\Phi$ involves $n$ variables $\vc xn$ and  $x_{m+1}\zd x_n$ are quantified. Take $\ba\in\umax(\reli)$, and $\bb\in\relo$ such that $\ba\sqq^{as}_\relo\bb$.  By the assumptions of the lemma and the Maximality Lemma~\ref{lem:to-max}(3,5) there are $\ba'\in\umax(\reli')$ and $\bb'\in\relo'$ such that $\ba=\pr_{[m]}\ba', \bb=\pr_{[m]}\bb'$, and $\ba'\sqq^{as}_{\relo'}\bb'$. Consider an atom $\relo_i$ (respectively, $\reli_i$) of $\Phi$; let $X$ be the set of variables involved in $\relo_i,\reli_i$, and set $\ba_i=\pr_X\ba',\bb_i=\pr_X\bb'$. By the Maximality Lemma~\ref{lem:to-max}(4,6) $\ba_i\in\umax(\reli_i)$ and $\ba_i\sqq^{as}_{\relo_i}\bb_i$. Since $\reli_i$ is as-closed, $\bb_i\in\reli_i$. As this holds for every $i\in[k]$, $\bb'\in\reli'$, and so $\bb\in\reli$. For s-closeness the proof is identical replacing $\sqq^{as}$ with $\le$.
\end{proof}

To explain what Lemma~\ref{lem:affine-link} (the Congruence Lemma) 
amounts to saying consider this: let 
$\relo\sse\zA'\tm\zB'$ be a subdirect product and let the link congruence of 
$\zA'$ be the equality relation. Then, clearly, $\relo$ is the graph of 
a mapping $\sg:\zB'\to\zA'$, and the kernel of this mapping is the link
congruence $\eta$ of $\zB'$ with respect to $\relo$. Suppose now that $\relo$
is a subalgebra of $\rel$, a subdirect product of $\zA\tm\zB$ such that 
$\zA'$ is a subalgebra of $\zA$ and $\zB'$ is a subalgebra of $\zB$. Then
the restriction of the link congruence of $\zA$ with respect to $\rel$ to $\zA'$
does not have to be the equality relation, and similarly the restriction of
the link congruence of $\zB$ to $\zB'$ does not have to be $\eta$, even if $\relo=\rel\cap(\zA'\tm\zB')$. Most 
importantly, the restriction of $\Cgg\zB\eta$, the congruence of $\zB$ generated by 
$\eta$, to $\zB'$ does not have to be $\eta$. The Congruence 
Lemma~\ref{lem:affine-link} shows, however, that this is exactly what 
happens when $\rel,\relo$ and $\zA',\zB'$ satisfy some additional conditions, 
such as being chained and polynomially closed.

\begin{lemma}[The Congruence Lemma]\label{lem:affine-link}
Let $\rel$ be a subdirect product of $\vc\zA n\in\cV$, 
$\beta_i$ a congruence of $\zA_i$ and let $B_i$ be 
a $\beta_i$-block, $i\in[n]$. Also, let $\rel$ be chained 
with respect to $\ov\beta,\ov B$ and $\relov=\rel\cap\ov B$, $B'_i=\pr_i\relov$. 
Let $I_0\sse[n]$ (without loss of generality assume say, $I_0=[\ell]$) and $\al\in\Con(\pr_{I_0}\rel)$ be such that $\al\prec\ov\beta_{I_0}=\prod_{i\in I_0}\beta_i$, let $I\sse[n]-I_0$, $I'=I\cup I_0$, $J=[n]-I'$,
and let $\relo$ be a subalgebra of $\relov$ 
polynomially closed in $\rel$ and such that $E_1=\pr_{I_0}\relo$ contains an as-component $C$ of $\ov B'_{I_0}$ and $\relo\cap\umax(\relov)\ne\eps$.
We consider $\rel,\relov$, and $\relo$ as subdirect products of $\pr_{I_0}\rel\tm\pr_I\rel\tm\pr_J\rel$, $\pr_{I_0}\relov\tm\pr_I\relov\tm\pr_J\relov$, and $\pr_{I_0}\relo\tm\pr_I\relo\tm\pr_J\relo$, respectively. Let $\relov\fac\al=\relov\fac{\al\tm\zz_{\ell+1}\tms\zz_n}$, $\relo\fac\al=\relo\fac{\al\tm\zz_{\ell+1}\tms\zz_n}$, and let $\lnk'_1,\lnk'_2$ and $\lnk^Q_1,\lnk^Q_2$ be the link congruences of $\pr_{I'}\relov\fac\al$ and $\pr_{I'}\relo\fac\al$, respectively, and $E_2=\pr_I\relo$. Let also $E^C_2=\relo[C\fac\al]=\{\pr_I\ba\mid\ba\in\relo, \pr_{I_0}\ba\fac\al\in C\fac\al\}$.
Then either 
\begin{itemize}
\item[(1)] 
$C\fac\al\tm\umax(E^C_2)\sse\pr_{I'}\relo\fac\al$, or 
\item[(2)] 
there is $\eta\le\ov\beta_I$, maximal (under inclusion) congruence of $\pr_I\rel$ with $\eta\red{\umax(E^C_2)}\sse\lnk^\relo_2$, and $\gm=\Cgg{\pr_I\rel}{\umax(E^C_2)}\vee\eta$, such that
$\eta\prec\gm\le\beta_I$ and the intervals $(\al,\ov\beta_{I_0})$ and $(\eta,\gm)$ cannot be separated. 
\end{itemize}
Moreover, in case (2) $\pr_{I'}\relo\fac\al$ is the graph of a mapping 
$\vf: E_2\to E_1\fac\al$ such that the kernel $\lnk^Q_2$ of $\vf$ is the restriction of $\eta$ on $E_2$, and for any $\ov B$-preserving polynomial $f$ such that $f(\ov\beta_{I_0}\red{\ov B_{I_0}})\not\sse\al$ it holds that $f(\gm\red{\ov B_I})\not\sse\eta$.\footnote{This property is somewhat stronger than non-separability. The non-separability of $(\al,\ov\beta_{I_0})$ and $(\eta,\gm)$ only implies that $f(\gm)\not\sse\eta$ without any restrictions on $f(\gm\red{\ov B_I})$.}
\end{lemma}

We prove the lemma in three steps. First, we assume that $\relo=\relov$ (Lemma~\ref{lem:star-prime}), then we assume that $\relo$ is a congruence block of $\rel$ (Lemma~\ref{lem:etaQ}), and finally we will use polynomial closeness to prove Lemma~\ref{lem:affine-link} itself. In all of these intermediate lemmas we use the notation from Lemma~\ref{lem:affine-link} and assume that the conditions of Lemma~\ref{lem:affine-link} hold. Throughout the proof we will assume that\\[2mm]
(a) $|I_0|=|I|=|J|=1$, say, $I_0=\{1\}$, $I=\{2\}$, $J=\{3\}$, and so $\rel$ is viewed as a ternary relation, a subdirect product of $\zA_1,\zA_2,\zA_3$ ($\relov$ and $\relo$ will also be assumed ternary); and\\[1mm]
(b) $\al=\zz_1$ everywhere except the first part of the proof of Lemma~\ref{lem:affine-link} itself.\\[2mm]
The first assumption does not restrict generality, by Lemma~\ref{lem:chaining-inher}(1) and because it does not affect the congruences of $\pr_I\rel$ and $\Cg{\relo}$, relations $\rel$ and $\relo$ viewed as ternary relations satisfy the conditions of the lemma, and the conclusion of the lemma is true for $\rel,\relo$ regardless of a partitioning of the subdirect product. The second assumption requires caution, because while the chaining condition still holds by Lemma~\ref{lem:chaining-inher}(2), $\relo\fac\al$ is not necessarily polynomially closed. However, in the proof we will use the polynomial closeness of $\relo$ only once, and can assume $\al=\zz_1$ elsewhere.  

\begin{lemma}\label{lem:star-prime}
If $\relo=\relov$, then the conclusion of Lemma~\ref{lem:affine-link} holds.
\end{lemma}

\begin{proof}
Let $\rel''=\pr_{12}\relov$. Since $\rel$ is chained, if $\lnk'_1$ is not the equality relation or $|C|=1$,  by Lemma~\ref{lem:good-polys}(3) 
$C^2\sse\lnk'_1$. In this case $C\tm\umax(\rel''[C])\sse\rel''$ by Proposition~\ref{pro:umax-rectangular} 
and we obtain option (1) of Lemma~\ref{lem:affine-link} for $\relov$. So, suppose that 
$\lnk'_1$ is the equality relation and $|C|>1$. Then $\rel''$ is the graph
of a mapping $\vf: B'_2\to B'_1$, note that $\lnk'_2$ is the kernel
of $\vf$. 

\medskip

{\sc Claim 1.}
Let $\th\in\Con(\rel)$, $\th\le\ov\beta$, be such that $\th\red{B'_2}\not\sse\lnk'_2$. Then there is a $\th$-block $D$ such that $D'=D\cap\umax(B^C_2)\ne\eps$, $D'\sse\umax(B^C_2)$, and $\th\red{D'}\not\sse\lnk'_2$.

\medskip

Suppose that $(e,e')\in\th\red{B'_2}-\lnk'_2$. Then $(a,e),(b,e')\in\rel''$ for some $a,b\in B'_1$ and $a\ne b$. Since $\rel$ is chained  with respect to $\ov\beta,\ov B$, and by Lemma~\ref{lem:good-polys}(3) for any $a',b'\in C$ there is a $\ov B$-preserving polynomial $f$ such that $f(a)=a',f(b)=b'$. On the other hand, $f(e)\eqc\th f(e')$. Let $\rel^\dg$ be the algebra generated by $(\rel''\cap(C\tm B'_2))\fac{\zz_1\tm\th}$ in $B_1\tm B_2\fac\th$. By what is proved $\rel^\dg$ is linked and by Proposition~\ref{pro:umax-rectangular} applied to $\rel^\dg$ there exists a $\th$-block $D$, u-maximal in $B^C_2\fac\th$, such that for any $a'\in C$ there is a $c'\in D$ with $(a',c')\in\rel''$. By the Maximality Lemma~\ref{lem:to-max}(3) $c'$ can be chosen to be u-maximal in $D\cap B'_2$. That $D'\sse\umax(B^C_2)$ follows from Lemmas~\ref{lem:factor-as}(2) and~\ref{lem:u-max-congruence}.

\medskip

Pick an arbitrary maximal (under inclusion) congruence $\eta\le\beta_2$ of $\zA_2$ with $\eta\red{\umax(B^C_2)}\sse\lnk'_2$, where $B^C_2=\rel''[C]$ (note that in the assumptions of Lemma~\ref{lem:star-prime} $B^C_2$ plays the role of $E^C_2$). Let $\gm$ denote the congruence constructed in Lemma~\ref{lem:affine-link}, that is, $\gm=\Cgg{\zA_2}{\umax(B^C_2)}\join\eta$. 
We prove that $\lnk'_2=\eta\red{B'_2}$, that $\eta\prec\gm$, and that $(\zz_1,\beta_1)$, $(\eta,\gm)$ cannot be separated. 
This clearly implies option (2) of Lemma~\ref{lem:affine-link} for $\relov$. 

\medskip

{\sc Claim 2.}
$\lnk'_2=\eta\red{B'_2}$

\medskip

Note first that by Claim~1 $\eta\red{B'_2}\sse\lnk'_2$. Suppose that $\lnk'_2\ne\eta\red{B'_2}$. 
We show that in this case there is a congruence $\th$ of $\zA_2$ that is strictly greater than $\eta$ and still satisfies the condition $\th\red{\umax(B^C_2)}\sse\lnk'_2\red{\umax(B^C_2)}$.  
There are $c,d\in B'_2$, $(c,d)\not\in\eta$ and $a\in B'_1$ such that $(a,c),(a,d)\in\rel''$ (as $(c,d)\in\lnk'_2$ and $\lnk'_2$ is the kernel of $\vf$).  
Let $\th=\Cgg{\zA_2}{\{(c,d)\}}\join\eta$. We claim that $\th\red{\umax(B^C_2)}\sse\lnk'_2\red{\umax(B^C_2)}$, and, as $\th$ is strictly greater than $\eta$, it contradicts the choice of $\eta$. Firstly, notice that if $e,e'$ are u-maximal in the same $\th$-block then $(e,e')\in\lnk'_2$. Indeed, since $\rel$ is chained, for
any $\th$-block $D\sse B_2$ and any $e,e'\in\umax(D\cap B'_2)$ there are
$e=e_1\zd e_{k+1}=e'\in B'_2$, $\vc{e'}k\in B'_2$, $e_i\eqc\eta e'_i$ for $i\in[k]$, and $\ov B$-preserving polynomials 
$\vc fk$ such that $f_i(\{c,d\})=\{e'_i,e_{i+1}\}$. As 
$(a,c),(a,d)\in\rel''$, for every $i\in[k-1]$, $(e_i,e_{i+1})\in\lnk'_2$,
and therefore $(e,e')\in\lnk'_2$. The result now follows from Claim~1.

\medskip

{\sc Claim 3.} 
For all $\th\in\Con(\zA_2)$ with $\eta\prec\th\le\beta_2$ the intervals 
$(\zz_1,\beta_1),(\eta,\th)$ cannot be separated, and for any $\ov B$-preserving polynomial $f$ such that $f(\beta_1\red{B_1})\not\sse\al$ it also holds that $f(\th\red{B_2})\not\sse\eta$.
 
\medskip

Suppose that for some $\th\in\Con(\zA_2)$ with $\eta\prec\th\le\beta_2$ 
Claim~3 is not true.

Let $c,d\in\umax(B^C_2)$ with $(c,d)\in\th-\eta$ and $a,b\in B'_1$ such that $(a,c),(b,d)\in\rel''$. We can assume that $a,b\in C$. By Corollary~\ref{cor:max-min-set} $\{a,b\}$ is a $(\zz_1,\beta_1)$-subtrace. Since $\rel$ is chained, if either $(\zz_1,\beta_1)$ can be separated
from $(\eta,\th)$, or $(\eta,\th)$ can be separated from $(\zz_1,\beta_1)$, by Theorem~\ref{the:collapsing}(d) 
there is a $\ov B$-preserving polynomial $f$ 
such that $f(a)=a, f(b)=b$, and $f(\th\red{B'_2})\sse\eta$, a contradiction with the choice of $\eta$. If there is a $\ov B$-preserving polynomial $g$ such that $g(\beta_1\red{B_1})\not\sse\al$ but $g(\th\red{B_2})\sse\eta$, we obtain $g(a)=a', g(b)=b'$, and then by chaining and Lemma~\ref{lem:good-polys}(3) there is a $\ov B$-preserving polynomial $g'$ with $g'(a')=a,g'(b')=b$. Then we set $f=g\circ g'$.
Apply $f$ to the pairs $(a,c),(b,d)$ to obtain $(a,f(c)),(b,f(d))\in\rel''$. Since $f(\th\red{B'_2})\sse\eta\red{B'_2}\sse\lnk'_2$, we obtain $a\eqc{\lnk'_1}b$,
a contradiction with the assumption that $(c,d)\not\in\lnk'_2$.
Claim~3 is proved.

\medskip

Now we use $\lnk'_2=\eta\red{B'_2}$ to show that $\eta\prec\gm$. Clearly, $\eta\le\gm$ and, by the choice of $\eta,\gm$, $\eta\ne\gm$. Again, by the choice of $\eta$ for any $\th\in\Con(\zA_2)$ with $\eta<\th\le\beta_2$ there is a pair $(c,d)\in\th\red{\umax(B^C_2)}-\eta$. Let $(a,c),(b,d)\in\rel''$ for some $a,b\in B'_1$. Note that $a\ne b$, because $(c,d)\not\in\lnk'_2$. We argue similarly to the proof of Claim~1. Since $\rel$ is chained with respect to $\ov\beta,\ov B$ and by Lemma~\ref{lem:good-polys}(3), for any $a',b'\in C$ there is a $\ov B$-preserving polynomial $f$ such that $f(a)=a'$, $f(b)=b'$. Let $\rel^\dg$ be the algebra generated by $\rel''\cap(C\tm B'_2))\fac{\zz_1\tm\th}$ in $B_1\tm B_2\fac\th$. By what is proved $\rel^\dg$ is linked and by Proposition~\ref{pro:umax-rectangular} $\umax(\rel''[C])$ is a subset of a $\lnk'_2\vee\th\red{B'_2}$-block. Since $\lnk'_2\red{\umax(B'_2)}=\eta\red{\umax(B'_2)}\sse\th$, it follows that $\umax(\rel''[C])$ is contained in a $\th$-block, and therefore $\gm\sse\th$.
\end{proof}

Next, we assume that $\relo$ is a congruence block. Note that in this case polynomial closeness is trivial and the remaining conditions of Lemma~\ref{lem:affine-link} are assumed to be true.

\begin{lemma}\label{lem:etaQ}
Let in the notation of Lemma~\ref{lem:affine-link} $\relo\sse\relov$ be a block of $\xi\in\Con(\rel)$ with $\xi\le\ov\beta$, $\relo\in\umax(\relov\fac )$. Then the conclusion of Lemma~\ref{lem:affine-link} holds.
\end{lemma}

\begin{proof}
By Lemma~\ref{lem:star-prime} Lemma~\ref{lem:affine-link} holds for $\relov$ in place of $\relo$. Suppose first that option (2) of the lemma holds for $\relov$. Then $\rel''=\pr_{12}\relov$, and therefore $\relo''=\pr_{12}\relo$, as well, are the graphs of mappings, and we only need to verify that there are congruences $\eta,\gm$ satisfying the desired conditions. Let $B^C_2=\rel''[C]$. Let $\eta'$ be a maximal congruence such that $\eta'\red{\umax(B^C_2)}\sse\lnk'_2$, and $\eta'\le\beta_2$ satisfying the conditions of item (2) of Lemma~\ref{lem:affine-link} for $\rel'$. Also let $\gm'=\Cgg{\zA_2}{\umax(B^C_2)}\join\eta'$. As $\umax(E_2)\sse\umax(B'_2)$, $\umax(E^C_2)\sse\umax(B^C_2)$, and $\lnk^\relo_2=\lnk'_2\cap E^2_2$, we have $\eta'\red{\umax(E^C_2)}\sse\lnk^\relo_2$. Therefore for $\Lambda=\{\th\in\Con(\rel)\mid \th\red{\umax(E^C_2)}\sse\lnk^\relo_2,\th\le\beta_2\}$, $\Lambda'=\{\th\in\Con(\rel)\mid \th\red{\umax(B^C_2)}\sse\lnk'_2,\th\le\beta_2\}$, we have $\Lambda'\sse\Lambda$. Hence, a congruence $\eta$ maximal with condition $\eta\red{\umax(E^\relo_2)}\sse\lnk^\relo_2$ and $\eta\le\beta_2$ can be chosen such that $\eta'\le\eta$.  Since option (2) holds for $\relov$, $\eta'\prec\gm'$. Let $\th\in\Con(\zA_2)$ be such that $\eta\prec\th\le\gm$. As $\eta'\prec\gm'$, it also holds that $\gm'=\Cgg{\zA_2}{(c,d)}\join\eta'$ for any $(c,d)\in(\umax(E^C_2))^2\cap(\th-\eta)$ (recall that $\umax(E^C_2)\sse\umax(B^C_2)$ and therefore $(c,d)\in\gm'$). Therefore $\gm'\le\th$. By the definition of $\gm'$ it holds that $\umax(B^C_2)$ is in a single $\th$-block, hence, $\gm\le\th$, implying $\eta\prec\gm$. Also, $\gm'\meet\eta=\eta'$, because $\eta'\le\gm'\meet\eta$ and $\eta'\prec\gm'$, and $\gm'\join\eta=\gm$, because $\umax(E^C_2)\sse\umax(B^C_2)$ and $\gm'\le\th$ for any $\th$ with $\eta\prec\th\le\gm$, as observed above. Therefore, the intervals $(\eta',\gm')$ and $(\eta,\gm)$ are perspective. By Lemma~\ref{lem:e-related} these intervals cannot be separated. Since $(\zz_1,\beta_1)$ and $(\eta',\gm')$ cannot be separated, it implies that $(\zz_1,\beta_1)$ and $(\eta,\gm)$ cannot be separated as well.

So, suppose that $C\tm\umax(\rel''[C])\sse\rel''$ and pick an arbitrary maximal (under inclusion) congruence $\eta$ of $\zA_2$ with $\eta\red{\umax(E^C_2)}\sse\lnk^\relo_2$. If $\eta=\beta_2$, we are done. Indeed, in this case $\lnk^Q_2\red{\umax(E^C_2)}=(\umax(E^C_2))^2$, and, since $C\sse E_1$ and $C$ is an as-component of $E_1$, we have $C\tm\umax(E^C_2)\sse\relo''$ by Proposition~\ref{pro:umax-rectangular}. Also, if, more generally, $C$ is a subset of a $\lnk^Q_1$-block, then we are done for the same reason. Thus, assume $\eta\ne\beta_2$ and $C$ is not a subset of a $\lnk^Q_1$-block.  

\medskip

{\sc Claim 1.}
For any $\th\in\Con(\zA_2)$, $\eta\prec\th\le\beta_2$ the intervals $(\zz_1,\beta_1)$ and $(\eta,\th)$ cannot be separated, and for any $\ov B$-preserving polynomial $f$ such that $f(\beta_1\red{B_1})\not\sse\al$ it also holds that $f(\th\red{B_2})\not\sse\eta$.

\medskip

Suppose that for some $\th\in\Con(\zA_2)$ with $\eta\prec\th\le\beta_2$ 
Claim~1 is not true.
We prove that in this case $C\tm\umax(E^C_2)\sse\relo''$, a contradiction with our assumptions.

By the choice of $\eta$ we have $\th\red{\umax(E^C_2)}\not\sse\lnk^\relo_2$. Therefore, there are $c,d\in\umax(E^C_2)$ with $(c,d)\in\th-\lnk^\relo_2$. Let $a,b\in E_1$ be such that $a,b\in C$ and $(a,c,e),(b,d,e')\in\relo$ for some $e,e'\in B'_3$. By Corollary~\ref{cor:max-min-set} $\{a,b\}$ is a $(\zz_1,\beta_1)$-subtrace. If $(\zz_1,\beta_1)$ can be separated from $(\eta,\th)$ or vice versa, by Theorem~\ref{the:collapsing}(d), there is a $\ov B$-preserving idempotent polynomial $g$ of $\rel$ with $g(a)=a$, $g(b)=b$, $g(c)\eqc\eta g(d)$. If $(\zz_1,\beta_1)$ and $(\eta,\th)$ cannot be separated, but there is a $\ov B$-preserving polynomial $f$ such that $g(\beta_1\red{B_1})\not\sse\al$ but $g(\th\red{B_2})\sse\eta$, we obtain $f(a)=a', f(b)=b'$, and then by chaining and Lemma~\ref{lem:good-polys}(3) there is a $\ov B$-preserving polynomial $f'$ with $f'(a')=a,f'(b')=b$. Then we set $g=f\circ f'$. Thus, in both cases we have tuples $(a,g(c),g(e)),(b,g(d),g(e'))\in\rel'$ with $g(c)\eqc\eta g(d)$.
Although this looks like an indication that $\eta\red{B^C_2}\not\sse\lnk^\relo_1$, the tuples $(a,g(c),g(e)),(b,g(d),g(e'))$ do not necessarily belong to $\relo$, and we have to make further steps.

As $\rel$ is chained, for any $a',b'\in C$ there is a $\ov B$-preserving polynomial $h$ with $h(a)=a', h(b)=b'$. Note also $(hg(a),hg(c),hg(e)))\eqc\xi(hg(b),hg(d),hg(e')))$. Consider the subalgebra $\rela$ of the product $\relov\fac\xi\tm B'_2\fac\eta\tm B'_1$ given by 
\[
\rela=\{((x,y,z)\fac\xi,y\fac\eta,x)\mid (x,y,z)\in\relov\},
\]
as a subdirect product of $\pr_{12}\rela$ and $B'_1$. By what is shown above, for any $a',b'\in C$ there is $(\relp,c')\in\pr_{12}\rela$ such that $(\relp,c',a'),(\relp,c',b')\in\rela$. Therefore $C$ is a subset of a block of the link congruence of $\rela$, and by  Proposition~\ref{pro:umax-rectangular} for any $(\relp,c')\in\umax(\rela^{-1}(C))$ it holds that $\{(\relp,c')\}\tm C\sse\rela$. This means that for any $a',b'\in C$ there are $c',d'\in E_2$ and $e_1,e'_1\in B'_3$ such that $(a',c',e_1),(b',d',e'_1)\in\relo$ and $c'\eqc\eta d'$, a contradiction with the assumption that $C$ is not contained in a class of $\lnk^\relo_1$. Claim~1 is proved.

\medskip

Since $C\tm\umax(\rel''[C])\sse\rel''$ and $(\zz_1,\beta_1)$, $(\eta,\th)$ cannot be separated, by Lemmas~\ref{lem:34-links},\ref{lem:type-equal} $\typ(\zz_1,\beta_1)=\typ(\eta,\th)=\two$, for any $\eta\prec\th\le\beta_2$. Therefore, the $\beta_1$-block $B_1$ is a module, and so is $B'_1$. Hence, $B'_1=C=E_1$. Also, this implies that $E^C_2\fac{\lnk^\relo_2}=E_2\fac{\lnk^\relo_2}$, and every $\th$-block is a module modulo $\eta$. 

\medskip

{\sc Claim 2.}
$\lnk^Q_2=\eta\red{E_2}$.

\medskip

Suppose $\lnk^\relo_2\ne\eta\red{E_2}$, then either $\lnk^\relo_2<\eta$ or there are $(c_1,d_1)\in\lnk^\relo_2-\eta$. In the former case there is a $\eta$-block $F$ that contains more than one $\lnk^\relo_2$-blocks. Let $c,d\in F$ be such that $(a,c),(b,d)\in\pr_{12}\relo$ and $(a,b)\not\in\lnk^\relo_1$. Since $E_1=C$ and $E_2\fac{\lnk^\relo_2}$ are modules, every $\lnk^\relo_2$-block contains a u-maximal element of $E_2=E^C_2$. Hence, $c,d$ can be chosen u-maximal, a contradiction with the condition $\eta\red{\umax(E^C_2)}\sse\lnk^\relo_2$. 

In the latter case the elements $c_1,d_1$ can be chosen such that there is $a_0\in E_1$ and $e_1,e'_1\in B'_3$ with $(a_0,c_1,e_1),(a_0,d_1,e'_1)\in\relo$. Let  $\vr=\Cgg{\zA_2}{(c_1,d_1)}\join\eta$. By the choice of $\eta$, $\vr\red{\umax(E_2)}\not\sse\lnk^\relo_2$. Let $D$ be a $\vr$-block that intersects more than one $\lnk^\relo_2$-blocks on $\umax(E_2)$. 
Since $\rel$ is chained, for any $\ov c_2,\ov d_2\in\umax(D\fac\eta)$ there are $\ov c_2=\ov c^*_1\zd\ov c^*_k=\ov d_2$ in $D\fac\eta$ and polynomials $\vc f{k-1}$ with $\{f_i(c_1),f_i(d_1)\}=\{\ov c^*_i,\ov c^*_{i+1}\}$. Note that the triples $(f_i(a_0),f_i(c_1),f_i(e_1)),\lb(f_i(a_0),f_i(d_1),f_i(e'_1))$ are related in $\xi$. 

Let $\tau=\xi\meet(\zz_1\tm\eta\tm\zo_3)$. We consider $\rel'\fac\tau$ as a subdirect product $\rel^\dg$ of $\rel'\fac\xi\tm B'_1\tm B'_2\fac\eta$. Let $\lnk^\eta_3$ be the link congruence of the third coordinate of $\rel^\dg$. By what is proved above $\umax(D\fac\eta)$ is contained in a $\lnk^\eta_3$-block. By Proposition~\ref{pro:umax-rectangular} for every as-component $C^\dg$ of $D\fac\eta$ there are $b_0\in B'_1$ and a $\xi$-block $\reli$ such that for any $\eta$-block $D'\in C^\dg$ there is $d\in D'$ and $e'_2\in B'_3$ with $(b_0,d,e'_2)\in\reli$. Pick an as-component $C^\dg\sse\umax(D\fac\eta)\sse\umax(B'_2\fac\eta)$ of $D\fac\eta$, a $\xi$-block $\reli$, $b_0\in B'_1$, $\ov c_2\in C^\dg$, $c_2\in\ov c_2$, and $e_2\in B'_3$ such that $(b_0,c_2,e_2)\in\reli$. 

It seems that $b_0$ is related to multiple $\eta$-blocks across $\lnk^\relo_2$ in contradiction with the choice of $\eta$. However, two obstacles remain. First, while $D$ intersects multiple $\lnk^\relo_2$-blocks, the connection between $C^\dg$ and $E_2$ is unknown, yielding no contradiction. It may even be the case that $|C^\dg|=1$. Second, $\reli$ is not necessarily equal to $\relo$, again avoiding a contradiction. 

To overcome these obstacles, choose $\th\in\Con(\zA_2)$ such that $\eta\prec\th\le\vr$. By Claim~1 $(\zz_1,\beta_1)$ and $(\eta,\th)$ cannot be separated and by Lemma~\ref{lem:type-equal} $\typ(\eta,\th)=\two$. Moreover, since $B'_1\tm\umax(B'_2)\sse\rel''$, by Lemma~\ref{lem:34-links}, $(\eta:\th)\ge\beta_2$. By the choice of $\eta$ there is a $\th$-block $F$ and $c_3,d_3\in F\cap\umax(E_2)$ with $(c_3,d_3)\not\in\lnk^\relo_2$, that is, $|(F\cap\umax(E_2))\fac\eta|>1$. 

There exist $b\in B'_1$ and $e_3\in B'_3$ such that $(b,c_3,e_3)\in\umax(\relo)\sse\umax(\relov)$. Take an asm-path $(b_0,c_2,e_2)=\ba_1\zd\ba_\ell=(b,c_3,e_3)$, where $\ba_i=(b_i,c^\dg_i,e^\dg_i)$ in $\relov$. For $i\in[\ell-1]$ let $f_i$ be a polynomial of $\rel$ given in Lemma~\ref{lem:central-mapping} and defined as follows: If $\ba_i\ba_{i+1}$ is semilattice (majority, affine), then
\[
f_i(x)=x\cdot\ba_{i+1},\quad f_i(x)=t_{\ba_i\ba_{i+1}}(x,\ba_{i+1}),\quad f_i(x)=h_{\ba_i\ba_{i+1}}(x,\ba_i,\ba_{i+1}),
\]
respectively. As is easily seen, $f_i(\ba_i)=\ba_{i+1}$, $i\in[\ell-1]$. We will apply Lemma~\ref{lem:central-mapping} to congruences $\eta\prec\gm$ and $\th$-blocks $c^\dg_i\fac\th$. Observe that, as $c^\dg_i c^\dg_{i+1}$ are thin edges, by Lemma~\ref{lem:thin-properties} so are $c^\dg_i\fac\th c^\dg_{i+1}\fac\th$, provided $c^\dg_i\fac\th\ne c^\dg_{i+1}\fac\th$.
Let $G\sse D$ be the $\th$-block containing $c_2$; note that by the observation before Claim~2 $G\fac\eta$ is a module, and therefore $G\fac\eta\sse C^\dg$. Composing all the polynomials $f_i$ by Lemma~\ref{lem:central-mapping} we obtain a polynomial $f$ such that $f(b_0,c_2,e_2)=(b,c_3,e_3)$ and, as $(\eta:\th)\ge\beta_2$, $f$ maps $G\fac\eta$ injectively into $F\fac\eta$. Since $\{b_0\}\tm G\fac\eta\sse(\pr_{12}\reli)\fac{\zz_1\tm\eta}$, we obtain $\{b\}\tm f(G\fac\eta)\sse \relo''\fac{\zz_1\tm\eta}$. It remains to show that $f(G\fac\eta)$ is sufficiently large.

As $\ov c_2\in\umax(B'_2\fac\eta)$ (recall that $D$ contains u-maximal elements of $B'_2$ and therefore every its u-maximal element belongs to $\umax(B'_2)$), there is $c'_2\in G\cap\umax(B'_2)$.
Using an asm-path from $c_3$ to $c'_2$ by Lemma~\ref{lem:central-mapping} we can show that there is also an injective mapping from $F\fac\eta$ to $G\fac\eta$. Therefore, $|G\fac\eta|=|F\fac\eta|$, and so $f(G\fac\eta)=F\fac\eta$. As we proved above, it means that for any $\eta$-block $F'\sse F$ there is $d\in F'$ with $(b,d)\in\relo''$, a contradiction with the assumption that $F$ contains elements from several $\lnk^\relo_2$-blocks.  Claim~2 is proved.

\medskip

Let $\eta^*=\Cgg{\zA_2}{\eta\red{E_2}}$. Replacing $\rel$ with $\rel\fac{\zz_1\tm\eta^*\tm\zz_3}$ we may assume that $\eta\red{E_2}$ is the equality relation. Then making use of Claim~2 assume that $\lnk^\relo_2=\zz_2\red{E_2}$. 

{\sc Claim 3.} $\lnk^\relo_1$ is either the equality relation or the full relation.

\medskip

By what has been proved so far $B'_1=E_1$ is a module.
Suppose that $\lnk^\relo_1\ne\zz\red{E_1}$. Then there are $a,b\in E_1$, $a\ne b$, and $c\in E_2$, $e_1,e_2\in B'_3$ such that $(a,c,e_1),(b,c,e_2)\in\relo$  and $\ba=(a,c,e_1)\in\umax(\relov)$. Since $\rel$ is chained, for any $b'\in E_1$ there exists a $\ov B$-preserving polynomial $f$ such that $f(a)=a$ and $f(b)=b'$. Then $((a,d,e'_1),(b',d,e'_2))\in\xi$, where $\cll ad{e'_1}=f\cll ac{e_1}, \cll{b'}d{e'_2}=f\cll bc{e_2}$. Since $a,b'\in\umax(B'_1)$, $d$ can be chosen from $\umax(B'_2)$. We argue that $(a,b')\in\lnk^\relo_1$, implying that $\lnk^\relo_1$ is the full relation. 

Let $\bb=(a,d,e'_1)$. As $\ba\in\umax(\relov)$ and $B'_1\tm\umax(B'_2)\sse\rel''$, there is an asm-path $\bb=\ba_1\zd\ba_\ell=\ba$ in $\relov$, where $\ba_i=(a,c_i,e^*_i)$. We now use polynomials similar to those constructed in Lemma~\ref{lem:central-mapping}. For $i\in[\ell-1]$ let $g_i$ be a polynomial of $\rel$ defined as follows: If $\ba_i\ba_{i+1}$ is semilattice (majority, affine), then
\[
g_i(x)=x\cdot\ba_{i+1},\quad g_i(x)=t_{\ba_i\ba_{i+1}}(x,\ba_{i+1}),\quad g_i(x)=h_{\ba_i\ba_{i+1}}(x,\ba_i,\ba_{i+1}),
\]
respectively. As is easily seen, $g_i(\ba_i)=\ba_{i+1}$. Also, if $(b',d,e'_2)=\ba'_1\zd\ba'_\ell$, where $\ba'_{i+1}=g_i(\ba'_i)$ for $i\in[\ell-1]$, then $(\ba_i,\ba'_i)\in\xi$ for all $i\in[\ell]$, because $(\ba_1,\ba'_1)\in\xi$ by construction. Moreover, $\ba'_i=(b',c_i,e'^*_i)$ for some $e'^*_i\in B'_3$. Therefore $(b',c)\in\relo''$, implying $(a,b')\in\lnk^\relo_1$. Claim~3 is proved.

\medskip

Recall that we assume that $\lnk^\relo_1$ is not the full relation, because otherwise $C$ is a subset of an $\lnk^\relo_1$-block, and we are done as observed in the beginning of the proof of Lemma~\ref{lem:etaQ}. Also we assume that $\eta\red{E_2}=\lnk^\relo_2$ to be the equality relation. This however does not imply that $\eta$ itself or even its restriction to $B'_2$ are the equality relations. Since $(\zz_1,\beta_1)$ and $(\eta,\th)$ cannot be separated for any $\th\in\Con(\zA_2)$ with $\eta\prec\th\le\beta_2$, by Claim~2 it remains to prove that $\eta\prec\gm$. By Claim~2 $E_2\fac\eta$ is a module. Take any $c,d,d'\in E^C_2=E_2$, $c\ne d$. We will show that $(c,d')\in\th=\Cgg{\zA_2}{(c,d)}\join\eta$, which, as $c,d,d'$ are arbitrary, proves that $\eta\prec\gm$. Since $E_2$ is a module, $\umax(E_2)=E_2$. Let $a,b,b'\in E_1$ and $e_1,e_2,e_3\in B'_3$ be such that $(a,c,e_1),(b,d,e_2),(b',d',e_3)\in\relo$. As $c\in\umax(E_2)\sse\umax(B^C_2)$ and $C\tm\umax(B'_2)\sse\rel''$, the triple $(a,c,e_1)$ can be chosen u-maximal in $\relov$.

Since $c\ne d$, it also holds that $a\ne b$. As $\rel$ is chained, there is a $\ov B$-preserving polynomial $f$ with $f(a)=a,f(b)=b'$. Let $f(c)=c'', f(d)=d'', f(e_1)=e'_1, f(e_2)=e'_2, f(e_3)=e'_3$. By the choice of $(a,c,e_1)$ we have $(a,c'',e'_1)\sqq^{asm}_{\relov}(a,c,e_1)$. As in the proof of Claim~3 using again the polynomials introduced in Lemma~\ref{lem:central-mapping} there is a $\ov B$-preserving polynomial $g$ such that $g(a)=a, g(c'')=c, g(e'_1)=e_1$, and $g(b')=b'$. Since $(a,c,e_1)\eqc\xi(g(b'),g(d''),g(e'_2))$, it holds that $(b',g(d''))\in\relo''$ and so $g(d'')= d'$. Therefore $c=gf(c)\eqc\th gf(d)= d'$.
\end{proof}

\begin{proof}[Proof of Lemma~\ref{lem:affine-link}]
Recall that unlike the two previous lemmas we cannot assume $\al=\zz_1$ at the moment. Let again $\rel''=\pr_{I'}\relov$ and $\relo''=\pr_{I'}\relo$. Note that if $|C\fac\al|=1$, the lemma is trivially true, because $E^C_2=\relo''[C\fac\al]$, so (1) holds. By Lemma~\ref{lem:type23} this happens in particular when $\typ(\zz_1,\beta_1)\in\{\four,\five\}$.

By Lemma~\ref{lem:etaQ} the statement of Lemma~\ref{lem:affine-link} holds for $\Bl(\relo)$. Suppose first that option (1) of the lemma holds. Then, as $\relo$ is polynomially closed, for any $(a,c,e)\in\umax(\relo)$, $a\in C$, and any $(b,d,e')\in\Bl(\relo)$ with $(a,c,e)\sqq^{as}(b,d,e')$ in $\Bl(\relo)$, we have $(b,d,e')\in\relo$. Thus, option (1) of the lemma holds for $\relo$, as well.

Suppose now that option (2) of the lemma holds for $\rela=\Bl(\relo)$. Since we are not using polynomial closeness anymore, we again assume that $\al=\zz_1$. The proof in this case is almost verbatim the argument in the beginning of the proof of Lemma~\ref{lem:etaQ}. The algebra $\rela'=\pr_{12}\rela$, and therefore $\relo''$ is the graph of a mapping, and we only need to verify that there are congruences $\eta,\gm$ that satisfy the desired conditions. Let $F_i=\pr_i\rela$, $F^C_2=\rela'[C]$, and let $\lnk^\rela_i$ be the link congruence of $E^\rela_i$ with respect to $\rela'$, $i=1,2$. Let $\eta^\rela$ be a maximal congruence such that $\eta^\rela\red{\umax(F^C_2)}\sse\lnk^\rela_2$ and $\eta^\rela\le\beta_2$, and such that it satisfies the conditions of item (2) of Lemma~\ref{lem:affine-link} for $\rela$. Let $\gm^\rela=\Cgg{\zA_2}{\umax(F^C_2)}\join\eta^\rela$. As $\umax(E_2)\sse\umax(F_2)$, $\umax(E^C_2)\sse \umax(F^C_2)$, and $\lnk^\relo_2=\lnk^\rela_2\cap E^\relo_2$, we have $\eta^\rela\red{\umax(E^C_2)}\sse\lnk^\relo_2$. Therefore $\eta\in\Con(\rel)$ maximal with condition $\eta\red{\umax(E^\relo_2)}\sse\lnk^\relo_2$ and $\eta\le\beta_2$ can be chosen such that $\eta^\rela\le\eta$. As option (2) holds for $\rela$, $\eta^\rela\prec\gm^\rela$. Let $\th\in\Con(\zA_2)$ be such that $\eta\prec\th\le\gm$. As $\eta^\rela\prec\gm^\rela$, it also holds that $\gm^\rela=\Cgg{\zA_2}{(c,d)}\join\eta^\rela$ for any $(c,d)\in(\umax(E^C_2))^2\cap(\th-\eta)$ (recall that $\umax(E^C_2)\sse\umax(F^C_2)$ and therefore $(c,d)\in\gm^\rela$). Thus, $\gm^\rela\le\th$.  By the definition of $\gm^\rela$ it holds that $E^\rela_2$ is in a single $\th$-block, hence, $\gm\le\th$, implying $\eta\prec\gm$. Also, $\gm^\rela\meet\eta=\eta^\rela$, because $\eta^\rela\le\gm^\rela\meet\eta$ and $\eta^\rela\prec\gm^\rela$, and $\gm^\rela\join\eta=\gm$, because $\umax(E^C_2)\sse \umax(F^C_2)$. Therefore, the intervals $(\eta^\rela,\gm^\rela)$ and $(\eta,\gm)$ are perspective. By Lemma~\ref{lem:e-related} these intervals cannot be separated. Since $(\al,\beta_1)$ and $(\eta^\rela,\gm^\rela)$ cannot be separated, it implies that $(\al,\beta_1)$ and $(\eta,\gm)$ cannot be separated as well.
\end{proof}

\section{Chaining and maximality}\label{sec:chaining}

In this section we show that the chaining condition holds in a
fairly broad range of circumstances. In particular, it is preserved 
under certain transformations of the relation.

\begin{lemma}\label{lem:full-chaining}
Let $\rel$ be a subdirect product of $\vc\zA n\in\cV$, $\beta_i=\zo_i$, 
$B_i=\zA_i$. Then $\rel$ is chained with respect to $\ov\beta, \ov B$.
\end{lemma}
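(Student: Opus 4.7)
The plan is to verify the two conditions (Q1) and (Q2) in the definition of ``chained'', both greatly simplified by the choice $\beta_i=\zo_i$, $B_i=\zA_i$. In this setting $\ov B = \zA_1\tms\zA_n$, so $\rel'=\rel$, every polynomial of $\rel$ is automatically $\ov B$-preserving, and the ceilings $\beta\le\ov\beta_I$ and $\dl\le\beta_j$ impose no restriction at all.

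Condition (Q1) will fall out immediately: $\cU_B$ is all of $\Polo(\rel)$, and its componentwise action on $\pr_I\rel$ already exhausts $\Polo(\pr_I\rel)$, since any polynomial of the projection lifts to one of $\rel$ by extending its parameter tuples. Hence the standard Mal'cev description of principal congruences identifies $\Cgg{\pr_I\rel,\cU_B}{\al\cup\{(a,b)\}}$ with the generated congruence $\beta$ itself, and $\umax(B')^2\sse B\tm B\sse\beta$ is trivial.

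The substantive case is (Q2). Fix $\al\prec\beta$ in $\Con(\pr_I\rel)$, $\gm\prec\dl$ in $\Con(\zA_j)$ with $(\al,\beta)$ separable from $(\gm,\dl)$, and $(a,b)\in\beta-\al$. Writing $\cU=\cU(\gm,\dl,\ov B)$ and $\chi=\Cgg{\pr_I\rel,\cU}{\al\cup\{(a,b)\}}$, the main step will be to show that $\chi$ is already a congruence of $\pr_I\rel$. The key observation is that $\cU$ is closed under composition with every polynomial on either side: if $g\in\cU$ and $h$ is an arbitrary polynomial, then $(h\cc g)(\dl)\sse h(\gm)\sse\gm$ because $\gm$, being a congruence, is preserved by $h$. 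Applying any unary polynomial $h$ to a chain witnessing $(x,y)\in\chi$ thus yields a chain of the same form ($\al$-steps stay in $\al$, while a step $(g(a),g(b))$ becomes $((h\cc g)(a),(h\cc g)(b))$ with $h\cc g\in\cU$), so $\chi$ is preserved by every unary polynomial of $\pr_I\rel$; the standard one-coordinate-at-a-time argument then upgrades this to preservation by every term operation. Since $\al\prec\beta$ and $\al\sse\chi\sse\beta$, necessarily $\chi=\al$ or $\chi=\beta$.

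It then suffices to exhibit one pair in $\chi\cap(\beta-\al)$. By separability there is $f_0\in\cU$ with $f_0(\beta)\not\sse\al$, and by (the proof of) Lemma~\ref{lem:min-set-separation} we may take $f_0$ idempotent with $f_0(\pr_I\rel)=U$ an $(\al,\beta)$-minimal set, so $f_0$ is the identity on $U$. Applying Lemma~\ref{lem:minimal-sets}(4) to $(a,b)$ and $U$ gives $f'$ with $f'(\pr_I\rel)=U$ and $(f'(a),f'(b))\in\beta\red U-\al\red U$; lifting $f'$ to a polynomial of $\rel$ and setting $h=f_0\cc f'$, closure of $\cU$ under composition yields $h\in\cU$, and because $f_0$ fixes $U$ pointwise we have $(h(a),h(b))=(f'(a),f'(b))\in\beta-\al$, manifestly in $\chi$. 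This forces $\chi=\beta$, so $\umax(B')^2\sse\chi$ for every $\beta$-block $B$, completing (Q2). The principal obstacle is recognising that the $\cU$-closure is already a congruence; once that is in hand, separability is used only to produce the single non-trivial pair distinguishing $\chi$ from $\al$.
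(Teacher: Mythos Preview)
Your argument is essentially the paper's proof repackaged: both reduce (Q1) to the observation that every polynomial is $\ov B$-preserving, and for (Q2) both use Lemma~\ref{lem:min-set-separation} to obtain an $f\in\cU$ whose image of $(a,b)$ lies in $\beta-\al$, and then post-compose with arbitrary polynomials (your $h\cc g\in\cU$ observation is exactly the paper's $f_i\cc f\in\cU$). The paper writes out the Mal'cev chain explicitly where you invoke the covering relation, but the content is the same.

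One point deserves more care. You assert $\al\sse\chi$, but the identity map is \emph{not} in $\cU(\gm,\dl,\ov B)$ (it does not collapse $\dl$ into $\gm$), so a raw pair $(e,e')\in\al$ is not automatically in $T(\al\cup\{(a,b)\},\cU)$. Your congruence argument shows only $\chi\le\beta$ and $\chi\not\le\al$, hence $\chi\join\al=\beta$; to get $\chi=\beta$ you still need to absorb the $\al$-steps. The paper's proof is equally terse on exactly this point (its displayed chain uses only images of $(f(a),f(b))$ and silently omits the $\al$-links in the Mal'cev description of $\Cgg{\zA_i}{\al\cup\{(f(a),f(b))\}}$), so you are not diverging from the paper here --- but if you want the write-up to be self-contained, this is the place to add a sentence. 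Also, you use closure of $\cU$ under post-composition by an arbitrary polynomial when forming $h=f_0\cc f'$, whereas the direction you actually proved is pre-composition; the missing direction is just as easy ($f_0\cc f'(\dl)\sse f_0(\dl)\sse\gm$), but state it.
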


\begin{proof}
Let $I,J\sse[n]$. By Lemma~\ref{lem:separation-separation}(2) 
we may assume that $I\cap J=\eps$.
Let $\al\le\beta$, $\al,\beta\in\Con(\pr_I\rel)$, and 
$\gm\prec\dl$, $\gm,\dl\in\Con(\pr_J\rel)$, be such that if $\al\prec\beta$,
then $(\al,\beta)$ can be separated from $(\gm,\dl)$. Consider first 
condition (Q1). Since any polynomial of $\rel$ is $\ov B$-preserving,
(Q1) follows from the definitions.

For condition (Q2) let $(\ba,\bb)\in\beta-\al$, and let $f$ be a unary idempotent polynomial 
such that $f(\pr_I\rel)$ is an $(\al,\beta)$-minimal set and 
$f(\ba)\not\eqc\al f(\bb)$. By Lemma~\ref{lem:min-set-separation}(1)
$f$ can be chosen such that $f(\dl)\sse\gm$. Since 
$\beta=\Cgg{\pr_I\rel}{\{(f(\ba),f(\bb))\}}\join\al$, for any $(\bc,\bd)\in\beta$
there are polynomials $\vc fk$ such that  for some $\vc \bc k$, $\vc{\bc'}k$ with $\{f_i(f(\ba)),f_i(f(\bb))\}=\{\bc_i,\bc'_i\}$, $\bc'_i\eqc\al \bc_{i+1}$ for $i\in[k]$, $f_1(f(\ba))=\bc$, and $\bc'_k\eqc\al \bd$. Then the polynomials $f_i\circ f$, $i\in[k]$, witness that
$(\bc,\bd)\in\Cgg{\zA_i,\al,\cU(\gm,\dl,\ov B)}{\{(\ba,\bb)\}}$, and, as $(\bc,\bd)$ is arbitrary, that $\beta=\Cgg{\pr_I\rel,\al,\cU(\gm,\dl,\ov B)}{\{(\ba,\bb)\}}$.
\end{proof}

\begin{lemma}\label{lem:S7}
Let $\rel$ be a subdirect product of $\vc\zA n\in\cV$, $\beta_i\in\Con(\zA_i)$
and $B_i$ a $\beta_i$-block, $i\in[n]$, such that $\rel$ is chained
with respect to $\ov\beta,\ov B$. Let $\relov=\rel\cap\ov B$
and $B'_i=\pr_i\relov$. Fix $i\in[n]$, $\beta'_i\prec\beta_i$,
and let $D_i$ be a $\beta'_i$-block that is as-maximal in 
$B'_i\fac{\beta'_i}$. Let also $\beta'_j=\beta_j$ 
and $D_j=B_j$ for $j\ne i$. Then
$\rel$ is chained with respect to $\ov\beta',\ov D$.
\end{lemma}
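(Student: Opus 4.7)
The approach is to verify conditions (Q1) and (Q2) of the chaining definition for $(\ov\beta',\ov D)$ by transporting the corresponding conditions for $(\ov\beta,\ov B)$, which hold by hypothesis. First note that since $D_i\sse B_i$ and $D_j=B_j$ for $j\ne i$, we have $\ov D\sse\ov B$, $\ov\beta'_I\le\ov\beta_I$, and $\rel'':=\rel\cap\ov D\sse\rel'$. So the data of the new (Q1) or (Q2) --- intervals $\al\le\beta\le\ov\beta'_I$ and separable pairs $(\gm,\dl)$ with $\dl\le\beta'_j$ --- are also valid data for the old (Q1)/(Q2), supplying $\cU_B$-chains (resp.\ $\cU(\gm,\dl,\ov B)$-chains) that we must refine into $\cU_D$-chains (resp.\ $\cU(\gm,\dl,\ov D)$-chains).

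The main technical device is an idempotent $\ov B$-preserving polynomial $r$ of $\rel$ satisfying $r(\ov B)\sse\ov D$, fixing the finitely many u-maximal tuples $a,b,a',b'$ participating in the chaining witness, and (in the (Q2) case) additionally satisfying $r(\dl)\sse\gm$. With such an $r$ in hand, for every $f\in\cU_B$ the composition $r\circ f$ lies in $\cU_D$, because $f(\ov D)\sse f(\ov B)\sse\ov B$ and then $r(\ov B)\sse\ov D$; applying $r$ term-by-term to the old chain $a'=c_0,c_1,\ldots,c_k=b'$ yields a new chain $a'=r(c_0),r(c_1),\ldots,r(c_k)=b'$ whose consecutive pairs are of the form $((r\circ f_m)(a),(r\circ f_m)(b))$, witnessing that $(a',b')\in\Cgg{\pr_I\rel,\cU_D}{\al\cup\{(a,b)\}}$ ($\al$-steps remain $\al$-steps after applying $r$ since $\al$ is preserved by polynomials). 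Applicability of the old (Q1) to the chosen endpoints additionally requires $a',b'$ to be u-maximal in $\pr_I\rel'$, which follows from Lemma~\ref{lem:u-max-congruence} once we exhibit a u-maximal element of $\pr_I\rel'$ inside $\pr_I\rel''$; this uses Lemma~\ref{lem:to-max} together with the as-maximality of $D_i$ in $B'_i\fac{\beta'_i}$.

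The polynomial $r$ is constructed as a composition of two ingredients. First, a $(\beta'_i,\beta_i)$-collapsing polynomial supplied by Theorem~\ref{the:collapsing} applied to the prime interval $\beta'_i\prec\beta_i$ in $\zA_i$, with the fixed-point control of conditions~(d)--(f) of that theorem (so the prescribed tuples are preserved and the minimal-set image meets $D_i$), and, in the (Q2) case, with the collapsing property of condition~(c) (so that $r(\dl)\sse\gm$ is automatic). Second, a semilattice- or affine-type absorption operation, built from Lemmas~\ref{lem:good-operation},~\ref{lem:op-s-on-affine}, and~\ref{lem:thin-properties} by lifting thin edges of $B'_i\fac{\beta'_i}$ to $\rel$, which pushes the minimal-set image into $D_i$. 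The as-maximality of $D_i$ in $B'_i\fac{\beta'_i}$ is precisely the hypothesis that ensures enough thin edges exist to perform this routing.

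The main obstacle is the construction of $r$ with all four required properties simultaneously --- $\ov B$-preservation, retraction into $\ov D$, fixation of the prescribed u-maximal tuples, and (for (Q2)) collapsing of the separable pair. Each property is available individually from the existing machinery, but ensuring compatibility is delicate: the absorption step must not disturb the collapsing property of the initial $(\beta'_i,\beta_i)$-collapsing polynomial, and both steps must fix the prescribed tuples. The as-maximality hypothesis on $D_i$ is the weakest reachability condition in $B'_i\fac{\beta'_i}$ under which this combined construction goes through, and this is the place where the lemma's hypothesis is essentially used.
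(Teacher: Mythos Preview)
Your approach has a fundamental gap in the construction of $r$. A $(\beta'_i,\beta_i)$-collapsing polynomial, as defined just before Theorem~\ref{the:collapsing}, satisfies condition~(b): $f(\zA_i)$ is a $(\beta'_i,\beta_i)$-minimal set, and in particular $f(\beta_i)\not\sse\beta'_i$. This is the \emph{opposite} of what you need: to obtain $r(\ov B)\sse\ov D$ you require $r(B_i)\sse D_i$, i.e.\ $r$ must collapse $\beta_i\red{B_i}$ into $\beta'_i$, not preserve it. The word ``collapsing'' refers to collapsing the intervals that are separable from $(\beta'_i,\beta_i)$, never $(\beta'_i,\beta_i)$ itself. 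In addition, Theorem~\ref{the:collapsing} requires $B'_i\fac{\beta'_i}$ to contain a nontrivial as-component, which is not among the hypotheses of the lemma (only that $D_i$ is as-maximal). Your second ingredient, an absorption step along thin edges, cannot repair this: once you force the $i$th coordinate into $D_i$ you have destroyed $f(\beta_i)\not\sse\beta'_i$, and with it the fixed-point controls (d)--(f) you were relying on.

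Your handling of (Q2) has an independent error. You assert that $r(\dl)\sse\gm$ is ``automatic'' from condition~(c) of the collapsing polynomial, but condition~(c) only collapses intervals $(\gm,\dl)$ that are separable from $(\beta'_i,\beta_i)$. The hypothesis of (Q2) for $(\ov\beta',\ov D)$ is that $(\al,\beta)$, an interval in $\Con(\pr_I\rel)$ with $\beta\le\ov\beta'_I$, is separable from $(\gm,\dl)$; there is no a~priori relationship between $(\al,\beta)$ and $(\beta'_i,\beta_i)$.

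The paper's argument does not build a single retraction. It first disposes of a degenerate case via the Congruence Lemma~\ref{lem:affine-link} (when the relevant binary projection $\relo'$ is not linked). In the linked case it modifies each polynomial $f_m$ of the old $\cU_{\ov B}$-chain individually, replacing $f_m$ by $\maj(\ba,f_m(x),\ba)$ and $\maj(\ba,\bb,f_m(x))$ for carefully chosen $\ba,\bb\in\rel''$; the quasi-majority $\maj$ forces the $i$th coordinate into the as-component $C$ of $D_i$ while keeping the chain structure. A further step-by-step adjustment along thin edges of $C$ (using the operations of Lemma~\ref{lem:op-s-on-affine}) then lands the $i$th coordinate exactly at $c\in D_i$. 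The quasi-majority device is the key idea your outline is missing.
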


\begin{proof}
Let $\rel_I=\pr_I\rel, \rel'_I=\pr_I\rel'$, $\rel''=\rel\cap\ov D$, $\rel''_I=\pr_I\rel''$, and $D'_j=\pr_j\rel''$, $j\in[n]$. Take $I$, $J$ 
from the definition of chaining. Let $I=[\ell]$, for condition (Q2), we assume $J=\{n-m\zd n\}$. 
Let $\al,\beta\in\Con(\rel_I)$, $\gm,\dl\in\Con(\pr_J\rel)$ be such that 
$\al\le\beta\le\ov\beta'_I$, $\gm\prec\dl\le\ov\beta'_J$ (we assume 
$\al\prec\beta$ when considering condition (Q2)). Clearly, we may 
assume $\al=\ov\zz_I$, $\gm=\ov\zz_J$, and $\beta'_i=\zz_i$. Note 
that by Lemma~\ref{lem:separation-separation}(2) 
replacing $\rel$
with the $n+1$-ary relation $\{(\ba,\ba[i])\mid \ba\in\rel\}$ we 
may assume that $i\not\in I\cup J$. Without loss of generality assume 
$i=\ell+1$. By the assumption $\beta'_{\ell+1}=\zz_{\ell+1}$, the classes of $\beta'_{\ell+1}$ are
just elements of $\zA_{\ell+1}$, so let $D'_{\ell+1}$ be denoted by $c$. Let $C$ be the 
as-component of $B'_{\ell+1}$ containing $c$. 

To prove the lemma let $\cU^*\in\{\cU_{\ov D},\cU(\gm,\dl,\ov D)\}$. We assume that $\beta$ is a principal congruence of $\rel_I$ such that $\ov\zz_I<\beta\le\ov\beta_I=\ov\beta'_I$. Let $\ba,\bb\in\rel''_I$ be any elements such that $\Cgg{\rel_I}{\{(\ba,\bb)\}}=\beta$. We need to prove that for any $\beta$-block $B$ such that $B\cap\umax(\rel''_I)\ne\eps$, it holds that $(B\cap\umax(\rel''_I))^2\sse\Cgg{\rel_I,\ov\zz_I,\cU^*}{\ba,\bb}$.
Note that it would be true if $\cU^*$ were one of $\cU_{\ov B},\cU(\gm,\dl,\ov B)$ by the assumption 
that $\rel$ is chained with respect to $\ov\beta,\ov B$, which is witnessed by polynomials from $\cU_{\ov B}$ and $\cU(\gm,\dl,\ov B)$. We need to 
change these polynomials so that they work for $\ov D$. Let $\relo'=\pr_{[\ell+1]}\relov$, $B^C_I=\relo'^{-1}[C]$, and $\relo^C=\relo'\cap(B^C_I\tm C)$.
We divide the proof into two cases, depending on whether or not 
$\relo^C$ is linked. 

First, we consider 
the case when $\relo^C$ is not linked, this case is relatively easy. Let $\lnk^C_1,\lnk^C_2$ be defined for $\relo^C$ in the same way as link 
congruences are defined for a subdirect product. Note that $\relo^C$ is not necessarily a subalgebra, and $\lnk^C_1,\lnk^C_2$ are not necessarily congruences.

\medskip

{\sc Claim 1.}
Let $\relo^C$ be not linked and let $\lnk^C_1,\lnk^C_2$ be as above.
Then 
$\lnk^C_2=\zz_{\ell+1}\red C$ and either $\beta\red{B^C_I}\sse\lnk^C_1$ 
or $(\beta\cap\lnk^C_1)\red{B^C_I}=\th\red{B^C_I}$ for 
some $\th\in\Con(\rel_I)$, $\ov\zz_I\le\th<\beta$.

\medskip

The relation $\rel'$ is a (improper) subalgebra of $\relov$ and is 
polynomially closed in $\rel$ by Lemma~\ref{lem:poly-closed}(2), because $\rel'$ is a $\ov\beta$-block.
By the Congruence Lemma~\ref{lem:affine-link} for $I_0=\{\ell+1\}$, $\al=\zz_{\ell+1}$, and $I=[\ell]$,
if $\relo^C$ is not linked then 
$\relo^C$ is the graph of a mapping $\vf:B^C_I\to C$. This means 
$\lnk^C_2=\zz_{\ell+1}\red C$ and $\lnk^C_1$ is the restriction of a congruence 
$\eta$ of $\rel_I$ defined in the Congruence Lemma~\ref{lem:affine-link} onto $B^C_I$. If $\beta\le\eta$, we obtain the 
first option of the conclusion of the claim, otherwise $\th=\beta\meet\eta$ 
and we have the second option.

\medskip

Note that if $\beta\red{B^C_I}\le\lnk^C_1$ then any 
$\ov B$-preserving polynomial
that maps a pair of $\beta$-related elements from $\rel''_I$ on a pair
from $\rel''_I$ is also $\ov D$-preserving, because 
$\lnk_2=\zz_{\ell_1}\red C$. The result follows, as $\umax(\rel''_I)\sse\umax(\rel_I)$ by Lemma~\ref{lem:u-max-congruence}. 
If $(\beta\meet\lnk_1)\red{B^C_I}=\th\red{B^C_I}$ for some $\th<\beta$, 
there is nothing to prove, because no pair $(\ba,\bb)\in\beta\cap (\rel''_I)^2$
generates $\beta$. Therefore we may assume $\relo^C$ is linked.

We start by choosing a $\beta$-block containing elements u-maximal in $\rel''_I$ and studying some of its properties. Observe that since $c$ is as-maximal 
in $B'_2$, the set $\rel''_I$ also contains as-maximal elements of $\relo'_I$. 
Therefore by Lemma~\ref{lem:u-max-congruence} 
$\umax(\rel''_I)\sse\umax(\rel'_I)$. Let $E$ be a $\beta$-block such that 
$E\cap\umax(\rel''_I)\ne\eps$, and let $E'=E\cap \rel'_I$, $E^C=E\cap B^C_I$, and $E^*=E\cap \rel''_I$.
By the Maximality Lemma~\ref{lem:to-max}(4) 
$\amax(E^C)$ is a union of as-components 
of $E'$. Indeed, let $\ba_0\in E^C$ and let $\ba\in\relov$ be such that 
$\pr_I\ba=\ba_0$ and $\ba[\ell+1]\in C$; let also $\bb_0\in E'$ with $\ba_0\sqq^{as}\bb_0$ in
$E'$. Then by the Maximality Lemma~\ref{lem:to-max}(4) 
there is 
$\bb\in\relov$ such that $\pr_I\bb=\bb_0$ and $\ba\sqq^{as}\bb$ in $\relov$.
In particular, $\ba[\ell+1]\sqq^{as}\bb[\ell+1]$ implying $\bb[\ell+1]\in C$.
Also, by Proposition\ref{pro:umax-rectangular} 
applied to the subalgebra generated by $\relo^C$, since $\relo^C$ is linked
and $\umax(E^C)\sse\umax(B^C_I)$, we have 
$\umax(E^C)\tm C\sse\relo^C$, and therefore 
$\umax(E^C)=\umax(E^*)\sse\umax(E')$. The last inclusion here is because $E^C$ contains 
some as-maximal elements of $E'$. In particular, $\amax(E^*)$ is a union 
of as-components of $E'$. 

First we prove that there are polynomials with images from an 
as-component of $E^*$ for both
conditions (Q1),(Q2) for $\ov\beta',\ov D$.

\medskip

{\sc Claim 2.}
For any $\ba,\bb\in E^*$, $\ba',\bb'\in \rel''_I$ such that $\ba,\bb$ belong to the same as-component 
of $E^*$ and $\Cgg{\rel_I}{\{(\ba',\bb')\}}=\beta$, there exists a polynomial 
$f$ from $\cU_{\ov D}$ with 
$f(\{\ba',\bb'\})=\{\ba,\bb\}$. Moreover, if $\al\prec\beta$ polynomial $f$ can 
be chosen from $\cU(\gm,\dl,\ov D)$. 

\medskip

Consider the relation $\rela$, a subdirect product of 
$\pr_I\rel\tm\pr_I\rel\tm\zA_{\ell+1}\tms\zA_n$, produced from $(\ba',\bb',\ba_0)$, where 
$\ba_0$ is a fixed tuple from $\pr_{\{\ell+1\zd n\}}\rel''$, as follows: 
$$
\rela=\{(f(\ba'),f(\bb'),f(\ba_0))\mid \text{$f$ is a unary polynomial of $\rel$}\},
$$ 
and for (Q2) we also assume that polynomials in the definition of $\rela$ 
satisfy $f(\dl)\sse\gm$.
By Lemma~\ref{lem:good-polys}(1,2) 
$\rela$ is a subalgebra, and, 
in particular it contains all the tuples of the form $(\pr_I\bb,\pr_I\bb,\bb[\ell+1]\zd\bb[n])$ for 
$\bb\in\rel$. Let $\rela'=\rela\cap\ov B$, and $\rela''=\rela\cap\ov D$. 
Every tuple from $\rela'$ or from $\rela''$ corresponds to a $\ov B$- or 
$\ov D$-preserving polynomial. Also, observe that $\rela''=\{\bb\in\rela'\mid \bb[2\ell+1]=c\}$. Therefore it suffices to prove that 
$(\ba,\bb)\in\pr_{[2\ell]}\rela''$ or equivalently, that $(\ba,\bb,c)\in\relp=\pr_{[2\ell+1]}\rela'$. 

Let $F$ be the as-component of $E^*$ containing 
$\ba,\bb$; as observed above $F$ is also an as-component of $E'$. 
As $\rel$ is chained with respect of $\ov\beta,\ov B$, by Lemma~\ref{lem:good-polys}(3) 
it holds that 
$F^2\sse\pr_{[2\ell]}\relp$. Also, $(\be,\be)\in\pr_{[2\ell]}\rela''$ and $(\be,\be,d)\in\relp$ for any $\be\in F$ and $d\in C$, since $F\tm C\sse\relo^C$. Therefore considering $\relp$ as a subdirect product of $\pr_{[2\ell]}\relp$ and $B'_{\ell+1}$, the as-component $C$ belongs to a block of the link congruence. 
As $F^2\sse\pr_{[2\ell]}\relp$, every pair from $F^2$ is as-maximal in $\pr_{[2\ell]}\relp$. Moreover, by the Maximality Lemma~\ref{lem:to-max}(4) 
$\relp\cap(F^2\tm C)$ is subdirect on $F^2$ and  $C$. Hence $F^2$ also belongs to a block of the link congruence. By Proposition~\ref{pro:linkage-rectangularity} 
$F^2\tm C\sse\relp$, in particular $(\ba,\bb,c)\in\relp$, as required.
Claim~2 is proved.

\medskip

Now we extend the result of Claim~2 to pairs from $\umax(E^*)$. We prove 
the result in two steps. First, we show that for any $\ba',\bb'\in \rel''_I$ 
such that $\Cgg{\rel_I}{\{(\ba',\bb')\}}=\beta$ and
any $\ba,\bb\in\umax(E^*)$ there is a sequence of $\ov B$-preserving polynomials 
$\vc fk$ such that $f_1(\{\ba',\bb'\})\zd$ $ f_k(\{\ba',\bb'\})\sse E^*$ form a chain 
connecting $\ba$ and $\bb$, and $f_i(c)\in C$ for $i\in[k]$. Then we prove 
that $\vc fk$ can be chosen in such a way that 
$f_1(\{\ba',\bb'\})\zd$ $ f_k(\{\ba',\bb'\})\sse E^*$ and $f_1(c)=\dots=f_k(c)=c$.
Clearly, it suffices to prove in the case when $\bb$ is as-maximal in $E^*$, since we can link $\ba$ and $\bb$ to an as-maximal element $\bb''$ and then concatenate the two chains.
We will also observe that in both cases the polynomials $\vc fk$ can be
chosen from $\cU(\gm,\dl,\ov B)$ when necessary.

By the assumption there are $\ba=\ba_1,\ba_2\zd \ba_k=\bb$, $\vc \ba k\in E'$ and
polynomials $\vc f{k-1}$ from $\cU_{\ov B}$ (or from $\cU(\gm,\dl,\ov B)$  
when $\al\prec\beta$) such that $f_i(\{\ba',\bb'\})=\{\ba_i,\ba_{i+1}\}$,
and also $f_i(c)\in B'_{\ell+1}$. We need to show that $\vc \ba{k-1}$ and $\vc f{k-1}$
can be chosen such that $f_i(c)\in C$. Choose $\ba'',\bb''\in\rel''$
such that $\pr_I\ba''=\ba,\pr_I\bb''=\bb$ and $\ba''[\ell+1]=\bb''[\ell+1]=c$. Now let 
$g_i(x)=\maj(\ba'',f_i(x),\ba'')$ and $h_i(x)=\maj(\ba'',\bb'',f_i(x))$, where $\maj$ is a quasi-majority operation, see Theorem~\ref{the:pseudo-majority}. 
By Lemma~\ref{lem:good-polys}(2) 
$g_i,h_i$ are from $\cU_{\ov B}$ and 
from $\cU(\gm,\dl,\ov B)$ whenever $\vc fk$ are from $\cU(\gm,\dl,\ov B)$. 
Also, for each of them either $\{\bb_i,\bb_{i+1}\}=g_i(\{\ba',\bb'\})$ 
($\{\bc_i,\bc_{i+1}\}=h_i(\{\ba',\bb'\})$) is from 
$T(\{(\ba',\bb')\},\cU^*)$, $\cU^*\in\{\cU_{\ov B},\cU(\gm,\dl,\ov B)\}$, 
or $g_i(\ba')=g_i(\bb')$ (respectively, $h_i(\ba')=h_i(\bb')$). The polynomials
$g_i,h_i$ satisfying the first option form a sequence of 
pairs connecting $\ba$ with $\maj(\ba,\bb,\ba)$ --- by pairs
of the form $\{\bb_i,\bb_{i+1}\}$, --- and $\maj(\ba,\bb,\ba)$ with $\maj(\ba,\bb,\bb)$
--- by pairs of the form $\{\bc_i,\bc_{i+1}\}$. Also, by 
Theorem~\ref{the:pseudo-majority} 
$\maj(\ba,\bb,\bb)$ belongs to the 
as-component of $E^*$ (and therefore of $E'$) containing $\bb$.
Therefore by Claim~2 this sequence of polynomials and pairs can be 
continued to connect $\maj(\ba,\bb,\bb)$ to $\bb$. Finally, by the same theorem
$g_i(c)=\maj(c,f_i(c),c)\in C$ and $h_i(c)=\maj(c,c,f_i(c))\in C$.

For the second step we assume that $\ba$ and $\bb$ are connected with
pairs $\{\ba_i,\ba_{i+1}\}$, $i\in[k-1]$ witnessed by polynomials 
$f_i$ from $\cU_{\ov B}$ (or from $\cU(\gm,\dl,\ov B)$ when needed) 
such that $c_i=f_i(c)\in C$. We need to show that $f_i$ can be chosen 
such that $f_i(c)=c$. Suppose that $c_i\ne c$ for some $i\in[k-1]$.
Since $c_i$ and $c$ belong to the same as-component, there is an
as-path $c_i=d_1\zd d_\ell=c$ in $C$. Suppose that there is 
a sequence of pairs $\{\bb_j,\bb_{j+1}\}=g_j(\{\ba',\bb'\})$, $\bb_1=\ba$ and 
$\bb_k=\bb$, for 
some polynomials $g_j\in\cU_{\ov B}$ (or from $\cU(\gm,\dl,\ov B)$ 
when needed), $j\in[k-1]$, such that $g_j(c)=c$ whenever $f_j(c)=c$, 
and $g_i(c)=d_t$. We will show that there are also pairs $\{\bb'_j,\bb'_{j+1}\}=g'_j(\{\ba',\bb'\})$ 
for some polynomials $g'_j\in\cU_{\ov B}$ (or from $\cU(\gm,\dl,\ov B)$ 
when needed) such that $\bb'_1=\ba$ and $\bb'_k$ is in 
the as-component containing $\bb$, $g'_i(c)=d_{t+1}$
and $g'_j(c)=c$ whenever $g_j(c)=c$.

Note that, since $\ba,\bb\in E^*$ and $E^*$ is a subalgebra of $\rel_I$, the subalgebra $\Sgg{\rel_I}{\ba,\bb}$ is a subalgebra of $E^*$. As is easily seen, it suffices to find a ternary term operation $p$
such that $p(\ba,\ba,\bb)$ belongs to the as-component of $E^*$ containing $\bb$,
and $p(d_{t+1},d_t,d_t)=d_{t+1}$. Indeed, if such a term operation exists, 
then we set $g'_j(x)=p(\ba'',\ba'',g_j(x))$, where $\ba''$ is as in the first step 
above, for $j\in[k-1]-\{i\}$, and 
$\{\bb'_j,\bb'_{j+1}\}=g'_j(\{\ba',\bb'\})$. We have $g'_1(\ba')=p(\ba,\ba,g_1(\ba'))=\ba$ (or $g'_1(\bb')=p(\ba,\ba,g_1(\bb'))=\ba$ if $\ba=g_1(\bb')$)
and $g'_j(c)=p(c,c,g_j(c))=c$ whenever $g_j(c)=c$. Finally, since
$g'_{k-1}(\bb)=p(\ba,\ba,\bb)$ belongs to the as-component containing $\bb$,
we can use Claim~2 as before to connect $p(\ba,\ba,\bb)$ to $\bb$. For $g'_i$ we set
$g'_i(x)=p(\ba^\dg,\ba^\ddg,g_i(x))$ where $\ba^\dg,\ba^\ddg\in\rel''$ are such that
$\pr_I\ba^\dg=\pr_I\ba^\ddg=\ba$ and $\ba^\dg[\ell+1]=d_{t+1},\ba^\ddg[\ell+1]=d_t$. Note that 
such $\ba^\dg,\ba^\ddg$ exist, because $\umax(E^C)\tm C\sse\relo'$. It follows 
from the assumption about $p$ that $g'_i$ is as required.

If $d_t\le d_{t+1}$, then $p(x,y,z)=z\cdot x$ fits the requirements. 
If $d_td_{t+1}$ is an affine edge, consider the relation 
$\relo^\dg\sse\rel_I\tm\zA_{\ell+1}$ generated by $\{(\ba,d_{t+1}),(\ba,d_t),(\bb,d_t)\}$.
Let $\relp=\Sgg{\rel_I}{\ba,\bb}$ and $\zC=\Sgg{\zA_{\ell+1}}{d_t,d_{t+1}}$; then 
$\relp\tm\{d_t\},\{\ba\}\tm\zC\sse\relo^\dg$. By 
Lemma~\ref{lem:as-rectangularity}, 
as $d_td_{t+1}$ is a thin affine 
edge, $\umax(\relp)\tm\{d_{t+1}\}\sse\relo^\dg$.
There is $\bb'$ with $\bb\sqq^{as}\bb'$ in $\relp$ such that $\bb'\in\umax(\relp)$.
Therefore there is a term operation $p$ with $p(\ba,\ba,\bb)=\bb'$  
and $p(d_{t+1},d_t,d_t)=d_{t+1}$, as required.
\end{proof}

\section*{Declarations}

\subsection*{Data availability}
Data sharing not applicable to this article as datasets were neither generated nor analyzed.

\subsection*{Compliance with ethical standards}
The author is a member of the Editorial Board of Algebra Universalis. Apart from this the author declares that he has no conflict of interest.

\bibliographystyle{plain}

\begin{thebibliography}{10}

\bibitem{Bulatov02:maltsev-3-element}
A.~Bulatov.
\newblock Three-element {M}al'tsev algebras.
\newblock {\em Acta {S}ci. {M}ath ({S}zeged)}, 71(3-4):469--500, 2002.

\bibitem{Bulatov04:graph}
Andrei~A. Bulatov.
\newblock A graph of a relational structure and constraint satisfaction
  problems.
\newblock In {\em {LICS}}, pages 448--457, 2004.

\bibitem{Bulatov16:connectivity}
Andrei~A. Bulatov.
\newblock Graphs of finite algebras, edges, and connectivity.
\newblock available at {\small
  https://www.cs.sfu.ca/$\sim$abulatov/papers/graph-connect-1.pdf}, 2016.

\bibitem{Bulatov16:restricted}
Andrei~A. Bulatov.
\newblock Graphs of relational structures: restricted types.
\newblock In {\em {LICS}}, 2016.

\bibitem{Bulatov17:semilattice}
Andrei~A. Bulatov.
\newblock Constraint satisfaction problems over semilattice block Mal'tsev
  algebras.
\newblock {\em CoRR}, abs/1701.02623, 2017.

\bibitem{Bulatov17:dichotomy}
Andrei~A. Bulatov.
\newblock A dichotomy theorem for nonuniform CSPs.
\newblock In {\em FOCS} , pages 319--330, 2017.

\bibitem{Bulatov17:dichotomy-corr}
Andrei~A. Bulatov.
\newblock A dichotomy theorem for nonuniform CSPs.
\newblock {\em CoRR}, abs/1703.03021, 2017.

\bibitem{Bulatov19:semilattice}
Andrei~A. Bulatov.
\newblock Constraint satisfaction problems over semilattice block Mal'tsev
  algebras.
\newblock {\em Inf. Comput.}, 268, 2019.

\bibitem{Bulatov20:restricted}
Andrei~A. Bulatov.
\newblock Graphs of relational structures: restricted types.
\newblock {\em CoRR}, abs/2006.11713, 2020.

\bibitem{Bulatov20:graph}
Andrei~A. Bulatov.
\newblock Local structure of idempotent algebras I.
\newblock {\em CoRR}, abs/2006.09599, 2020.

\bibitem{Bulatov20:maximal}
Andrei~A. Bulatov.
\newblock Local structure of idempotent algebras II.
\newblock {\em CoRR}, abs/2006.10239, 2020.

\bibitem{Bulatov05:classifying}
Andrei~A. Bulatov, Peter Jeavons, and Andrei~A. Krokhin.
\newblock Classifying the complexity of constraints using finite algebras.
\newblock {\em {SIAM} J. Comput.}, 34(3):720--742, 2005.

\bibitem{Bulatov08:recent}
Andrei~A. Bulatov and Matthew Valeriote.
\newblock Recent results on the algebraic approach to the {CSP}.
\newblock In {\em Complexity of Constraints - An Overview of Current Research
  Themes [Result of a Dagstuhl Seminar].}, pages 68--92, 2008.

\bibitem{Burris81:universal}
Stanley~Burris and H.P. Sankappanavar.
\newblock {\em A course in universal algebra}, volume~78 of {\em Graduate Texts
  in Mathematics}.
\newblock Springer-Verlag, 1981.

\bibitem{Feder93:monotone}
Tomas~Feder and Moshe~Vardi.
\newblock Monotone monadic {SNP} and constraint satisfaction.
\newblock In {\em STOC}, pages 612--622, 1993.

\bibitem{Feder98:monotone}
Tomas~Feder and Moshe~Vardi.
\newblock The computational structure of monotone monadic {SNP} and constraint
  satisfaction: A study through datalog and group theory.
\newblock {\em {SIAM} Journal of Computing}, 28:57--104, 1998.

\bibitem{Freese87:commutator}
Ralph~Freese and Ralph~McKenzie.
\newblock {\em Commutator theory for congruence modular varieties}, volume 125
  of {\em London Math.\ Soc.\ Lec.\ Notes}.
\newblock London, 1987.

\bibitem{Hobby88:structure}
David~Hobby and Ralph~McKenzie.
\newblock {\em The Structure of Finite Algebras}, volume~76 of {\em
  Contemporary Mathematics}.
\newblock American Mathematical Society, Providence, R.I., 1988.

\bibitem{Jeavons97:closure}
Peter Jeavons, David~A. Cohen, and Marc Gyssens.
\newblock Closure properties of constraints.
\newblock {\em J. {ACM}}, 44(4):527--548, 1997.

\bibitem{Kearnes96:idempotent}
Keith~Kearnes.
\newblock Idempotent simple algebras.
\newblock In {\em Logic and algebra}, volume 180 of {\em
  Lecture Notes in Pure and Appl. Math.}, pages 529--572,
  1996.

\bibitem{Kearnes16:finite}
\'Agnes~Szendrei Keith~Kearnes and Ross Willard.
\newblock A finite basis theorem for difference-term varieties with a finite
  residual bound.
\newblock {\em Trans. Amer. Math. Soc.}, 368:2115–2143, 2016.

\bibitem{Kiss97:easy}
Emil Kiss.
\newblock An easy way to minimal algebras. 
\newblock {\em Int. J. of Algebra and Comput.}, 7:55-75, 1997.

\bibitem{Mckenzie87:algebras}
Ralph~McKenzie, George~McNulty, and Walter~Taylor.
\newblock {\em Algebras, Lattices, Varieties, I}.
\newblock Wadsworth--Brooks/Cole, Monterey, California, 1987.

\bibitem{Szendrei87:simple}
\'Agnes~Szendrei.
\newblock Idempotent algebras with restrictions on subalgebras.
\newblock {\em Acta Sci. Math. (Szeged)}, 51:251--268, 1987. 

\bibitem{Valeriote90:finite}
Matthew~Valeriote.
\newblock Finite Simple Abelian Algebras are Strictly Simple. 
\newblock {\em Proceedings of the American Mathematical Society},
108(1):49--57, 1990.

\bibitem{Willard19:centralizer}
Ross Willard.
\newblock Personal communication, 2019.

\bibitem{Zhuk17:proof}
Dmitriy Zhuk.
\newblock A proof of {CSP} dichotomy conjecture.
\newblock In {\em FOCS}, pages  331--342, 2017.

\bibitem{Zhuk17:proof-corr}
Dmitriy Zhuk.
\newblock The proof of {CSP} dichotomy conjecture.
\newblock {\em CoRR}, abs/1704.01914, 2017.

\end{thebibliography}

\end{document}